\newtheorem{theorem}{Theorem}[section]
\newtheorem{assumption}[theorem]{Assumption}
\newtheorem{corollary}[theorem]{Corollary}
\newtheorem{lemma}[theorem]{Lemma}
\newtheorem{proposition}[theorem]{Proposition}
\theoremstyle{definition}
\newtheorem{definition}[theorem]{Definition}
\newtheorem{example}[theorem]{Example}
\newtheorem{notation}[theorem]{Notation}
\newtheorem{remark}[theorem]{Remark}
\newcommand{\MNN}[4]{\left\Vert #1\right\Vert_{#2,#3,#4}}
\newcommand{\m}[1]{\left\vert #1\right\vert}
\newcommand{\DCN}{D_{\rho_0,\rho_1,...,\rho_N}}
\newcommand{\DCn}{D_{\rho_1,...,\rho_N}}
\newcommand{\DCr}{D_{r,...,r}}
\newcommand{\Domn}{\mathcal{O}(D_{\rho_1,...,\rho_N})[[t]]}
\newcommand{\DomGn}[1]{\mathcal{O}(D_{\rho_1,...,\rho_N})[[t]]_{#1}}
\newcommand{\domn}{\mathcal{O}(D_{\rho_1,...,\rho_N})}
\newcommand{\TC}[2]{#1_{#2,\ast}}
\newcommand{\G}[1]{\Gamma(1+(s+1)#1)}
\newcommand{\C}{\mathbb C}
\newcommand{\R}{\mathbb R}
\newcommand{\Na}{\mathbb N}
\newcommand{\dprod}[3]{\displaystyle\prod_{#1}^{#2}#3}
\newcommand{\dsum}[3]{\displaystyle\sum_{#1}^{#2}#3}
\newcommand{\dint}[3]{\displaystyle\int_{#1}^{#2}#3}
\newcommand{\Oo}{\mathcal{O}}
\begin{document}

\title[Gevrey regularity for inhomogeneous nonlinear moment PDEs]{On Gevrey regularity of solutions for inhomogeneous nonlinear moment partial differential equations}

\author{Pascal Remy}
\address{Laboratoire de Mathématiques de Versailles,\\ Université de Versailles Saint-Quentin,\\ 45 avenue des
Etats-Unis, 78035 Versailles cedex, France}
\email{pascal.remy@uvsq.fr}

\author{Maria Suwi\'nska}
\address{Faculty of Mathematics and Natural Sciences,
College of Science\\
Cardinal Stefan Wyszy\'nski University\\
W\'oycickiego 1/3,
01-938 Warszawa, Poland}
\email{m.suwinska@op.pl}
\date{}

\maketitle

\begin{abstract}
 In this article we investigate Gevrey regularity of formal power series solutions for a certain class of nonlinear moment partial differential equations, the inhomogeneity of which is $\sigma$-Gevrey with respect to the time variable $t$ for a fixed $\sigma\ge 0$. The results are achieved by analyzing the geometric structure of the Newton polygon associated with the equation and are a generalization of similar results obtained for standard nonlinear partial differential equations as well as linear moment differential equations.
\end{abstract}

\section{Introduction}
The topic of Gevrey regularity has been studied freqeuntly in recent years. In particular, many advances have been made concerning formal solutions of linear partial differential equations with notable works being, among many others, \cite{BLR,BalYosh10,R16,R17,R20}. The topic has also been considered for nonlinear partial differential equations, as can be seen for example in \cite{Tah11,Tah12,ASh,R23a,R23b}.

Even more recently the notions of Gevrey estimates and summability have been applied to linear moment differential equations in \cite{Mich17,MichSu20,Su21} as well as their generalizations in the framework of strongly regular sequences in \cite{LaMichSu1,LaMichSu2,LaMichSu3,LaMichSu4}.

The purpose of the present work is to combine the results obtained by both authors regarding nonlinear partial differential equations and linear moment differential equatins. More precisely, we aim to generalize the results from \cite{R23a,R23b} to the case of nonlinear moment partial differential equations by using methods applied previously in \cite{Su21} exclusively to linear moment differential equations.

In the present paper, we consider a class of nonlinear moment partial differential equations in $1$-dimensional time variable $t\in\C$ and $N$-dimensional spatial variable $x=(x_1,...,x_N)\in\C^N$ of the form 
\begin{equation}\label{eqn}
\begin{cases}
\partial_{m_0;t}^\kappa u - P(t,x,(\partial_{m_0;t}^i\partial_{m;x}^qu)_{(i,q)\in\Lambda})=\widetilde{f}(t,x)\\
\partial_{m_0;t}^ju(t,x)_{|t=0}=\varphi_j(x) \textrm{ for }0\le j<\kappa,
\end{cases}
\end{equation}
where $P$ is a polynomial with analytic coefficients on a polydisc $D_{\rho_0,\rho_1,...,\rho_N}:=D_{\rho_0}\times D_{\rho_1}\times...\times D_{\rho_N}$ centered at the origin of $\C^{N+1}$ ($D_\rho$ stands for the disc with center $0\in\C$ and radius $\rho>0$), the inhomogeneity $\widetilde{f}(t,x)$ is a formal power series with respect to $t$ with all coefficients analytic on tbe polydisc $D_{\rho_1,...,\rho_N}$, and where the initial data $\varphi_j(x)$ are all analytic on $\DCn$. Our aim is to show that the Gevrey regularity of the formal solution $\widetilde{u}(t,x)$ of Eq. (\ref{eqn}) depends both on the Gevrey regularity of the inhomogeneity $\widetilde{f}(t,x)$ and on the structure of Eq. (\ref{eqn}), that is on the nonlinear operator $\Delta_{\kappa,P}:=\partial_{m_0;t}^\kappa-P(t,x,(\partial_{m_0;t}^i\partial_{m;x}^q)_{(i,q)\in\Lambda})$. More precisely, if $\widetilde{f}(t,x)$ is $\sigma$-Gevrey for a certain $\sigma\geq 0$, then $\widetilde{u}(t,x)$ is of Gevrey order either $\sigma$ or $\sigma_c>\sigma$ with $\sigma_c>0$ a nonnegative real number entirely determined by the operator $\Delta_{\kappa,P}$.

The paper is structured as follows:

In Section \ref{section:moments}, definitions of moment functions and moment differential operators are recalled with some basic properties listed. Regular moment functions are also defined. For more details on kernel functions and their associated moment functions as well as various operators connected to them we refer the reader to \cite{Balser}. In Section \ref{section:gevrey-series}, the definition and various properties of Gevrey formal power series are given. After that, in Section \ref{section:nagumo}, the notion of modified Nagumo norms is fleshed out, generalizing slightly the results shown in \cite{Su21}. In particular we prove that the norm defined in Definition \ref{defi:ModNagumoNorm} has properties analogous to classical Nagumo norm.

The main problem considered in this paper is properly introduced in Section \ref{section:main}. A definition of the Newton polygon for the considered equation is proposed (Definition \ref{defi:NewtonPolygon}). We also show that the problem is \textit{formally well-posed}, that is the considered equation has a unique formal power series solution under given assumptions. The main result of the paper is presented in Theorem \ref{GevreyRegularityTheorem}, which connects the critical value of the equation and the Gevrey order of the inhomogeneity with the Gevrey order of its formal solution. At the end of this section we also introduce several examples showcasing this result.

The last Section \ref{section:proof} is devoted entirely to the proof of Theorem \ref{GevreyRegularityTheorem}. First we use the modified Nagumo norms and the majorant method to prove the first point of the theorem. To prove the second point, we present a detailed example similar to the one used in a similar manner in \cite{R23b}.

{\begin{notation}
Throughout this paper, we use the following notations:
\begin{itemize}
\item $\Na$ stands for the set of all nonnegative integers and $\Na^*=\Na\setminus\{0\}$ for the set of all positive integers.
\item $\R^+$ stands for  the set of all the nonnegative real numbers and $\R^*_+$ for the set of all the positive real numbers.
\item For any $\alpha=(\alpha_1,\ldots,\alpha_N)\in(\R^+)^N$, we use $\lambda(\alpha)$ to denote the sum $\alpha_1+\ldots+\alpha_N$.
\item For any $\alpha=(\alpha_1,\ldots,\alpha_N)\in(\R^+)^N$, $\beta=(\beta_1,\ldots,\beta_N)\in(\R^+)^N$ and $c\in\R^+$, we use the following classical operations:
\begin{itemize}
\item$\alpha+\beta=(\alpha_1+\beta_1,...,\alpha_N+\beta_N)$;
\item$c\alpha=(c\alpha_1,...,c\alpha_N)$;
\item$\alpha\beta=(\alpha_1\beta_1,...,\alpha_N\beta_N)$ so that $\lambda(\alpha\beta)$ coincides with the usual scalar product in $\R^N$ between $\alpha$ and $\beta$;
\end{itemize}
\item for any $q=(q_1,\ldots,q_N)\in\Na^N$, $x=(x_1,\ldots,x_N)\in\R^N$ and moment functions $m_1,\ldots,m_N$, we use the following classical notation for moment differential operators: $\partial_{m;x}^q=\partial_{m_1;x_1}^{q_1}...\partial_{m_N;x_N}^{q_N}$.
\item $\Gamma$ stands for the Gamma function and $\Psi=(\ln\Gamma)'=\Gamma'/\Gamma$ for the Psi function.
\item For any $\rho_1,\ldots,\rho_N>0$ we denote by $\DCn$ the polydisc $D_{\rho_1}\times\ldots\times D_{\rho_N}\subset\C^N$, where $D_\rho=\{z\in\C:\ |z|<\rho\}$ for any $\rho>0$.
\item For any $d\in\R$ and $\alpha,R>0$, an open sector in direction $d$ with an opening $\alpha$ and a radius $R$ is a set
$$
S_d(\alpha,R)=\left\{x\in\C:\ 0<|x|<R,\,|\arg x-d|<\frac{\alpha}{2}\right\}.
$$
For a sector of an infinite radius we will use a notation $S_d(\alpha)$. Whenever the opening is not relevant, it will be omitted in the notation.
\item Given any open set $U\subset \C^N$, $N\in\Na^*$, we denote by $\Oo(U)$ the set of all holomorphic functions defined in $U$. The set of all formal power series in variable $t$ with coefficients from a fixed nonempty set $F$ will be denoted by $F[[t]]$. Similarly, by $\Oo[[t]]$ we will denote the set of all formal power series in variable $t$ with analytic coefficients in some common neighborhood of the origin.
\end{itemize}
\end{notation}

\section{Moment functions and moment differential operators}\label{section:moments}
\subsection{Moment functions}

Below we present the classical approach to kernel functions and their corresponding moment functions as given in \cite{Balser}.

\begin{definition}\label{defi:kernel-moment}
A pair $(e,E)$ of $\C$-valued functions is called \textit{kernel functions of order $s<2$} if the three following conditions hold:
\begin{enumerate}
\item The function $e$ satisfies the following points:
\begin{enumerate}
\item$e$ is holomorphic on the sector $S_{0}(\pi s)$;
\item$e(t)>0$ for all $t>0$;
\item the function $t^{-1}e(t)$ is integrable at zero;
\item $e$ is $k$-exponentially flat at infinity for $k=1/s$, that is, for every $\varepsilon>0$, there exist two positive constants $A,B>0$ such that $|e(x)|\le A\exp(-(|x|/B)^k)$ for all $x\in S_0(\pi s-\varepsilon)$.
\end{enumerate}
\item The function $E$ satisfies the following points:
\begin{enumerate}
\item$E$ is entire on $\C$ with a global exponential growth of order at most $k=1/s$ at infinity;
\item the function $t^{-1}E(t)$ is integrable at zero in $S_{\pi}(\pi(2-s))$.
\end{enumerate}
\item The functions $e$ and $E$ are connected by a corresponding \textit{moment function $m$ of order $s$} as follows:
\begin{enumerate}
\item the function $m$ is defined by the Mellin transform of $e$:
\begin{equation}\label{DefMomentFunction}
m(\lambda)=\dint{0}{+\infty}{t^{\lambda-1}e(t)dt}\quad\text{for all Re}(\lambda)\geq0;
\end{equation}
\item the function $E$ has the power series expansion
\begin{equation}\label{LinkEMomentFunction}
E(t)=\dsum{j\geq0}{}{\dfrac{t^j}{m(j)}}\quad\text{for all }t\in\C.
\end{equation}
\end{enumerate}
\end{enumerate}
\end{definition}

\begin{remark}
 For the sake of simplicity, we shall henceforth assume that $m(0)=1$ for any moment function.
\end{remark}

\begin{definition}[Moment sequence]
 Let us consider a moment function $m$ of order $s$. Then we call $(m(j))_{j\ge 0}$ a moment sequence of order $s$.
\end{definition}

It is necessary to adjust Definition \ref{defi:kernel-moment} so that kernel functions of all positive orders $s\ge 2$ can be considered as well.

\begin{definition}[See \cite{Balser}, Section 5.6.]
 Let $s>0$ and suppose that there exists $p\in\Na$ such that $s/p<2$. Then we define a kernel function
$e$ of order $s$ as 
\begin{equation*}
e(t)=\frac{\hat{e}(x^{\frac{1}{p}})}{p}.
\end{equation*}
where $\hat{e}(t)$ is a kernel function of order $s/p<2$ as defined in Definition \ref{defi:kernel-moment}. Then the corresponding kernel function $E(t)$ and moment function $m$ are defined by the same formul\ae\ in relation to $e(t)$ and each other as in Definition \ref{defi:kernel-moment}.
\end{definition}

\begin{example}
The following classical example of kernel functions and their corresponding moment function is widely used in the classical theory of $k$-summability:
\begin{itemize}
 \item $e(t)=kt^{k}e^{-t^k}$;
  \item $E(t)=\dsum{j\ge 0}{}{\frac{t^j}{\Gamma(1+sj)}}=\mathbf{E}_{s}(x)$ the Mittag-Leffler function of index $s$;
  \item $m(\lambda)=\Gamma(1+s\lambda)$.
\end{itemize}
\end{example}

\begin{proposition}[See \cite{Balser}, Section 5.5.]
Observe that the integral (\ref{DefMomentFunction}) being absolutely and locally uniformly convergent, the function $m$ is holomorphic for $\text{Re}(\lambda)>0$ and continuous up to the imaginary axis, and the values $m(\lambda)$ are positive real numbers for all $\lambda\geq0$. Moreover, accordingly the asymptotic behavior of kernel functions $e$ and $E$, we deduce from the identities (\ref{DefMomentFunction}) and (\ref{LinkEMomentFunction}) that there exist four positive constants $c,C,a,A>0$ such that the following estimate holds for all $j\geq0$:
\begin{equation}\label{IneqMomentFunction}
ca^j\G{j}\leq m(j)\leq CA^j\G{j}.
\end{equation}
\end{proposition}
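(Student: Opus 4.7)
The plan is to establish four assertions in order: holomorphy of $m$ on $\{\text{Re}(\lambda)>0\}$, continuity up to the imaginary axis, positivity of $m(\lambda)$ for $\lambda\in\R^+$, and the two-sided estimate (\ref{IneqMomentFunction}).

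For the first three points I would split the integral $m(\lambda)=\dint{0}{+\infty}{t^{\lambda-1}e(t)\,dt}$ as $\dint{0}{1}{}\ +\dint{1}{+\infty}{}$. Near $t=0$, condition (1)(c) of Definition \ref{defi:kernel-moment} (integrability of $t^{-1}e(t)$) together with $|t^{\lambda-1}|=t^{\text{Re}(\lambda)-1}$ yields absolute convergence as soon as $\text{Re}(\lambda)\geq 0$; for $t\geq 1$, the exponential flatness of $e$ at infinity from condition (1)(d) dominates any power $t^{\text{Re}(\lambda)-1}$, and the convergence is locally uniform on compact subsets of $\{\text{Re}(\lambda)\geq 0\}$. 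Holomorphy on $\{\text{Re}(\lambda)>0\}$ and continuity up to the imaginary axis then follow from standard tools (Morera's theorem, differentiation under the integral sign, dominated convergence). Positivity for $\lambda\geq 0$ is immediate: the integrand $t^{\lambda-1}e(t)$ is positive on $(0,+\infty)$ by condition (1)(b), hence its integral is a positive real number.

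For the upper bound in (\ref{IneqMomentFunction}), I would fix $\varepsilon>0$ and invoke the estimate $|e(t)|\leq A_{0}\exp\bigl(-(t/B_{0})^{k}\bigr)$ with $k=1/s$ given by condition (1)(d). After the substitution $u=(t/B_{0})^{k}$ one obtains
\[
m(j)\leq A_{0}\dint{0}{+\infty}{t^{j-1}\exp\bigl(-(t/B_{0})^{k}\bigr)\,dt}=\frac{A_{0}B_{0}^{\,j}}{k}\,\Gamma(sj),
\]
and Stirling's formula recasts this as a bound of the form $CA^{j}\G{j}$ for suitable positive constants $C,A$. The lower bound is obtained through the link (\ref{LinkEMomentFunction}) with $E$: since $E$ is entire of exponential order at most $k=1/s$, there exist $C',D>0$ such that $|E(t)|\leq C'\exp\bigl((|t|/D)^{k}\bigr)$ on $\C$. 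Cauchy's estimates applied on the circle $|t|=R$ then yield
\[
\frac{1}{m(j)}\leq \frac{C'\exp\bigl((R/D)^{k}\bigr)}{R^{j}}\quad\text{for every }R>0,\ j\in\Na.
\]
Optimizing in $R$ (the minimum is reached for $R^{k}=sjD^{k}$, i.e.\ $R\sim j^{s}$) and applying Stirling once more produces the desired estimate $m(j)\geq ca^{j}\G{j}$.

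The principal difficulty will lie in the lower bound: whereas upper bounds on $e$ transfer directly to upper bounds on $m(j)$, controlling $m(j)$ from below forces one to pass through the auxiliary function $E$, to exploit its \emph{global} exponential growth on all of $\C$, and to calibrate the Cauchy radius so that after Stirling's formula the resulting estimate aligns precisely with $\G{j}$ up to the constants $c,a$. A routine but necessary separate check is the case $j=0$, where $\G{0}=1$ matches the standing convention $m(0)=1$, so the inequalities hold trivially.
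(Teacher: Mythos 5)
The paper itself gives no proof of this proposition --- it is quoted verbatim from Balser's book --- so your reconstruction is the relevant object, and the strategy you follow (splitting the integral at $t=1$ for holomorphy and boundary continuity, positivity from $e(t)>0$ on $(0,+\infty)$, the upper bound by substituting $u=(t/B_0)^k$ in the exponential majorant of $e$, and the lower bound by Cauchy estimates on $E$ exploiting its global exponential growth of order $k$, optimized at $R^k= sjD^k$) is exactly Balser's argument and is sound. Two routine points you rightly isolate or should isolate: the case $j=0$ must be treated separately, since the majorized integral $\int_0^\infty t^{-1}\exp(-(t/B_0)^k)\,dt$ diverges at the origin even though $m(0)=1$ by convention; and for orders $s\geq2$ one must pass through the reduction $e(t)=\hat e(t^{1/p})/p$, which only affects the constants.

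The one step you must not leave as ``Stirling's formula recasts this'' is the final calibration of the exponent, because that is exactly where the statement as printed goes wrong. Your computation yields $m(j)\leq \frac{A_0B_0^{j}}{k}\Gamma(sj)$ on one side and, after optimization, $m(j)\geq c\,D^{j}(sj)^{sj}e^{-sj}$ on the other; by Stirling both are comparable, up to geometric factors in $j$, to $\Gamma(1+sj)=\Gamma(1+j/k)$, which is Balser's actual estimate. They are \emph{not} comparable to $\Gamma(1+(s+1)j)$, which is what the macro in (\ref{IneqMomentFunction}) literally expands to: the ratio $\Gamma(1+(s+1)j)/\Gamma(1+sj)$ grows like $j^{j}$ and cannot be absorbed into a factor $a^{j}$, so the lower bound in the printed form already fails for the model case $m(\lambda)=\Gamma(1+s\lambda)$. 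Your argument therefore proves the correct, intended statement with $\Gamma(1+sj)$ in place of $\Gamma(1+(s+1)j)$; carry out the Stirling computation explicitly so that the exponent you actually obtain is visible, and note the discrepancy with the displayed formula rather than asserting that Stirling bridges it.
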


The concept of regular moment functions described below was first introduced in \cite{MichSu20}. It was also used later,   without the connection to kernel functions, in \cite{Su21}.

\begin{definition}[Regular moment function]\label{defi:moment_regular}
 A moment function $m$ of order $s>0$ is called \textit{regular} if there exist two positive constants $a,A>0$ such that
 $$
 a(j+1)^s\le\frac{m(j+1)}{m(j)}\le A(j+1)^s\quad\textrm{for every}\quad j\in\Na.
 $$
\end{definition}

\begin{example}
For any fixed $s>0$, the moment function $m(\lambda)=\Gamma(1+s\lambda)$ is a regular moment function of order $s$.

Indeed, if we consider Stirling's Formula
\begin{equation*}
\label{eq:stirling}
\sqrt{2\pi}t^{t-\frac{1}{2}}e^{-t}\leq\Gamma(t)\leq\sqrt{2\pi}e^{\frac{1}{12t}}t^{t-\frac{1}{2}}e^{-t}<\sqrt{2\pi}t^{t-\frac{1}{2}}e^{-t+1}\textrm{ for every }t\geq 1,
\end{equation*}
then for every $j\in\Na$
$$
\frac{\Gamma(1+js)}{\Gamma(1+js-s)} \le e^{-s+1}\left(\frac{1+js}{1+js-s}\right)^{1+js-s-\frac{1}{2}}(1+js)^{s}\le \left(1+\frac{1}{s}\right)^{s}es^{s}j^{s}
$$
and
$$
\frac{\Gamma(1+js)}{\Gamma(1+js-s)} \ge e^{-s-1}\left(\frac{1+js}{1+js-s}\right)^{js-s+\frac{1}{2}}(1+js)^{s}\ge e^{-s-1}s^{s}j^{s}.
$$
\end{example}

\subsection{Moment differentiation}
The notion of moment differential operators or moment derivatives was first introduced by W.~Balser and M.~Yoshino in \cite{BalYosh10}.

\begin{definition}[Moment derivation]\label{MomentDifferentiationOperator}
Let $m_0$ be a moment function of order $s_0>0$ and $\widetilde{u}(t,x)\in\Domn$ a formal power series written in the form
$$\widetilde{u}(t,x)=\sum_{j\geq0}u_{j,*}(x)\frac{t^{j}}{m_0(j)}.$$
Then, the \textit{moment derivative $\partial_{m_0;t}\widetilde{u}$ of $\widetilde{u}(t,x)$ with respect to $t$} is the formal power series in $\Domn$ defined by
$$\partial_{m_0;t}\widetilde{u}(t,x)=\dsum{j\geq0}{}{\TC{u}{j+1}(x)\dfrac{t^j}{m_0(j)}}.$$
\end{definition}

Observe that, for $m_0(\lambda)=\Gamma(1+\lambda)$, the operator $\partial_{m_0;t}$ coincides with the standard derivation operator $\partial_t$ with respect to $t$.

Observe also that Definition \ref{MomentDifferentiationOperator} can be naturally extended to analytic functions at the origin of $\C^{n+1}$ by means of their representation in the form of an infinite series. In particular, we can define in the same way the moment derivation $\partial_{m_j;x_j}$ with respect to $x_j$ for any moment function $m_j$ of order $s_j>0$ and any $j\in\{1,...,N\}$. Thereby, for any formal power series $\widetilde{u}(t,x)\in\Domn$ written in the form
$$\widetilde{u}(t,x)=\dsum{j_0,j_1,...,j_N\geq0}{}{u_{j_0,j_1,...,j_N}\dfrac{t^{j_0}}{m_0(j_0)}\dfrac{x_1^{j_1}}{m_1(j_1)}...\dfrac{x_N^{j_N}}{m_N(j_N)}},$$
the following identity holds for any $i_0,i_1,...,i_N\geq0$:
\begin{multline*}
\partial_{m_0;t}^{i_0}\partial_{m_1;x_1}^{i_1}...\partial_{m_N;x_N}^{i_N}\widetilde{u}(t,x)\\=\dsum{j_0,...,j_n\geq0}{}{u_{j_0+i_0,j_1+i_1,...,j_N+i_N}\dfrac{t^{j_0}}{m_0(j_0)}\dfrac{x_1^{j_1}}{m_1(j_1)}...\dfrac{x_N^{j_N}}{m_N(j_N)}}.
\end{multline*}

Observe that the operator $\partial_{m_0;t}$ commutes with any operator $\partial_{m_j;x_j}$, and that the operator $\partial_{m_j;x_j}$ commutes with any operator $\partial_{m_\ell;x_\ell}$ as soon as $j\neq\ell$.

Observe also that the previous definition can be also naturally extended to analytic functions at the origin of $\C^{N+1}$ by means of their representation in the form of an infinite series. Doing so, and using inequality (\ref{IneqMomentFunction}), one can easily check that, if $a(t,x)$ is an analytic function at the origin of $\C^{N+1}$, say on a polydisc $\DCN$, then the formal power series $\partial_{m_0;t}^{i_0}\partial_{m_1;x_1}^{i_1}...\partial_{m_N;x_N}^{i_N}a(t,x)$ defines an analytic function on a polydisc $D_{\rho_0',\rho_1',...,\rho_N'}$ with convenient radii $0<\rho_j'\leq\rho_j$ for all $j=0,...,N$. In particular, this function may be analytic on a polydisc smaller than the initial polydisc of analyticity of $a(t,x)$.

However, as the following result shows, this does not occur in the case where the moments $m_j$ are all regular.

\begin{proposition}\label{AnalyticFunctionMomentDerivatives}
Let $a(t,x)\in\mathcal O(\DCn)$ be an analytic function on $\DCn$, and let $m_0,m_1,...,m_n$ be $n+1$ regular moment functions of respective orders $s_0,s_1,...,s_n>0$. Then, for any $i_0,i_1,...,i_N\geq0$, the formal power series $\partial_{m_0;t}^{i_0}\partial_{m_1;x_1}^{i_1}...\partial_{m_N;x_N}^{i_n}a(t,x)$ also define analytic functions on $\DCn$.
\end{proposition}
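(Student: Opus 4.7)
The plan is to use the explicit formula for the moment derivative in terms of the Taylor coefficients of $a(t,x)$, together with Cauchy's estimates and a careful use of the regularity hypothesis on the $m_k$.

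First, I would expand $a(t,x)$ as
\[
a(t,x)=\sum_{j_0,\ldots,j_N\geq 0}a_{j_0,\ldots,j_N}\,t^{j_0}x_1^{j_1}\cdots x_N^{j_N}=\sum_{j_0,\ldots,j_N\geq 0}\widetilde a_{j_0,\ldots,j_N}\frac{t^{j_0}}{m_0(j_0)}\cdots\frac{x_N^{j_N}}{m_N(j_N)},
\]
with $\widetilde a_{j_0,\ldots,j_N}=a_{j_0,\ldots,j_N}\,m_0(j_0)\cdots m_N(j_N)$. Applying Definition~\ref{MomentDifferentiationOperator} componentwise, the coefficient of $t^{j_0}x_1^{j_1}\cdots x_N^{j_N}$ in $\partial_{m_0;t}^{i_0}\partial_{m_1;x_1}^{i_1}\cdots\partial_{m_N;x_N}^{i_N}a(t,x)$ is
\[
b_{j_0,\ldots,j_N}=a_{j_0+i_0,\ldots,j_N+i_N}\,\prod_{k=0}^{N}\frac{m_k(j_k+i_k)}{m_k(j_k)}.
\]

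Next, I would bound the ratios using the regularity assumption. Iterating $m_k(j+1)/m_k(j)\leq A_k(j+1)^{s_k}$ gives
\[
\frac{m_k(j_k+i_k)}{m_k(j_k)}=\prod_{\ell=0}^{i_k-1}\frac{m_k(j_k+\ell+1)}{m_k(j_k+\ell)}\leq A_k^{i_k}(j_k+i_k)^{i_k s_k},
\]
so the extra factor picked up by the derivative is polynomial in the $j_k$'s, with a constant depending only on the fixed multi-index $(i_0,\ldots,i_N)$.

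Then I would invoke Cauchy's estimates on $a$: for arbitrary $0<\rho'_k<\rho_k$ there exists $M=M(\rho')$ with $|a_{j_0,\ldots,j_N}|\leq M\prod_k(\rho'_k)^{-j_k}$. Combining with the ratio bound,
\[
|b_{j_0,\ldots,j_N}|\leq M\,\prod_{k=0}^{N}A_k^{i_k}(\rho'_k)^{-i_k}(j_k+i_k)^{i_k s_k}\prod_{k=0}^{N}(\rho'_k)^{-j_k}.
\]
Since each $(j_k+i_k)^{i_k s_k}$ grows only polynomially in $j_k$ while $(\rho'_k)^{-j_k}$ gives a geometric weight, the associated power series converges absolutely on the polydisc $D_{\rho'_0,\ldots,\rho'_N}$. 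As $\rho'_k<\rho_k$ was arbitrary, the moment derivative extends to a holomorphic function on the full polydisc $D_{\rho_0,\ldots,\rho_N}$, which is the desired conclusion.

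The only mildly delicate step is the iteration of the regularity inequality; once that polynomial-in-$j$ bound on $m_k(j+i)/m_k(j)$ is in hand, the Cauchy-estimate argument is routine and, crucially, does not force one to shrink the polydisc beyond what Cauchy's estimates already allow. Without the regularity hypothesis, the generic inequality~(\ref{IneqMomentFunction}) would inflate the coefficient $\widetilde a_{j+i}/\widetilde a_j$ by a factor like $A^{i(j+i)}$, which would contract the radius of analyticity; this is precisely why regularity is the right hypothesis here.
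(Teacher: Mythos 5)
Your proposal is correct and follows essentially the same route as the paper's proof: expand in the moment-normalized Taylor coefficients, use the regularity hypothesis to bound $m_k(j_k+i_k)/m_k(j_k)$ by a quantity polynomial in $j_k$, and absorb that polynomial growth into the geometric decay supplied by Cauchy's estimates on a slightly smaller polydisc. The only cosmetic difference is that the paper fixes two nested radii $r_d<r'_d<\rho_d$ to get normal convergence on closed polydiscs, while you let a single $\rho'_k<\rho_k$ vary; both yield analyticity on all of $\DCn$.
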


\begin{proof}
Let us write $a(t,x)$ in the form
$$a(t,x)=\dsum{j_0,j_1,...,j_N\geq0}{}{a_{j_0,j_1,...,j_N}\dfrac{t^{j_0}}{m_0(j_0)}\dfrac{x_1^{j_1}}{m_1(j_1)}...\dfrac{x_N^{j_N}}{m_N(j_N)}}$$
so that
$$\partial_{m_0;t}^{i_0}\partial_{m_1;x_1}^{i_1}...\partial_{m_N;x_N}^{i_N}a(t,x)=\dsum{j_0,...,j_N\geq0}{}{v_{j_0,j_1,...,j_N}t^{j_0}x_1^{j_1}...x_N^{j_N}}$$
with
$$v_{j_0,j_1,...,j_N}=\dfrac{a_{j_0+i_0,j_1+i_1,...,j_N+i_N}}{m_0(j_0)m_1(j_1)...m_N(j_N)}.$$
For any $d\in\{0,...,N\}$, let us choose two radii $r_d,r'_d>0$ such that $r_d<r'_d<\rho_d$. By assumption, there exists a positive constant $C
>0$ such that
\begin{equation*}
\m{\dfrac{a_{j_0,j_1,...,j_N}}{m_0(j_0)m_1(j_1)...m_N(j_N)}}\leq C\left(\dfrac{1}{r_0'}\right)^{j_0}\left(\dfrac{1}{r_1'}\right)^{j_1}...\left(\dfrac{1}{r_N'}\right)^{j_N}
\end{equation*}
for all $j_0,j_1,...,j_N\geq0$. Then, for all $\m{t}\leq r_0$ and all $\m{x_d}\leq r_d$, $d=1,...,N$, we get
$$\m{v_{j_0,j_1,...,j_N}t^{j_0}x_1^{j_1}...x_N^{j_N}}\leq\dfrac{C}{r_0'^{i_0}r_1'^{i_1}...r_N'^{i_N}}\left(\dprod{d=0}{N}{\left(\dfrac{r_d}{r'_d}\right)^{j_d}}\right)\left(\dprod{d=0}{N}{\dfrac{m_d(j_d+i_d)}{m_d(j_d)}}\right)$$
for all $j_0,j_1,...,j_N$. Since $m_d$ is a regular moment function of order $s_d$, there exist two positive constants $c_d,C_d>0$ such that
$$c_d (j+1)^{s_d}\leq \dfrac{m_d(j+1)}{m_d(j)}\leq C_d (j+1)^{s_d}$$
for all $j\geq0$. Then,
$$\dfrac{m_d(j_d+i_d)}{m_d(j_d)}=\begin{cases}
1&\text{if }i_d=0\\
\dprod{k=j_d}{j_d+i_d-1}{\dfrac{m_d(k+1)}{m_d(k)}}&\text{if }i_d\geq1\end{cases}\leq C_{d}^{i_d}(j_d+1)^{s_d}...(j_d+i_d)^{s_d}$$
and the previous estimates become
$$\m{v_{j_0,j_1,...,j_n}t^{j_0}x_1^{j_1}...x_n^{j_n}}\leq\dfrac{C }{r_0'^{i_0}r_1'^{i_1}...r_n'^{i_n}}\left(\dprod{d=0}{N}{C_d^{i_d}(j_d+1)^{s_d}...(j_d+i_d)^{s_d}\left(\dfrac{r_d}{r'_d}\right)^{j_d}}\right)$$
for all $j_0,j_1,...,j_N$, all $\m{t}\leq r_0$ and all $\m{x_d}\leq r_d$.

Since $r_d<r'_d$, these inequalities prove in particular that the formal power series $\partial_{m_0;t}^{i_0}\partial_{m_1;x_1}^{i_1}...\partial_{m_n;x_n}^{i_n}a(t,x)$ is normally convergent on the closed polydisc $\overline{D}_{r_0}\times\overline{D}_{r_1}\times...\times\overline{D}_{r_n}$; hence, on all the compact sets of $\DCn$. Consequently, it defines an analytic function on $\DCn$, which completes the proof.
\end{proof}
 
\section{Gevrey formal power series}\label{section:gevrey-series}

Let us now recall the definition and basic properties of formal power series of a given Gevrey order $\sigma$.

\begin{definition}[Gevrey order]\label{defi:gevrey-order}
 Let $\sigma\ge 0$. Then, a formal power series $$\widetilde{u}(t,x)=\dsum{j\ge 0}{}{\TC{u}{j}(x)t^j}\in\Domn$$
 is said to be \textit{Gevrey of order $\sigma$} (or, for short, \textit{$\sigma$-Gevrey}) if there exist a radius $0<r<\min\{\rho_1,\ldots,\rho_N\}$ and two positive constants $C,K>0$ such that the inequalities
 $$
\m{\TC{u}{j}(x)}\le CK^j \Gamma(1+\sigma j)
 $$
hold for all $x\in D_{r,...,r}$ and all $j\geq0$.
\end{definition}

In other words, Definition \ref{defi:gevrey-order} means that $\widetilde{u}(t,x)$ is $\sigma$-Gevrey in $t$, uniformly in $x$ on a neighborhood of $x=(0,...,0)\in\mathbb C^N$.

\begin{notation}
\textnormal{We denote by $\DomGn{\sigma}$ the set of all the formal series in $\Domn$ which are $\sigma$-Gevrey.}
\end{notation}

Observe that any formal power series in $\DomGn{0}$ defines an analytic function at the origin of $\C^{N+1}$.

Observe also that the sets $\DomGn{\sigma}$ are filtered as follows:
\begin{multline}
\DomGn{0}\subset\DomGn{\sigma}\\\subset\DomGn{\sigma'}\subset\Domn
\label{FiltrationGevreySpaces}
\end{multline}
for all $\sigma$ and $\sigma'$ satisfying $0<\sigma<\sigma'<+\infty$.

The proposition below specifies the algebraic structure of $\DomGn{\sigma}$.

\begin{proposition}[\cite{R23b}]\label{GevreyAlgStructure}
Let $\sigma\geq0$. Then, the set $\DomGn{\sigma}$ endowed with the usual algebraic operations and the usual derivations $\partial_t$ and $\partial_{x_{d}}$ with $d=1,...,N$ is a $\C$-differential algebra.
\end{proposition}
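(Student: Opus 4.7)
My plan is to verify the three components of ``$\C$-differential algebra'' separately: (a) $\DomGn{\sigma}$ is a $\C$-vector subspace of $\Domn$, (b) it is closed under the Cauchy product, and (c) it is stable under each of the derivations $\partial_t$ and $\partial_{x_d}$, $d=1,\dots,N$. Point (a) is immediate: for $\widetilde u,\widetilde v\in\DomGn{\sigma}$ with Gevrey estimates valid on polydiscs $D_{r,\dots,r}$ and $D_{r',\dots,r'}$ with constants $(C_1,K_1)$ and $(C_2,K_2)$, restricting both series to $D_{\min(r,r'),\dots,\min(r,r')}$ and taking $\widetilde C=|a|C_1+|b|C_2$, $\widetilde K=\max(K_1,K_2)$ immediately yields the $\sigma$-Gevrey estimate for $a\widetilde u+b\widetilde v$.

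For (b), write $\widetilde u\,\widetilde v=\sum_{j\ge0}w_{j,\ast}(x)t^j$ with $w_{j,\ast}=\sum_{k=0}^j u_{k,\ast}v_{j-k,\ast}$. On a common polydisc I would combine the individual Gevrey estimates with the Beta-integral bound
\[
\frac{\Gamma(1+\sigma k)\,\Gamma(1+\sigma(j-k))}{\Gamma(2+\sigma j)}=B\bigl(1+\sigma k,1+\sigma(j-k)\bigr)\le 1,
\]
which gives
\[
|w_{j,\ast}(x)|\le C_1C_2\,(j+1)(1+\sigma j)\,\bigl(\max(K_1,K_2)\bigr)^j\Gamma(1+\sigma j).
\]
The polynomial prefactor $(j+1)(1+\sigma j)$ is then absorbed by replacing $\max(K_1,K_2)$ with any strictly larger constant, placing $\widetilde u\,\widetilde v$ in $\DomGn{\sigma}$.

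For (c), first consider $\partial_t\widetilde u=\sum_{j\ge0}(j+1)u_{j+1,\ast}(x)t^j$: the bound $|u_{j+1,\ast}(x)|\le CK^{j+1}\Gamma(1+\sigma(j+1))$, combined with the consequence of Stirling's formula (already invoked for the regular moment function example) that $\Gamma(1+\sigma(j+1))/\Gamma(1+\sigma j)\le c(j+1)^\sigma$ for some $c>0$, shows that the coefficient of $t^j$ is dominated by $cC(j+1)^{\sigma+1}K^{j+1}\Gamma(1+\sigma j)$; absorbing the factor $(j+1)^{\sigma+1}$ into a slightly larger geometric constant yields $\partial_t\widetilde u\in\DomGn{\sigma}$. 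For $\partial_{x_d}$, the coefficients $u_{j,\ast}$ are holomorphic on $D_{r,\dots,r}$, so fixing any $0<r'<r$ and applying Cauchy's inequalities gives
\[
|\partial_{x_d}u_{j,\ast}(x)|\le\frac{1}{r-r'}\sup_{y\in D_{r,\dots,r}}|u_{j,\ast}(y)|\le\frac{C}{r-r'}K^j\Gamma(1+\sigma j)
\]
for $x\in D_{r',\dots,r'}$, so $\partial_{x_d}\widetilde u\in\DomGn{\sigma}$ with unchanged $K$ on the slightly smaller polydisc.

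The argument is essentially bookkeeping; the only genuine analytic ingredients are the Beta-function bound used in (b) and Stirling's formula used in (c). The one subtle point that must not be forgotten is that each operation may require either a slight shrinkage of the polydisc (for $\partial_{x_d}$) or a slight enlargement of $K$ (for multiplication and $\partial_t$), but in both cases these adjustments stay inside the definition of $\DomGn{\sigma}$; this is the only place where I expect any real care.
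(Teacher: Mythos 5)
Your proof is correct. The paper itself gives no proof of this proposition---it is simply quoted from \cite{R23b}---and your argument (the Beta-function bound $\Gamma(1+\sigma k)\Gamma(1+\sigma(j-k))\le\Gamma(2+\sigma j)$ for the Cauchy product, the Stirling-type quotient bound for $\partial_t$, and Cauchy's inequalities on a slightly shrunken polydisc for $\partial_{x_d}$) is the standard one, with the adjustments of $K$ and of the polydisc correctly accounted for.
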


With respect to moment derivations $\partial_{m_0;t}$ and $\partial_{m_d;x_d}$, we can also prove the following.

\begin{proposition}\label{GevreyMomentDerivatives}
Let $m_0,m_1,...,m_N$ be $N+1$ moment functions and $\widetilde{u}(t,x)\in\Domn$ a $\sigma$-Gevrey formal power series with $\sigma\geq0$. Then,
\begin{enumerate}
\item the formal power series $\partial_{m_0;t}^{i_0}\widetilde{u}(t,x)$ is still $\sigma$-Gevrey for any $i_0\geq0$.
\item  the formal power series $\partial_{m_1;x_1}^{i_1}...\partial_{m_N;x_N}^{i_N}\widetilde{u}(t,x)$ is still $\sigma$-Gevrey for any $i_1,....,i_N\geq0$.
\end{enumerate}
\end{proposition}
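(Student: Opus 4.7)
The plan is to bound the $t^j$-coefficients of the moment derivatives directly in terms of the $t^j$-coefficients of $\widetilde{u}$, using the asymptotic estimate \eqref{IneqMomentFunction} to handle the scaling factors introduced when one shifts indices. Write $\widetilde{u}(t,x)=\sum_{j\geq 0}\TC{u}{j}(x)\,t^j$ with the given bound $|\TC{u}{j}(x)|\leq CK^j\Gamma(1+\sigma j)$ valid on some polydisc $\DCr$, and in each case I aim to produce a bound of the same shape, with possibly larger constants and possibly smaller radius, for the coefficients of the moment derivative.

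For point (1), I would first rewrite $\widetilde{u}$ in the intrinsic form $\sum_{j\geq 0}(\TC{u}{j}(x)m_0(j))\,t^j/m_0(j)$ required by Definition \ref{MomentDifferentiationOperator}. The operator $\partial_{m_0;t}^{i_0}$ then acts as a shift, so the new $t^j$-coefficient equals $\TC{u}{j+i_0}(x)\,m_0(j+i_0)/m_0(j)$. The Gevrey hypothesis gives the factor $CK^{j+i_0}\Gamma(1+\sigma(j+i_0))$, while \eqref{IneqMomentFunction} controls $m_0(j+i_0)/m_0(j)$ by the product of a constant depending on $i_0$, an exponential factor of the form $(A_0/a_0)^{j}$, and a ratio of $\Gamma$-functions which grows only polynomially in $j$. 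Absorbing the exponential-in-$j$ terms into a new $K'$ and the polynomial factor together with the $\Gamma(1+\sigma(j+i_0))/\Gamma(1+\sigma j)$ ratio into a further harmless inflation of $K'$, one obtains the required $\sigma$-Gevrey bound on the same polydisc.

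For point (2), I would work at fixed $j$ and transfer the problem to the Taylor coefficients of $\TC{u}{j}$. Cauchy's inequalities on $\DCr$ yield $|a_{j,\alpha}|\leq CK^j\Gamma(1+\sigma j)/r^{\lambda(\alpha)}$ for the Taylor coefficients $a_{j,\alpha}$ of $\TC{u}{j}$. Expressing $\TC{u}{j}$ in the basis $x^\alpha/\prod_\ell m_\ell(\alpha_\ell)$ and applying $\partial_{m_1;x_1}^{i_1}\cdots\partial_{m_N;x_N}^{i_N}$ amounts to shifting the multi-index by $i=(i_1,\ldots,i_N)$. Estimating the result in sup-norm on a smaller polydisc $D_{r',\ldots,r'}$ then bounds the new $t^j$-coefficient by $CK^j\Gamma(1+\sigma j)$ times a multiple series whose general term is
\[
\left(\frac{r'}{r}\right)^{\lambda(\alpha)}\dprod{\ell=1}{N}{\dfrac{m_\ell(\alpha_\ell+i_\ell)}{m_\ell(\alpha_\ell)}}.
\]
By \eqref{IneqMomentFunction}, each ratio is dominated by a quantity of order $(A_\ell/a_\ell)^{\alpha_\ell}$ times a polynomial in $\alpha_\ell$, so the series converges provided $r'$ is chosen with $(r'/r)(A_\ell/a_\ell)<1$ for every $\ell$. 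This yields a uniform bound $|\widetilde{v}_{j,*}(x)|\leq MCK^j\Gamma(1+\sigma j)$ on $D_{r',\ldots,r'}$ with $M$ independent of $j$, which is the desired $\sigma$-Gevrey estimate.

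I do not expect a major obstacle: both statements are essentially bookkeeping, driven by the combination of \eqref{IneqMomentFunction} with the definition of $\sigma$-Gevrey series. The slightly delicate point of (1) is verifying that the $\Gamma$-ratio introduced by $m_0$ grows only polynomially in $j$, so that it does not worsen the Gevrey order; the only subtlety of (2) is the need to shrink the radius, exactly as in Proposition \ref{AnalyticFunctionMomentDerivatives}, of which the present argument is essentially a $j$-uniform replay.
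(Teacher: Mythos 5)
Your argument is correct, and for point (2) it takes a genuinely different route from the paper. For point (1) you do essentially what the authors intend (they leave it to the reader as ``similar to Proposition \ref{AnalyticFunctionMomentDerivatives}''): the shifted coefficient is $\TC{u}{j+i_0}(x)\,m_0(j+i_0)/m_0(j)$, and the two-sided estimate (\ref{IneqMomentFunction}) bounds the moment ratio by a constant times $(A/a)^j$ times a factor growing polynomially in $j$, all of which is absorbed into a larger $K'$ without touching the polydisc. For point (2), however, the paper does not argue this way at all: it declares the proof ``much more complicated,'' invokes the integral representation of moment derivatives from \cite[Prop.~3]{Mic13c}, and defers to \cite{Rsub}. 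Your coefficient-wise argument --- Cauchy estimates on $D_{r,\dots,r}$, the shift $a_{j,\alpha}\mapsto a_{j,\alpha+i}\prod_\ell m_\ell(\alpha_\ell+i_\ell)/m_\ell(\alpha_\ell)$, and (\ref{IneqMomentFunction}) to dominate each ratio by $(A_\ell/a_\ell)^{\alpha_\ell}$ times a polynomial of degree depending only on $i_\ell$ --- yields a $j$-independent bound $M$ on the polydisc $D_{r',\dots,r'}$ with $r'<r\min_\ell(a_\ell/A_\ell)$, and this is exactly what the statement requires, since (as the paper's own remark after the proposition concedes) no control on the domain of analyticity of $\partial_{m;x}^{i}\TC{u}{j}$ is claimed. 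What your elementary route gives up is precisely that control: the radius shrinks by the factor $\min_\ell(a_\ell/A_\ell)$, which is unavoidable without the regularity hypothesis used in Proposition \ref{AnalyticFunctionMomentDerivatives} and Corollary \ref{StabilityGevreyMomentDerivative}; what it buys is self-containedness, avoiding the external integral-representation machinery entirely. Two small points worth making explicit if you write this up: the polynomial growth of the ratios $\Gamma(1+\sigma(j+i_0))/\Gamma(1+\sigma j)$ and $\Gamma(1+(s_\ell+1)(\alpha_\ell+i_\ell))/\Gamma(1+(s_\ell+1)\alpha_\ell)$ should be justified by Stirling's formula, and you should note that the normal convergence of your majorizing series on $D_{r',\dots,r'}$ is also what guarantees that $\partial_{m;x}^{i}\TC{u}{j}$ is analytic there, so that the conclusion is meaningful as a statement about a series in $\mathcal O(D_{r',\dots,r'})[[t]]$.
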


\begin{proof}
The proof of the first point is similar to the one of Proposition \ref{AnalyticFunctionMomentDerivatives} and is left to the reader. As for the proof of the second point, it is much more complicated and is essentially based on the integral representation of moment derivatives of analytic functions at the origin of $\C^N$ (\cite[Prop. 3]{Mic13c}). We refer to \cite{Rsub} for more details.
\end{proof}

Observe that Proposition \ref{GevreyMomentDerivatives} does not say that the set $\DomGn{\sigma}$ is stable under the moment derivatives $\partial_{m_d;x_d}$, since we have a priori no control on the domain of analyticity of the function $\partial_{m_1;x_1}^{i_1}...\partial_{m_N;x_N}^{i_N}\TC{u}{j}(x)$. However, when we consider only regular moment functions (see Proposition \ref{AnalyticFunctionMomentDerivatives}), we can state the following.

\begin{corollary}\label{StabilityGevreyMomentDerivative}
Let $m_0,m_1,...,m_N$ be $N+1$ regular moment functions and $\sigma\geq0$. Then, the set $\DomGn{\sigma}$ is stable under the moment derivatives $\partial_{m_0;t}$ and $\partial_{m_d;x_d}$ for all $d=1,...,N$.
\end{corollary}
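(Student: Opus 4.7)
The plan is to read the corollary as verifying that two separate properties survive the moment derivatives: membership in $\Domn$ (holomorphy of every $t$-coefficient on the full polydisc $\DCn$) and the $\sigma$-Gevrey bound $\m{\TC{u}{j}(x)}\le CK^{j}\Gamma(1+\sigma j)$ on some smaller polydisc $\DCr$. Proposition \ref{GevreyMomentDerivatives} already handles the second property in general, so the novelty lies entirely in using the regularity hypothesis to preserve the first.

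For $\partial_{m_0;t}$, I would expand $\widetilde u=\dsum{j\ge 0}{}{\TC{u}{j}(x)t^j}$ and observe that the $j$-th $t$-coefficient of $\partial_{m_0;t}\widetilde u$ is simply $\frac{m_0(j+1)}{m_0(j)}\TC{u}{j+1}(x)$, which is still holomorphic on the full polydisc $\DCn$ since $\TC{u}{j+1}$ is. To recover a $\sigma$-Gevrey bound I would combine the original estimate on $\TC{u}{j+1}$ with the regularity inequality $m_0(j+1)/m_0(j)\le A(j+1)^{s_0}$ and the elementary fact, from Stirling, that $\Gamma(1+\sigma(j+1))/\Gamma(1+\sigma j)$ grows polynomially in $j$; the extra polynomial factors are absorbed into updated constants $C',K'$.

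For $\partial_{m_d;x_d}$ with $d\in\{1,\ldots,N\}$, the $j$-th $t$-coefficient becomes $\partial_{m_d;x_d}\TC{u}{j}(x)$. Here the regularity of $m_d$ is essential: applying Proposition \ref{AnalyticFunctionMomentDerivatives} to $\TC{u}{j}$ (with only the $d$-th spatial moment derivation turned on and all other orders zero) guarantees that $\partial_{m_d;x_d}\TC{u}{j}$ is again holomorphic on the whole polydisc $\DCn$, so $\partial_{m_d;x_d}\widetilde u\in\Domn$. The $\sigma$-Gevrey estimate on some (possibly smaller) sub-polydisc is then supplied verbatim by the second point of Proposition \ref{GevreyMomentDerivatives}, concluding that $\partial_{m_d;x_d}\widetilde u\in\DomGn{\sigma}$.

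The main obstacle is conceptual rather than technical: one must carefully distinguish the fixed polydisc $\DCn$ of holomorphy of the $t$-coefficients, required for membership in the ambient space $\Domn$, from the shrinkable sub-polydisc $\DCr$ on which the Gevrey estimate lives. Regularity of the moment functions is precisely the ingredient that prevents the former from shrinking under spatial moment differentiation; without it the corollary would fail, as noted in the remark immediately preceding the statement.
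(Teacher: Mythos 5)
Your proposal is correct and follows exactly the route the paper intends: the corollary is stated as an immediate consequence of combining Proposition \ref{GevreyMomentDerivatives} (the Gevrey estimates survive moment differentiation) with Proposition \ref{AnalyticFunctionMomentDerivatives} (regularity of the $m_d$ prevents the polydisc of analyticity of the coefficients from shrinking), which is precisely your decomposition into the two properties to be preserved. The explicit computation of the $t$-coefficients of $\partial_{m_0;t}\widetilde u$ and the absorption of the polynomial factors $m_0(j+1)/m_0(j)$ and $\Gamma(1+\sigma(j+1))/\Gamma(1+\sigma j)$ into updated constants is exactly the argument the paper leaves to the reader.
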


\section{Modified Nagumo norms}\label{section:nagumo}

In this section we introduce the concept of modified Nagumo norms, aiming to create a tool with properties similar to standard Nagumo norms (see \cite{Nagumo,C-DRSS}) that can be used in the framework of moment differential operators. Below we expand on the idea introduced in \cite{Su21}.

\begin{notation}\label{Not}
For any $\alpha\geq0$ and $s>0$, we consider the formal power series
$$\Theta_{\alpha,s}(x)=\dsum{j\geq0}{}{\dbinom{\alpha+j-1}{j}^s x^j}$$
with
$$\dbinom{\alpha+j-1}{j}=\dfrac{\Gamma(\alpha+j)}{\Gamma(1+j)\Gamma(\alpha)}=\begin{cases}1&\text{if }j=0\\
\dfrac{\alpha(\alpha+1)...(\alpha+j-1)}{j!}&\text{if }j\geq1\end{cases}.$$
In particular,
\begin{align*}
&\Theta_{0,s}(x)=1\quad\text{and}\\
&\Theta_{\alpha,1}(x)=1+\dsum{j\geq1}{}{\dfrac{\alpha(\alpha+1)...(\alpha+j-1)}{j!}x^j}=\dfrac{1}{(1-x)^\alpha}\ \textrm{ for }|x|<1. 
\end{align*}
\end{notation}

The definition of the modified Nagumo norms is based on the classical notion of \textit{majorant series}. Recall that a formal power series $$\widetilde{V}(x)=\dsum{j_1,...,j_N\geq0}{}{V_{j_1,...,j_N}x_1^{j_1}...x_N^{j_N}}\in\R^+[[x]]$$ is called \textit{a majorant series} of
$$\widetilde{v}(x)=\dsum{j_1,...,j_N\geq0}{}{v_{j_1,...,j_N}x_1^{j_1}...x_N^{j_N}}\in\C[[x]]$$
if $\m{v_{j_1,...,j_N}}\leq V_{j_1,...,j_N}$ for all $j_1,...,j_N\geq0$. In this case, we denote $\widetilde{v}(x)\ll\widetilde{V}(x)$.

\begin{definition}[Modified Nagumo norms]\label{ModifiedNagumoNorm}
Let $f(x)=\dsum{j_1,...,j_N\geq 0}{}{f_{j_1,...,j_N}x_1^{j_1}...x_N^{j_N}}\in\domn$ be an analytic function on a polydisc $\DCn$. Moreover, let $s=(s_1,...,s_N)\in(\R^*_+)^N$ and $\alpha=(\alpha_1,...,\alpha_N)\in[1,+\infty[^N\cup\{0\}$ and suppose that $0<r<\min(\rho_1,...,\rho_N)$. Then, the modified Nagumo norm $\MNN{f}{\alpha}{r}{s}$ of $f$ with indices $(\alpha,r,s)$ is defined by:
$$\MNN{f}{\alpha}{r}{s}=\begin{cases}
\dsum{j_1,...,j_N\geq0}{}{\m{f_{j_1,...,j_N}}r^{j_1+...+j_N}}&\text{if }\alpha=0\\
\inf\left(A\geq0:f(x)\ll A\dprod{d=1}{N}{\dfrac{1}{r^{\alpha_d}}\Theta_{\alpha_d, s_d}\left(\dfrac{x_d}{r}\right)}\right)&\text{otherwise}
\end{cases}.$$\label{defi:ModNagumoNorm}
\end{definition}

\begin{remark} The modified Nagumo norms are well defined for $\alpha\in[1,+\infty[^N$. To prove that this is indeed the case, firstly let us notice that every $f(x)\in\domn$ has a majorant series of the form $\dsum{j_1,...,j_N\geq 0}{}{|f_{j_1,...,j_N}|x_1^{j_1}...x_N^{j_N}}$. From this it follows that
\begin{multline*}
f(x)\ll r^{\lambda(\alpha)}\dsum{j_1,...,j_N\geq 0}{}{|f_{j_1,...,j_N}|\dfrac{x_1^{j_1}}{r^{\alpha_1}}...\dfrac{x_N^{j_N}}{r^{\alpha_N}}}\\\ll r^{\lambda(\alpha)}\sum_{j\ge 0}\dsum{\begin{subarray}{c}j_1,...,j_N\geq 0\\j_1+...+j_N=j\end{subarray}}{}{|f_{j_1,...,j_N}|\dfrac{x_1^{j_1}}{r^{\alpha_1}}...\dfrac{x_N^{j_N}}{r^{\alpha_N}}}
\end{multline*}
Furthermore, let us notice that 
\begin{equation}
\dbinom{\alpha_d+j-1}{j}^{s_d}\ge 1\ \textrm{for any}\ d=1,\ldots,N.\label{eqn:binomial}
\end{equation}
Moreover, seeing as $f(x)$ is an analytic function, every coefficient 
$|f_{j_1,...,j_N}|$ can be bounded from above by $Mr^{-(j_1+\ldots+j_N)}$ with $M=\sup_{|\xi|\le r}|f(\xi)|$. The conclusion follows directly from these facts.

Furthermore, let us observe that for $\alpha\in]0,1[^N$, inequality from (\ref{eqn:binomial}) fails. Indeed, the coefficient  $\dbinom{\alpha_d+j-1}{j}$ decreases when $n$ tends to infinity. Hence, for any fixed $\alpha_d\in]0,1[$ we have
$$\lim_{j\to+\infty}\dbinom{\alpha_d+j-1}{j}=0$$
from the Stirling's Formula. Consequently, the modified Nagumo norms cannot be defined  this way when $\alpha_d\in]0,1[$.
\end{remark}

\begin{proposition}\label{norm}
For fixed ($\alpha,r,s)$, the function $\MNN{f}{\alpha}{r}{s}:\domn\to\R^+$ defines a norm on $\domn$.
\end{proposition}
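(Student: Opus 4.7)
The plan is to verify the three norm axioms for the two cases of Definition~\ref{ModifiedNagumoNorm} separately. The case $\alpha=0$ reduces to a weighted $\ell^{1}$-norm on the Taylor coefficients of $f$; positivity, definiteness, positive homogeneity and the triangle inequality all follow at once from the corresponding properties of $\m{\cdot}$ on $\C$, so I would dispose of this case in a single sentence.

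For the substantive case $\alpha\in[1,+\infty)^{N}$, I would set $\Phi(x):=\dprod{d=1}{N}{r^{-\alpha_{d}}\Theta_{\alpha_{d},s_{d}}(x_{d}/r)}$ and introduce $\mathcal{A}(f):=\{A\ge 0 : f\ll A\Phi\}$, so that $\MNN{f}{\alpha}{r}{s}=\inf\mathcal{A}(f)$. The Remark preceding the proposition already shows that $\mathcal{A}(f)$ is non-empty, and the constraint $A\ge 0$ makes it bounded below, hence the infimum is a well-defined nonnegative real number.

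The one technical step that really requires work is showing that the infimum is \emph{attained}, i.e.\ that $\MNN{f}{\alpha}{r}{s}\in\mathcal{A}(f)$. I would prove this by picking a decreasing sequence $A_{n}\downarrow\MNN{f}{\alpha}{r}{s}$, unwinding the majorant relation $f\ll A_{n}\Phi$ as a coefficient-wise bound on $\m{f_{j_{1},\ldots,j_{N}}}$, and passing to the termwise limit in $n$. This attainment is exactly what is needed to secure definiteness, since $\MNN{f}{\alpha}{r}{s}=0$ then forces $f\ll 0\cdot\Phi=0$, whence all coefficients of $f$ vanish.

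With attainment in hand, the remaining axioms are formal consequences of the additivity and positive-scaling behaviour of $\ll$. Homogeneity follows from the equivalence $cf\ll A\Phi\iff f\ll (A/\m{c})\Phi$ for $c\in\C^{*}$ (the case $c=0$ being trivial), and the triangle inequality is obtained by adding the two majorant relations $f\ll\MNN{f}{\alpha}{r}{s}\Phi$ and $g\ll\MNN{g}{\alpha}{r}{s}\Phi$ produced by the attainment step, which yields $f+g\ll\bigl(\MNN{f}{\alpha}{r}{s}+\MNN{g}{\alpha}{r}{s}\bigr)\Phi$. I do not anticipate any obstacle beyond the attainment argument itself, and that argument is essentially a termwise passage to the limit once the definition is unpacked.
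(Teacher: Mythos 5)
Your proof is correct and follows essentially the same route as the paper: a direct verification of the three norm axioms from the coefficientwise definition of the majorant relation $\ll$. The only difference is one of rigor rather than strategy — you make explicit that the infimum in Definition~\ref{defi:ModNagumoNorm} is attained (the set of admissible $A$ is an intersection of closed half-lines), a point the paper treats as obvious when it asserts definiteness and derives the triangle inequality from $\m{f_{j_1,\ldots,j_N}+g_{j_1,\ldots,j_N}}\le\m{f_{j_1,\ldots,j_N}}+\m{g_{j_1,\ldots,j_N}}$.
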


\begin{proof} Let us fix $\alpha$, $r$ and $s$.
For any function $f$ obviously $\MNN{f}{\alpha}{r}{s}\ge 0$ and equality holds only for $f\equiv 0$. Moreover, for any constant $C$ equality $\MNN{Cf}{\alpha}{r}{s}=|C|\MNN{f}{\alpha}{r}{s}$ holds following from the definition of the majorant series. As such it remains to show that the triangle inequality holds for any two functions $f,g\in\domn$. To this end, notice that for functions $f(x)=\dsum{j_1,...,j_N\geq 0}{}{f_{j_1,...,j_N}x_1^{j_1}...x_N^{j_N}}$ and $g(x)=\dsum{j_1,...,j_N\geq0}{}{g_{j_1,...,j_N}x_1^{j_1}...x_N^{j_N}}$ we have $|f_{j_1,...,j_N}+g_{j_1,...,h_N}|\le|f_{j_1,...,j_N}|+|g_{j_1,...,j_N}|$ for every $j_1,...,j_N\geq0$. The conclusion follows directly from the definition of the majorant series.
\end{proof}

For the remainder of this section, we shall show that the modified Nagumo norms given in Definition \ref{defi:ModNagumoNorm} have properties similar to the classical Nagumo norms. To this end, results from \cite{Su21} will be adapted to the slightly more general case considered in this paper.

Let us start with two elementary technical lemmas.

\begin{lemma}
Let $\alpha,\beta\geq0$. Then, the following identity holds for all integer $j\geq0$:
$$\dsum{k=0}{j}{\dbinom{\alpha+k-1}{k}\dbinom{\beta+j-k-1}{n-k}}=\dbinom{\alpha+\beta+j-1}{j}.$$
\end{lemma}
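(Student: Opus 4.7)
The plan is to derive the identity from the generating function $\Theta_{\alpha,1}(x)=(1-x)^{-\alpha}$ recorded just before in Notation~\ref{Not}, via a Cauchy product.

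First, I would observe that for each fixed $j\geq 0$, both sides of the claimed identity are polynomials in $(\alpha,\beta)$ of total degree at most $j$, since each factor $\binom{\alpha+k-1}{k}=\alpha(\alpha+1)\cdots(\alpha+k-1)/k!$ is a polynomial in $\alpha$ of degree $k$. Consequently, it suffices to prove the identity for $\alpha,\beta>0$ real, the case $\alpha=0$ or $\beta=0$ being then obtained either by continuity or by direct inspection (if $\alpha=0$ the only surviving term in the sum is $k=0$, and the right-hand side collapses to $\binom{\beta+j-1}{j}$).

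Next, for $\alpha,\beta>0$ and $|x|<1$, I would multiply the two series
$$\Theta_{\alpha,1}(x)=\dsum{k\geq 0}{}{\dbinom{\alpha+k-1}{k}x^k},\qquad \Theta_{\beta,1}(x)=\dsum{\ell\geq 0}{}{\dbinom{\beta+\ell-1}{\ell}x^\ell}.$$
Using Notation~\ref{Not}, these are absolutely convergent with sums $(1-x)^{-\alpha}$ and $(1-x)^{-\beta}$ respectively, so the Cauchy product gives
$$\Theta_{\alpha,1}(x)\,\Theta_{\beta,1}(x)=\dfrac{1}{(1-x)^{\alpha+\beta}}=\Theta_{\alpha+\beta,1}(x)=\dsum{j\geq 0}{}{\dbinom{\alpha+\beta+j-1}{j}x^j}.$$
On the other hand, expanding the product via the Cauchy product formula yields
$$\Theta_{\alpha,1}(x)\,\Theta_{\beta,1}(x)=\dsum{j\geq 0}{}{\left(\dsum{k=0}{j}{\dbinom{\alpha+k-1}{k}\dbinom{\beta+j-k-1}{j-k}}\right)x^j}.$$
Identifying coefficients of $x^j$ in these two expansions yields the claimed identity.

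The argument is routine; the only minor subtlety is the reduction to the case $\alpha,\beta>0$ so that the two power series literally represent the analytic functions $(1-x)^{-\alpha}$ and $(1-x)^{-\beta}$, but this is handled at the outset by the polynomial nature of both sides in $(\alpha,\beta)$, so there is no real obstacle.
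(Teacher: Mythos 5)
Your proof is correct and follows essentially the same route as the paper's: both identify the coefficients of $x^j$ in the Cauchy product expansion of $(1-x)^{-\alpha}(1-x)^{-\beta}=(1-x)^{-(\alpha+\beta)}$. The only difference is that you explicitly justify the boundary cases $\alpha=0$ or $\beta=0$ via polynomiality in $(\alpha,\beta)$, a detail the paper leaves implicit.
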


\begin{proof}
It is sufficient to observe that the identity
$$\dfrac{1}{(1-x)^{\alpha+\beta}}=\dfrac{1}{(1-x)^\alpha}\times\dfrac{1}{(1-x)^\beta}$$
implies
\begin{align*}
\dsum{j\geq0}{}{\dbinom{\alpha+\beta+j-1}{j}x^j}&=\left(\dsum{j\geq0}{}{\dbinom{\alpha+j-1}{j}x^j}\right)\left(\dsum{j\geq0}{}{\dbinom{\beta+j-1}{j}x^j}\right)\\
&=\dsum{j\geq0}{}{\left(\dsum{k=0}{j}{\dbinom{\alpha+k-1}{k}\dbinom{\beta+j-k-1}{j-k}}\right)x^j}.
\end{align*}
\end{proof}

\begin{lemma}\label{Lemma}
Let $0\leq a\leq b$ and $\alpha\geq0$. Then, $\dbinom{a+\alpha}{a}\leq\dbinom{b+\alpha}{b}$.
\end{lemma}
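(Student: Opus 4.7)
The plan is to reduce the inequality to an elementary monotonicity statement for a telescoping product. Using the convention implicit in Notation \ref{Not}, namely
$$\binom{c+\alpha}{c}=\frac{\Gamma(c+\alpha+1)}{\Gamma(c+1)\Gamma(\alpha+1)},$$
I would first form the quotient
$$\frac{\binom{b+\alpha}{b}}{\binom{a+\alpha}{a}}=\frac{\Gamma(b+\alpha+1)\,\Gamma(a+1)}{\Gamma(b+1)\,\Gamma(a+\alpha+1)}.$$
Since the lemma will be applied to the coefficients of the series $\Theta_{\alpha,s}$, the values $a$ and $b$ that matter are nonnegative integers with $a\leq b$, so repeated use of the functional equation $\Gamma(x+1)=x\Gamma(x)$ collapses this quotient into the finite product
$$\prod_{k=a+1}^{b}\frac{k+\alpha}{k}.$$
Because $\alpha\geq 0$, every factor in this product satisfies $(k+\alpha)/k\geq 1$, so the product itself is $\geq 1$, which yields the claimed inequality $\binom{a+\alpha}{a}\leq\binom{b+\alpha}{b}$.

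Alternatively, one can run a one-line induction on $b-a\in\mathbb{N}$: the base case $b=a$ is trivial, and the inductive step is the identity
$$\binom{(b+1)+\alpha}{b+1}=\binom{b+\alpha}{b}\cdot\frac{b+\alpha+1}{b+1},$$
where the multiplicative factor is $\geq 1$ precisely because $\alpha\geq 0$. There is no genuine obstacle in either route; the statement is merely a monotonicity property of Pochhammer symbols in their upper argument, and the only point worth being careful about is fixing the convention used for $\binom{c+\alpha}{c}$ so that it matches the one in Notation \ref{Not}.
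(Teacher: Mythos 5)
Your argument is correct, but it is a genuinely different route from the paper's, and it proves slightly less than the lemma asserts. You treat $a$ and $b$ as nonnegative integers and reduce the claim to the telescoping product $\prod_{k=a+1}^{b}\frac{k+\alpha}{k}\geq 1$ (equivalently, the one-step identity $\binom{(b+1)+\alpha}{b+1}=\binom{b+\alpha}{b}\cdot\frac{b+\alpha+1}{b+1}$); this is elementary and entirely valid. The paper instead keeps $a,b$ as arbitrary reals with $0\leq a\leq b$ and differentiates $f_b(a)=\binom{a+\alpha}{a}$, obtaining $f_b'(a)=\binom{a+\alpha}{a}\bigl(\Psi(1+a+\alpha)-\Psi(1+a)\bigr)\geq 0$ from the monotonicity of $\Psi$ (i.e.\ the log-convexity of $\Gamma$), which establishes the inequality for all real $a\leq b$ as stated. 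Your restriction is harmless for every use of the lemma in this paper --- in Proposition \ref{NagumoNormProduct} the lower indices are always summation indices of the form $a=j-k\leq b=j$ --- and you are right to flag the convention from Notation \ref{Not}, which your formula $\binom{c+\alpha}{c}=\Gamma(c+\alpha+1)/(\Gamma(c+1)\Gamma(\alpha+1))$ matches. But since the statement reads ``Let $0\leq a\leq b$'' with no integrality assumption, a fully faithful proof needs an argument valid for non-integer $b-a$, e.g.\ the paper's digamma computation; your telescoping and induction both break down there. So: correct and more elementary on its domain of validity, but strictly narrower than the lemma as written.
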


\begin{proof}
The inequality is clear for $\alpha=0$ and for $a=b=0$. Let us now fix $\alpha,b>0$ and let us consider the function $f_b:a\in[0,b]\longmapsto\dbinom{a+\alpha}{a}$. Its derivative is given by
$$f'_b(a)=\dbinom{a+\alpha}{a}(\Psi(1+a+\alpha)-\Psi(1+a))$$
with is positive since $\Psi$ is an increasing function on $]0,+\infty[$ (the function $\ln\Gamma$ is convex on $]0,+\infty[$). Lemma \ref{Lemma} follows.
\end{proof}

In the first two results below, we are interested in the modified Nagumo norms of a product.

\begin{proposition}[Adaptation of \cite{Su21}, Lemma 2]\label{NagumoNormProduct}
Let $f(x),g(x)\in\domn$.\\
Let $s\in[1,+\infty[^N$.
Let $\alpha,\beta\in[1,+\infty[^N\cup\{0\}$ and $0<r<\min(\rho_1,...,\rho_N)$.\\
Then, $\MNN{fg}{\alpha+\beta}{r}{s}\leq\MNN{f}{\alpha}{r}{s}\MNN{g}{\beta}{r}{s}$.
\end{proposition}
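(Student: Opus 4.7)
The plan is to use the majorant-series description of the modified Nagumo norm and reduce the bound to a one-variable estimate between the $\Theta$-functions. I split according to whether $\alpha$ or $\beta$ is zero, the essential case being $\alpha,\beta\in[1,+\infty)^N$; the boundary cases fall out from the same coefficientwise comparisons.

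For the main case, writing $A=\MNN{f}{\alpha}{r}{s}$ and $B=\MNN{g}{\beta}{r}{s}$, the fact that $\ll$ is preserved under products gives
$$f(x)g(x)\ll AB\dprod{d=1}{N}{\dfrac{1}{r^{\alpha_d+\beta_d}}\Theta_{\alpha_d,s_d}\!\left(\dfrac{x_d}{r}\right)\Theta_{\beta_d,s_d}\!\left(\dfrac{x_d}{r}\right)},$$
so it suffices to prove, for each $d$, the one-variable majorization $\Theta_{\alpha_d,s_d}(y)\Theta_{\beta_d,s_d}(y)\ll\Theta_{\alpha_d+\beta_d,s_d}(y)$. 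Putting $u_k=\dbinom{\alpha_d+k-1}{k}\dbinom{\beta_d+j-k-1}{j-k}\geq0$, the preceding Vandermonde-type lemma yields $\dsum{k=0}{j}{u_k}=\dbinom{\alpha_d+\beta_d+j-1}{j}$, while the elementary inequality $\sum_k u_k^{s_d}\leq(\sum_k u_k)^{s_d}$---valid for $s_d\geq1$ and obtained by applying $v^{s_d}\leq v$ on $[0,1]$ to $v_k=u_k/\sum_l u_l$---delivers the coefficientwise estimate. Taking the infimum in $A,B$ produces $\MNN{fg}{\alpha+\beta}{r}{s}\leq\MNN{f}{\alpha}{r}{s}\MNN{g}{\beta}{r}{s}$.

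The case $\alpha=\beta=0$ is immediate from the Cauchy product formula. For $\alpha=0$ and $\beta\in[1,+\infty)^N$, the opposite case being symmetric, I would use $fg\ll|f|(x)\cdot B\dprod{d=1}{N}{\dfrac{1}{r^{\beta_d}}\Theta_{\beta_d,s_d}(x_d/r)}$ where $|f|(x)=\sum_J|f_J|x^J$, and compare, term by term in $x^J$, with $\MNN{f}{0}{r}{s}\cdot B\dprod{d=1}{N}{\dfrac{1}{r^{\beta_d}}\Theta_{\beta_d,s_d}(x_d/r)}$. After restricting the right-hand side sum to multi-indices $K\leq J$, the reduction boils down to the componentwise inequality $\dbinom{\beta_d+J_d-K_d-1}{J_d-K_d}\leq\dbinom{\beta_d+J_d-1}{J_d}$ for $0\leq K_d\leq J_d$, which is exactly Lemma~\ref{Lemma} applied in each coordinate with parameter $\beta_d-1\geq0$.

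The main obstacle is the collapse of the convolution $\Theta_{\alpha_d,s_d}\Theta_{\beta_d,s_d}$ into the single series $\Theta_{\alpha_d+\beta_d,s_d}$ at the coefficient level; it is resolved by combining the Vandermonde-type identity with the power-sum inequality $\sum u_k^{s_d}\leq(\sum u_k)^{s_d}$, which is the only place where the hypothesis $s\in[1,+\infty)^N$ is genuinely used.
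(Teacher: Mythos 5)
Your proof is correct and takes essentially the same route as the paper's: the product of the $\Theta$-majorants, the Vandermonde-type identity for the convolution, and the power-sum inequality $\sum_k u_k^{s_d}\leq\bigl(\sum_k u_k\bigr)^{s_d}$ (the only place where $s\in[1,+\infty[^N$ is used) in the main case, together with the monotonicity Lemma~\ref{Lemma} for the mixed case $\alpha=0$, $\beta\in[1,+\infty[^N$. Nothing to add.
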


\begin{proof}
First let us consider $(\alpha,\beta)\ne (0,0)$. Then
\begin{equation*}
f(x)\ll\frac{\MNN{f}{r}{s}{\alpha}}{r^{\lambda(\alpha)}}\prod_{d=1}^{N}\sum_{j\geq0}\binom{\alpha_{d}+j-1}{j}^{s_{d}}\frac{x_{d}^{j}}{r^{j}}
\end{equation*}
and
\begin{equation*}
g(x)\ll\frac{\MNN{g}{r}{s}{\beta}}{r^{\lambda(\beta)}}\prod_{d=1}^{N}\sum_{j\geq0}\binom{\beta_{d}+j-1}{j}^{s_{d}}\frac{x_{d}^{j}}{r^{j}}.
\end{equation*}
From this and the definition of the majorant series, it follows that 
\begin{equation*}
 f(x)g(x)\ll\dfrac{\MNN{f}{r}{s}{\alpha}\MNN{g}{r}{s}{\beta}}{r^{\lambda(\alpha)+\lambda(\beta)}}
 \prod_{d=1}^{N}\sum_{j\geq0}\left(\sum_{k=0}^{j}\binom{\alpha_{d}+k-1}{k}^{s_{d}}\binom{\beta_{d}+j-k-1}{j-k}^{s_{d}}\right)\frac{x_{d}^{j}}{r^{j}}.
\end{equation*}
Let us now notice that $a^s+b^s\le (a+b)^s$ for any $a,b>0$ and $s\ge 1$. Hence, for any $j\ge 0$ and $d=1,\ldots N$, we get
$$
\sum_{k=0}^{j}\binom{\alpha_{d}+k-1}{k}^{s_{d}}\binom{\beta_{d}+j-k-1}{j-k}^{s_{d}}\le\left(\sum_{k=0}^{j}\binom{\alpha_{d}+k-1}{k}\binom{\beta_{d}+j-k-1}{j-k}\right)^{s_d}.
$$
Using this fact and Lemma \ref{Lemma}, we conclude that
\begin{equation*}
 f(x)g(x)\ll\dfrac{\MNN{f}{r}{s}{\alpha}\MNN{g}{r}{s}{\beta}}{r^{\lambda(\alpha+\beta)}}
 \prod_{d=1}^{N}\sum_{j\geq0}\binom{\alpha_{d}+\beta_d+j-1}{j}^{s_{d}}\frac{x_{d}^{j}}{r^{j}};
\end{equation*}
hence, $\MNN{fg}{\alpha+\beta}{r}{s}\leq\MNN{f}{\alpha}{r}{s}\MNN{g}{\beta}{r}{s}$ thanks to Definition \ref{ModifiedNagumoNorm}.

Now, let us suppose that $\alpha=0$ and $\beta\in[1,+\infty[^N$. Then
\begin{multline*}
 f(x)g(x)\ll\left(\dsum{j_1,...,j_N\geq0}{}{\m{f_{j_1,...,j_N}}r^{j_1+...+j_N}\prod_{d=1}^N\frac{x^{i_d}}{r^{i_d}}}\right)\times\\\left(\dfrac{\MNN{g}{r}{s}{\beta}}{r^{\lambda(\beta)}}
 \prod_{d=1}^{N}\sum_{j\geq0}\binom{\beta_{d}+j-1}{j}^{s_{d}}\frac{x_{d}^{j}}{r^{j}}\right)
\end{multline*}
and the conclusion follows from Lemma \ref{Lemma}. Of course, the same holds when $\alpha\in[1,+\infty[^N$ and $\beta=0$.

For $\alpha=\beta=0$, the inequality is obviously true.
\end{proof}

\begin{remark}
 Note that the assumption \textquotedblleft$s_d\ge 1$ for all $d=1,\ldots,N$\textquotedblright\ is necessary for the proposition above to hold true. Otherwise, it is not possible to use inequalities of the form $a^s+b^s\le (a+b)^s$.
\end{remark}

Considering in particular the case $g(x)=1$, we can easily derive from Proposition \ref{NagumoNormProduct} the following. 

\begin{corollary}\label{NagumoNormProductWithOne}
 For all $\alpha\in[1,+\infty[^N\cup\{0\}$ and $\beta\in[1,+\infty[^N$, we have $\MNN{f}{\alpha+\beta}{r}{s}\le r^{\lambda(\beta)}\MNN{f}{\alpha}{r}{s}$.
\end{corollary}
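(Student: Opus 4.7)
The plan is to apply Proposition \ref{NagumoNormProduct} with the specific choice $g(x)=1$ and reduce the statement to computing the single quantity $\MNN{1}{\beta}{r}{s}$. The proposition gives directly
\[
\MNN{f\cdot 1}{\alpha+\beta}{r}{s}\leq \MNN{f}{\alpha}{r}{s}\,\MNN{1}{\beta}{r}{s},
\]
so the entire content of the corollary is to show that $\MNN{1}{\beta}{r}{s}=r^{\lambda(\beta)}$.

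To evaluate $\MNN{1}{\beta}{r}{s}$, I unfold Definition \ref{ModifiedNagumoNorm}: since $\beta\in[1,+\infty[^N$ I need the infimum of those $A\geq 0$ for which
\[
1\ll A\prod_{d=1}^{N}\frac{1}{r^{\beta_d}}\,\Theta_{\beta_d,s_d}\!\left(\frac{x_d}{r}\right).
\]
Because $\Theta_{\beta_d,s_d}(0)=\binom{\beta_d-1}{0}^{s_d}=1$, the constant term of the right-hand side equals $A/r^{\lambda(\beta)}$. Since all remaining coefficients of $\Theta_{\beta_d,s_d}(x_d/r)$ are nonnegative, the majorant relation $1\ll(\cdot)$ reduces to the single scalar inequality $A/r^{\lambda(\beta)}\geq 1$, and taking $A=r^{\lambda(\beta)}$ achieves equality on the constant term while keeping all higher coefficients on the right nonnegative. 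Hence $\MNN{1}{\beta}{r}{s}=r^{\lambda(\beta)}$.

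Combining the two steps yields $\MNN{f}{\alpha+\beta}{r}{s}\leq r^{\lambda(\beta)}\MNN{f}{\alpha}{r}{s}$, as required. There is no real obstacle here: the only mild point of care is checking that Proposition \ref{NagumoNormProduct} remains applicable when one of its arguments is the constant $\alpha=0$ element (covered by the last paragraph of its proof) and in the trivial boundary case $\alpha=0$ with $\beta\in[1,+\infty[^N$, which is exactly the situation we are exploiting.
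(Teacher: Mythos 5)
Your proposal is correct and is exactly the route the paper intends: it states the corollary as an immediate consequence of Proposition \ref{NagumoNormProduct} with $g(x)=1$, and your computation $\MNN{1}{\beta}{r}{s}=r^{\lambda(\beta)}$ (via the constant term $\Theta_{\beta_d,s_d}(0)=1$ and the nonnegativity of the remaining coefficients) is precisely the missing evaluation. Nothing to add.
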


The following two results show the action of the moment derivatives on the modified Nagumo norms.

\begin{proposition}[\cite{Su21}, Lemma 4]
Let $f(x)\in\domn$.\\
Let $s\in(\R^*_+)^N$, $\alpha\in[1,+\infty[^N$ and $0<r<\min(\rho_1,...,\rho_N)$.\\
Let $e_d\in(\R^+)^N$ be the multi-index with a $1$ in the $d$-th coordinate and zeros everywhere else.\\
Let $m_d$ be a regular moment function of order $s_d$. Then, there exists a positive constant $A>0$ such that
$$\MNN{\partial_{m_d;x_d}f}{\alpha+e_d}{r}{s}\leq C\alpha_d^{s_d}\MNN{f}{\alpha}{r}{s}$$
\end{proposition}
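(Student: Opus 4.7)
The plan is to work coefficient-wise, comparing the monomial expansion of $f$ with that of $\partial_{m_d;x_d}f$, and then repackaging the resulting estimates into the majorant form required by the modified Nagumo norm $\MNN{\cdot}{\alpha+e_d}{r}{s}$.

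First I would write $f(x)=\sum f_{j_1,\ldots,j_N}x_1^{j_1}\cdots x_N^{j_N}$ in ordinary coordinates and translate it into the $m$-adapted form $f(x)=\sum a_{j_1,\ldots,j_N}\prod_e x_e^{j_e}/m_e(j_e)$ with $a_{j_1,\ldots,j_N}=f_{j_1,\ldots,j_N}\prod_e m_e(j_e)$. A direct application of Definition \ref{MomentDifferentiationOperator} in the variable $x_d$ then gives that the monomial coefficient of $x_1^{j_1}\cdots x_N^{j_N}$ in $\partial_{m_d;x_d}f$ is exactly $\frac{m_d(j_d+1)}{m_d(j_d)}\,f_{j_1,\ldots,j_d+1,\ldots,j_N}$. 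Since $m_d$ is assumed regular of order $s_d$, Definition \ref{defi:moment_regular} yields a positive constant $A_d$ with $m_d(j_d+1)/m_d(j_d)\le A_d(j_d+1)^{s_d}$.

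Next I would unfold the majorant inequality $f(x)\ll\MNN{f}{\alpha}{r}{s}\prod_{e=1}^N r^{-\alpha_e}\Theta_{\alpha_e,s_e}(x_e/r)$. Using Notation \ref{Not} this means $|f_{j_1,\ldots,j_N}|\le\MNN{f}{\alpha}{r}{s}\,r^{-\lambda(\alpha)-(j_1+\cdots+j_N)}\prod_{e=1}^N\binom{\alpha_e+j_e-1}{j_e}^{s_e}$. Substituting $(j_d+1)$ for $j_d$ in this estimate and combining with Step 1 shows that
\begin{equation*}
|(\partial_{m_d;x_d}f)_{j_1,\ldots,j_N}|\le A_d(j_d+1)^{s_d}\,\frac{\MNN{f}{\alpha}{r}{s}}{r^{\lambda(\alpha)+1+\sum_e j_e}}\,\binom{\alpha_d+j_d}{j_d+1}^{s_d}\prod_{e\ne d}\binom{\alpha_e+j_e-1}{j_e}^{s_e}.
\end{equation*}

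The decisive algebraic step is the identity
\begin{equation*}
\binom{\alpha_d+j_d}{j_d+1}=\frac{\Gamma(\alpha_d+j_d+1)}{\Gamma(j_d+2)\Gamma(\alpha_d)}=\frac{\alpha_d}{j_d+1}\binom{\alpha_d+j_d}{j_d},
\end{equation*}
which follows from the definition of $\binom{\cdot}{\cdot}$ in Notation \ref{Not}. Raising to the power $s_d$ and multiplying by the factor $(j_d+1)^{s_d}$ produced by the regularity of $m_d$ makes the $(j_d+1)$ factors cancel exactly, leaving $A_d\,\alpha_d^{s_d}\binom{\alpha_d+j_d}{j_d}^{s_d}=A_d\,\alpha_d^{s_d}\binom{(\alpha_d+1)+j_d-1}{j_d}^{s_d}$. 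Plugging this back in and noting that $\lambda(\alpha)+1=\lambda(\alpha+e_d)$, one recognises the coefficients of the majorant $A_d\alpha_d^{s_d}\MNN{f}{\alpha}{r}{s}\prod_e r^{-(\alpha_e+\delta_{ed})}\Theta_{\alpha_e+\delta_{ed},s_e}(x_e/r)$. By Definition \ref{ModifiedNagumoNorm} this gives $\MNN{\partial_{m_d;x_d}f}{\alpha+e_d}{r}{s}\le A_d\,\alpha_d^{s_d}\MNN{f}{\alpha}{r}{s}$, with $C=A_d$.

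The proof is essentially bookkeeping, so I do not expect a serious obstacle; the only delicate point is the precise reshuffling at Step 3, namely checking that the factor $(j_d+1)^{s_d}$ coming from the regularity of $m_d$ combines with the binomial identity to produce the clean factor $\alpha_d^{s_d}$ — if one overlooks the shift between $\binom{\alpha_d+j_d}{j_d+1}$ and $\binom{(\alpha_d+1)+j_d-1}{j_d}$ one obtains a spurious dependence on $j_d$. Observe that the proposition requires no lower bound $s_d\ge 1$ (in contrast to Proposition \ref{NagumoNormProduct}), since only the upper bound part of the regularity of $m_d$ is invoked.
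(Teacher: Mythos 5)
Your proof is correct and follows essentially the same route as the paper's: both extract the coefficient relation $(\partial_{m_d;x_d}f)_{j}=\tfrac{m_d(j_d+1)}{m_d(j_d)}f_{j_1,\dots,j_d+1,\dots,j_N}$, bound the moment ratio by $A_d(j_d+1)^{s_d}$ via regularity, and cancel the $(j_d+1)^{s_d}$ factor against the identity $\binom{\alpha_d+j_d}{j_d+1}=\tfrac{\alpha_d}{j_d+1}\binom{(\alpha_d+1)+j_d-1}{j_d}$ to repackage the majorant with index $\alpha+e_d$. Your closing observations (the clean cancellation being the one delicate point, and the absence of any $s_d\ge1$ hypothesis) are both accurate.
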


\begin{proof}
Since $m_d$ is a regular moment function, there exist positve constants $a,A>0$ such that
 $$
 a(j+1)^s\le\frac{m_d(j+1)}{m_d(j)}\le A(j+1)^s\quad\textrm{for every}\quad j\in\Na.
 $$
 Moreover, let us notice that 
 \begin{align*}
 \partial_{m_d;x_d}f(x)&\ll\dfrac{\MNN{f}{\alpha}{r}{s}}{r^{\lambda(\alpha)}}{\dsum{j\geq 0}{}\dbinom{\alpha_d+j}{j+1}^{s_d} \frac{m_d(j+1)}{m_d(j)}\frac{x_d^j}{r^{j+1}}\dprod{i\ne d}{}{\dbinom{\alpha_i+j-1}{j}^{s_i} \frac{x_i^j}{r^j}}}\\
 &\ll\dfrac{A\MNN{f}{\alpha}{r}{s}}{r^{\lambda(\alpha)+1}}{\dsum{j\geq 0}{}\dbinom{\alpha_d+j}{j+1}^{s_d} (j+1)^{s_d}\frac{x_d^j}{r^{j}}\dprod{i\ne d}{}{\dbinom{\alpha_i+j-1}{j}^{s_i} \frac{x_i^j}{r^j}}}.
\end{align*}
Since
\begin{align*}
\dbinom{\alpha_d+j}{j+1}&=\frac{\Gamma(\alpha_d+j+1)}{\Gamma(2+j)\Gamma(\alpha_d)}\\
&=\frac{\alpha_d\Gamma(\alpha_d+j+1)}{(j+1)\Gamma(1+j)\Gamma(\alpha_d+1)}=\frac{\alpha_d}{j+1}\dbinom{\alpha_d+1+j-1}{j},
\end{align*}
we receive
$$
\partial_{m_d;x_d}f(x)\ll\dfrac{A\alpha_d^{s_d}\MNN{f}{\alpha}{r}{s}}{r^{\lambda(\alpha)+1}}{\dsum{j\geq 0}{}\dbinom{\alpha_d+1+j-1}{j}^{s_d}\frac{x_d^j}{r^{j}}\dprod{i\ne d}{}{\dbinom{\alpha_i+j-1}{j}^{s_i} \frac{x_i^j}{r^j}}}.
$$
and the conclusion follows.
\end{proof}

\begin{corollary}\label{NagumoNormDerivative}
Assume that $m_1,...,m_N$ are all regular moment functions. Then, for all $\alpha\in[1,+\infty[^N$ and all $q\in\Na^N$, there exists a positive constant $C>0$ such that
$$\MNN{\partial_{m;x}^qf}{\alpha+q}{r}{s}\leq C^{\lambda(q)}\left(\dprod{d=1}{N}{q_d!^{s_d}\dbinom{\alpha_d+q_d-1}{q_d}^{s_d}}\right)\MNN{f}{\alpha}{r}{s}.$$
\end{corollary}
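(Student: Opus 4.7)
The plan is to obtain the multi-index bound by iterating the single-variable estimate from the preceding proposition, exploiting the fact that the moment derivatives $\partial_{m_d;x_d}$ commute with one another (as noted immediately after Definition \ref{MomentDifferentiationOperator}). Because of this commutativity, I can write
$$\partial_{m;x}^q f = \partial_{m_1;x_1}^{q_1}\partial_{m_2;x_2}^{q_2}\cdots\partial_{m_N;x_N}^{q_N} f$$
and bound the Nagumo norm one variable and one derivative at a time.

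First I would fix $d\in\{1,\ldots,N\}$ and apply the previous proposition $q_d$ times successively. At the $k$-th application ($k=1,\ldots,q_d$), the multi-index on the left moves from $\alpha+(k-1)e_d$ to $\alpha+ke_d$, and the estimate picks up the factor $C(\alpha_d+k-1)^{s_d}$ (using the same constant $C$ from the proposition, since at each step only the $d$-th coordinate of the multi-index is being incremented). Telescoping this chain of $q_d$ inequalities gives
$$\MNN{\partial_{m_d;x_d}^{q_d} f}{\alpha+q_d e_d}{r}{s}\leq C^{q_d}\left(\dprod{k=1}{q_d}{(\alpha_d+k-1)}\right)^{s_d}\MNN{f}{\alpha}{r}{s},$$
and the product simplifies via $\alpha_d(\alpha_d+1)\cdots(\alpha_d+q_d-1)=q_d!\dbinom{\alpha_d+q_d-1}{q_d}$.

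Next I would chain these $N$ single-variable estimates together. Starting from $f$, apply the $x_1$-chain to bound $\MNN{\partial_{m_1;x_1}^{q_1} f}{\alpha+q_1 e_1}{r}{s}$; then, regarding $\partial_{m_1;x_1}^{q_1} f$ as the new input, apply the $x_2$-chain with starting multi-index $\alpha+q_1 e_1\in[1,+\infty[^N$, which remains admissible since each coordinate is at least $1$; and so on for $d=3,\ldots,N$. Multiplying the resulting $N$ inequalities and collecting constants produces the bound
$$\MNN{\partial_{m;x}^q f}{\alpha+q}{r}{s}\leq C^{\lambda(q)}\left(\dprod{d=1}{N}{q_d!^{s_d}\dbinom{\alpha_d+q_d-1}{q_d}^{s_d}}\right)\MNN{f}{\alpha}{r}{s},$$
with $C$ chosen as the maximum of the $N$ constants furnished by the proposition (one for each variable).

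I do not anticipate a serious obstacle here; the corollary is essentially a bookkeeping iteration of the proposition. The only point that requires slight care is to verify that after each partial application the intermediate multi-index $\alpha+(q_1,\ldots,q_{d-1},0,\ldots,0)$ still lies in $[1,+\infty[^N$ so that the proposition may be invoked again, and to identify the rising factorial $\prod_{k=1}^{q_d}(\alpha_d+k-1)$ with $q_d!\dbinom{\alpha_d+q_d-1}{q_d}$ in order to recast the constant in the stated binomial-coefficient form.
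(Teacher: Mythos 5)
Your proof is correct and is exactly the iteration the paper intends: the corollary is stated without proof as an immediate consequence of the preceding proposition, and your telescoping over each variable, together with the identity $\prod_{k=1}^{q_d}(\alpha_d+k-1)=q_d!\binom{\alpha_d+q_d-1}{q_d}$ and the observation that the intermediate multi-indices stay in $[1,+\infty[^N$, fills in the omitted details faithfully. The only point worth adding explicitly is that the intermediate functions $\partial_{m_1;x_1}^{q_1}\cdots\partial_{m_{d-1};x_{d-1}}^{q_{d-1}}f$ remain in $\domn$ (by Proposition \ref{AnalyticFunctionMomentDerivatives}, using the regularity of the $m_d$), so the proposition may legitimately be applied at each step.
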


Note that if $q_d=0$, then the corresponding term in the product is $1$ (see Notation \ref{Not}). In particular, this inequality remains valid when $q=0$.

The last two properties will enable us to link the modified Nagumo norms with the concept of Gevrey order of a formal power series.

\begin{proposition}\label{prop:gevrey_norm}
Let $$\widetilde{u}(t,x)=\dsum{j\geq0}{}{\TC{u}{j}(x)t^j}\in\DomGn{\sigma}$$ a $\sigma$-Gevrey formal power series for a certain
$\sigma\geq 0$. Let $0<r<\min\{\rho_1,\ldots,\rho_N\}$ as in Definiton \ref{defi:gevrey-order}. Then, for all $\alpha\in[1,+\infty[^{N}\cup\{0\}$ and all $s\in(\R^*_+)^N$, there exist two positive constants $A,B>0$ such that the following inequality holds for all $j\geq0$:
\begin{equation*}
\MNN{\TC{u}{j}}{j\alpha}{r}{s}\leq AB^{j}\Gamma(1+\sigma j).
\end{equation*}
\end{proposition}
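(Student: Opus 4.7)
The plan is to extract from the Gevrey hypothesis a coefficient-wise bound on each $u_{j,*}(x)$ via Cauchy's estimates, and then read off the modified Nagumo norm directly from its defining majorant inequality. The only wrinkle is that the Nagumo norm is required at the \emph{same} radius $r$ that appears in the Gevrey estimate, so I need to exploit the usual freedom in the choice of Gevrey radius.

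First, using Definition \ref{defi:gevrey-order}, I may pick a radius $R$ with $r<R<\min(\rho_1,\ldots,\rho_N)$ for which the Gevrey estimate $|u_{j,*}(x)|\le CK^j\Gamma(1+\sigma j)$ still holds on $D_{R,\ldots,R}$ (up to enlarging the constants $C,K$; this is legitimate since $\sigma$-Gevreyness of $\widetilde u$ is an assumption independent of the specific radius). Writing $u_{j,*}(x)=\sum_{j_1,\ldots,j_N\ge 0}u_{j,j_1,\ldots,j_N}x_1^{j_1}\cdots x_N^{j_N}$, the Cauchy inequalities on $D_{R,\ldots,R}$ give
\[
|u_{j,j_1,\ldots,j_N}|\le\dfrac{CK^j\Gamma(1+\sigma j)}{R^{j_1+\ldots+j_N}}\quad\text{for all }j,j_1,\ldots,j_N\ge 0.
\]

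Next, I unfold Definition \ref{defi:ModNagumoNorm}. Assume first $\alpha\in[1,+\infty[^N$ and $j\ge 1$, so $j\alpha\in[1,+\infty[^N$. The inequality $u_{j,*}(x)\ll A\prod_{d=1}^{N}r^{-j\alpha_d}\Theta_{j\alpha_d,s_d}(x_d/r)$ is equivalent to
\[
|u_{j,j_1,\ldots,j_N}|\le A\,r^{-j\lambda(\alpha)-(j_1+\ldots+j_N)}\dprod{d=1}{N}{\dbinom{j\alpha_d+j_d-1}{j_d}^{s_d}}
\]
for every multi-index $(j_1,\ldots,j_N)$. Plugging in the Cauchy bound and rearranging, the smallest admissible $A$ satisfies
\[
\MNN{u_{j,*}}{j\alpha}{r}{s}\le CK^j\Gamma(1+\sigma j)\,r^{j\lambda(\alpha)}\sup_{(j_1,\ldots,j_N)}\dprod{d=1}{N}{\dfrac{(r/R)^{j_d}}{\dbinom{j\alpha_d+j_d-1}{j_d}^{s_d}}}.
\]
The key point is that the supremum is bounded by $1$: since $j\alpha_d\ge 1$, each binomial coefficient is $\ge 1$, and $r/R<1$ makes each factor at most $1$. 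Hence $\MNN{u_{j,*}}{j\alpha}{r}{s}\le C\bigl(Kr^{\lambda(\alpha)}\bigr)^{j}\Gamma(1+\sigma j)$, which is the claimed inequality with $A=C$ and $B=Kr^{\lambda(\alpha)}$.

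It remains to treat the boundary cases covered by the first branch of Definition \ref{defi:ModNagumoNorm}, namely $\alpha=0$ (any $j$), or $\alpha\neq 0$ and $j=0$. In both, $j\alpha=0$, so the norm equals $\sum_{j_1,\ldots,j_N\ge 0}|u_{j,j_1,\ldots,j_N}|r^{j_1+\ldots+j_N}$, and the Cauchy bound yields
\[
\MNN{u_{j,*}}{0}{r}{s}\le CK^j\Gamma(1+\sigma j)\dsum{j_1,\ldots,j_N\ge 0}{}{(r/R)^{j_1+\ldots+j_N}}=\dfrac{CK^j\Gamma(1+\sigma j)}{(1-r/R)^{N}},
\]
again of the desired form. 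Combining both regimes (and enlarging $A,B$ if necessary so that $B\ge K$ and $A$ absorbs $C/(1-r/R)^N$) yields a single pair of constants $A,B>0$ valid for all $j\ge 0$.

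The only real obstacle is the uniform control of the supremum over $(j_1,\ldots,j_N)$; this is why I must insist on working with the Gevrey bound at a strictly larger radius $R>r$, so that the geometric factor $(r/R)^{j_d}$ dominates any growth of the reciprocal binomial coefficients. All the remaining steps are bookkeeping from the definitions.
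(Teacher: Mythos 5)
Your overall strategy (Cauchy estimates on the coefficients of $\TC{u}{j}$, then reading off the infimum in Definition \ref{defi:ModNagumoNorm} using $\dbinom{j\alpha_d+j_d-1}{j_d}\geq 1$) is exactly the intended one, but your very first step contains a genuine error. You claim that the Gevrey estimate $\m{\TC{u}{j}(x)}\leq CK^j\Gamma(1+\sigma j)$, valid on $D_{r,\ldots,r}$, "still holds" on a strictly larger polydisc $D_{R,\ldots,R}$ with $r<R<\min(\rho_1,\ldots,\rho_N)$ up to enlarging constants, "since $\sigma$-Gevreyness is independent of the specific radius." This is false: Definition \ref{defi:gevrey-order} guarantees the uniform estimate on \emph{some} polydisc, and such estimates propagate to \emph{smaller} radii, never to larger ones. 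For instance, with $N=1$, the series with $\TC{u}{j}(x)=K^j\Gamma(1+\sigma j)(x/r)^{n_j}$ is $\sigma$-Gevrey with radius exactly $r$, yet $\sup_{D_R}\m{\TC{u}{j}}=K^j\Gamma(1+\sigma j)(R/r)^{n_j}$, which for rapidly growing $n_j$ is not $O(C'K'^j\Gamma(1+\sigma j))$ for any constants. So the Cauchy bound $\m{u_{j,j_1,\ldots,j_N}}\leq CK^j\Gamma(1+\sigma j)R^{-(j_1+\ldots+j_N)}$ you use is not available.

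The damage is uneven. In the main case $j\alpha\in[1,+\infty[^N$ your computation survives verbatim with $R=r$: the supremum is still $\leq 1$ because each binomial coefficient is $\geq 1$, and the geometric factor $(r/R)^{j_d}$ plays no role; your closing remark that the strict inequality $R>r$ is "the only real obstacle" here is therefore a misdiagnosis. In the branch $j\alpha=0$, however, the strict decay is essential (with $R=r$ your geometric series becomes $\sum 1=+\infty$), and it cannot be obtained by enlarging the Gevrey radius. The correct repair is in the opposite direction: either invoke the standard convention that $r$ may be replaced at the outset by any strictly smaller radius $r'<r$ (apply the Cauchy inequalities at radius $r$ and evaluate the norm at $r'$, which yields the factor $(1-r'/r)^{-N}$), or observe that in this paper the $\alpha=0$ branch is only needed for $j=0$ and for the finitely many initial data, where no uniformity in $j$ is at stake. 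As written, your argument for the $\alpha=0$ branch rests entirely on the unjustified enlargement of the radius and does not go through.
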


\begin{proof}
 The proof is identical to the one presented in \cite{Su21} and follows directly from Definition \ref{defi:gevrey-order} and the Cauchy formula.
\end{proof}

\begin{proposition}\label{prop:sup_norm}
Let $0<\rho<r<\min(\rho_1,\ldots,\rho_N)$. Then, there exists a positive constant $A>0$ such that, for all $f(x)\in\domn$ and all $\alpha\in[1,+\infty[^{N}\cup\left\{0\right\}$, the following inequality holds for all $x\in D_{\rho,...,\rho}$:
\begin{equation*}
\m{f(x)}\leq A^{\lambda(\alpha)}\MNN{f}{\alpha}{r}{s}.
\end{equation*}
\end{proposition}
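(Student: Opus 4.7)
The plan is to expand the modified Nagumo norm into its defining majorant, evaluate that majorant on the closed polydisc $\overline{D}_{\rho,\ldots,\rho}$, and then control each resulting factor $r^{-\alpha_d}\Theta_{\alpha_d,s_d}(\rho/r)$ by an exponential in $\alpha_d$. The case $\alpha=0$ is immediate from Definition \ref{defi:ModNagumoNorm}: for $x\in D_{\rho,\ldots,\rho}$ with $\rho<r$, $|f(x)|\le\sum|f_{j_1,\ldots,j_N}|\rho^{j_1+\ldots+j_N}\le\MNN{f}{0}{r}{s}$, matching $A^{\lambda(0)}=1$. For $\alpha\in[1,+\infty[^N$, Definition \ref{defi:ModNagumoNorm} together with the non-negativity of the coefficients of each $\Theta_{\alpha_d,s_d}$ gives, for every $x\in D_{\rho,\ldots,\rho}$,
\begin{equation*}
|f(x)|\le \MNN{f}{\alpha}{r}{s}\prod_{d=1}^{N}\frac{1}{r^{\alpha_d}}\Theta_{\alpha_d,s_d}\!\left(\frac{\rho}{r}\right).
\end{equation*}
It thus suffices to show that, for $q:=\rho/r\in(0,1)$ and any $s>0$, there exists a constant $M=M(q,s)$ with $\Theta_{\alpha,s}(q)\le M^{\alpha}$ uniformly for $\alpha\ge 1$.

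The heart of the argument is this exponential-in-$\alpha$ estimate. I would first reduce to an integer exponent: because $\alpha\ge 1$ forces $\binom{\alpha+j-1}{j}\ge 1$ for all $j\ge 0$, one has $\binom{\alpha+j-1}{j}^{s}\le\binom{\alpha+j-1}{j}^{\nu}$ with $\nu:=\lceil s\rceil$, hence $\Theta_{\alpha,s}(q)\le\Theta_{\alpha,\nu}(q)$. Next, comparing coefficients in the formal identity $(1-x)^{-\nu\alpha}=((1-x)^{-\alpha})^{\nu}$ yields the convolution relation
\begin{equation*}
\binom{\nu\alpha+n-1}{n}=\sum_{k_1+\ldots+k_\nu=n}\prod_{i=1}^{\nu}\binom{\alpha+k_i-1}{k_i},
\end{equation*}
from which, by retaining only the diagonal term $k_1=\ldots=k_\nu=j$, $n=\nu j$, we obtain $\binom{\alpha+j-1}{j}^{\nu}\le\binom{\nu\alpha+\nu j-1}{\nu j}$. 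Writing $q=(q^{1/\nu})^{\nu}$ and enlarging the index set from multiples of $\nu$ to all non-negative integers then yields
\begin{equation*}
\Theta_{\alpha,\nu}(q)\le\sum_{n\ge 0}\binom{\nu\alpha+n-1}{n}(q^{1/\nu})^{n}=(1-q^{1/\nu})^{-\nu\alpha}=M^{\alpha},
\end{equation*}
with $M=(1-q^{1/\nu})^{-\nu}$. Applying this in each direction $d$ with $\nu_d=\lceil s_d\rceil$ and setting $M_d=(1-(\rho/r)^{1/\nu_d})^{-\nu_d}$ and $A=\max_{d}(M_d/r)$, the claim follows from $\prod_d(M_d/r)^{\alpha_d}\le A^{\lambda(\alpha)}$.

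The main obstacle is exactly this exponential bound for $\Theta_{\alpha,s}(q)$ when $q$ is close to $1$. A naive estimate such as $\binom{\alpha+j-1}{j}\le 2^{\alpha+j}$ would demand $2^{s}q<1$ and cover only small $\rho/r$; the Vandermonde-style trick of passing from $q$ to $q^{1/\nu}$ inside a $\nu$-fold convolution is what bypasses this limitation uniformly in $\rho<r$. Its success is tied to the standing restriction $\alpha_d\ge 1$, which is what makes the reduction $\binom{\alpha_d+j-1}{j}^{s_d}\le\binom{\alpha_d+j-1}{j}^{\lceil s_d\rceil}$ permissible, so the condition built into Definition \ref{defi:ModNagumoNorm} is used here in an essential way.
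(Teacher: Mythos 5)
Your proof is correct, but it reaches the key exponential bound $\Theta_{\alpha,s}(\rho/r)\leq M^{\alpha}$ by a genuinely different route than the paper. The paper's proof first establishes the elementary inequality $\binom{j+p-1}{j}\leq(1+\varepsilon)^{j}\left(\frac{1+\varepsilon}{\varepsilon}\right)^{p-1}$, uses it to trade the $s_d$-th power of the binomial coefficient for a first power evaluated at the slightly enlarged argument $x_d(1+\varepsilon)^{s_d-1}/r$, and then sums the resulting series in closed form as $\Theta_{\alpha_d,1}(y)=(1-y)^{-\alpha_d}$, with $\varepsilon$ tuned so that $\rho(1+\varepsilon)^{\lambda(s)-N}<r$; that tuning step implicitly exploits $s_d\geq1$. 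You instead round $s_d$ up to the integer $\nu_d=\lceil s_d\rceil$ (legitimate precisely because $\alpha_d\geq1$ forces $\binom{\alpha_d+j-1}{j}\geq1$), dominate the $\nu_d$-th power by the diagonal term of the $\nu_d$-fold Vandermonde convolution --- the iterated form of the paper's first technical lemma --- and close the sum at the rescaled point $(\rho/r)^{1/\nu_d}$. Each step of your argument checks out: the reduction to an integer exponent, the nonnegativity that justifies discarding off-diagonal convolution terms and enlarging the index set, and the final packaging $A=\max_d(M_d/r)$ (one may take $A\geq1$ to be safe, though the bound $\prod_d(M_d/r)^{\alpha_d}\leq A^{\lambda(\alpha)}$ holds as written). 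Your method dispenses with the $\varepsilon$-tuning and works uniformly for all $s_d>0$ rather than only $s_d\geq1$, at the cost of invoking the convolution identity; the paper's method is more self-contained but is genuinely tied to the normalization of $s$. Both yield a constant of the required multiplicative form $A^{\lambda(\alpha)}$, which is what the fourth step of the main proof needs.
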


\begin{proof}
 For $\alpha=0$, the inequality is obviously true. To show the same for any $\alpha\in[1,+\infty[^N$, let us first notice that for any $a,b\in\R^+$, $p\in[1,+\infty[$ and $j\in\Na$ inequality
 $$
 \dbinom{j+p-1}{j}a^jb^{p-1}\le(a+b)^{j+p-1}
 $$
 holds. Then, if we take $a+b=1$ with $a^{-1}=1+\varepsilon$ for any $\varepsilon>0$, we receive
 $$
 \dbinom{j+p-1}{j}(1+\varepsilon)^{-j}\left(1-\dfrac{1}{1+\varepsilon}\right)^{p-1}\le 1,
 $$
 and then
 \begin{equation}
 \dbinom{j+p-1}{j}\le (1+\varepsilon)^{j}\left(\frac{1+\varepsilon}{\varepsilon}\right)^{p-1}.\label{eqn:4}
 \end{equation}
 
 Let us then fix $\varepsilon>0$ sufficiently small that $\rho(1+\varepsilon)^{\lambda(s)-N}<r$ holds true. We can use inequality (\ref{eqn:4}) to find a majorant series for $f(x)$. More precisely, we get
 \begin{align*}
  f(x)&\ll\frac{\MNN{f}{\alpha}{r}{s}}{r^{\lambda(\alpha)}}\dprod{d=1}{N}{\sum_{j\ge 0}\dbinom{\alpha_d+j-1}{j}^{s_d}\frac{x_d^j}{r^j}}\\
  &\ll\frac{\MNN{f}{\alpha}{r}{s}}{r^{\lambda(\alpha)}}\left(\frac{1+\varepsilon}{\varepsilon}\right)^{\sum_{d=1}^N\alpha_d s_d}\dprod{d=1}{N}{\sum_{j\ge 0}\dbinom{\alpha_d+j-1}{j}\left(\frac{x_d (1+\varepsilon)^{s_d-1}}{r}\right)^j}
 \end{align*}
 Considering our previous restriction on $\varepsilon$ as well as Notation \ref{Not}, we can notice that 
 \begin{align*}
 \sup_{x\in D_{\rho,...,\rho}}\left|f(x)\right|&\le \frac{\MNN{f}{\alpha}{r}{s}}{r^{\lambda(\alpha)}}\left(\frac{1+\varepsilon}{\varepsilon}\right)^{\sum_{d=1}^N\alpha_d s_d}\frac{1}{\left(1-\frac{\rho(1+\varepsilon)^{\lambda(s)-N}}{r}\right)^{\lambda(\alpha)}}\\
& \le\frac{\MNN{f}{\alpha}{r}{s}}{r^{\lambda(\alpha)}}\left(\frac{1+\varepsilon}{\varepsilon}\right)^{\hat{s}\lambda(\alpha)}\frac{1}{\left(1-\frac{\rho(1+\varepsilon)^{\lambda(s)-N}}{r}\right)^{\lambda(\alpha)}}
 \end{align*}
 for $\hat{s}=\max(s_1,\ldots,s_N)$. This concludes the proof.
\end{proof}

We are now able to turn to our initial problem.

\section{Main results}\label{section:main}

Let us consider $N+1$ regular moment functions $m_0,m_1,\ldots,m_N$ of respective orders $s_0>0$ and $s_1,\ldots,s_N\geq 1$. In this section, we focus on the inhomogeneous nonlinear moment partial differential equations of the form
\begin{equation}\label{EQ1}
\begin{cases}
\partial_{m_0;t}^\kappa u - P(t,x,(\partial_{m_0;t}^i\partial_{m;x}^qu)_{(i,q)\in\Lambda})=\widetilde{f}(t,x)\\
\partial_{m_0;t}^ju(t,x)_{|t=0}=\varphi_j(x)\in\Oo(\DCn) \textrm{ for }0\le j<\kappa,
\end{cases}
\end{equation}
where the following conditions are met:
\begin{itemize}
\item$\kappa\geq1$ is a positive integer;
\item$\Lambda$ is a non-empty finite subset of $\{0,...,\kappa-1\}\times\Na^N$;
\item$\partial_{m;x}^q$ stands for the moment derivation $\partial_{m_1;x_1}^{q_1}...\partial_{m_N;x_N}^{q_N}$ while $q=(q_1,...,q_N)$;
\item$P$ is a polynomial with analytic coefficients on the polydisc $\DCN$;
\item$\widetilde{f}(t,x)\in\Domn$.
\end{itemize}
More precisely, we shall always assume that the polynomial $P$ reads in the form
\begin{multline}
P(t,x,(\partial_{m_0;t}^i\partial_{m;x}^qu)_{(i,q)\in\Lambda})=\\
\dsum{n\in\mathcal I}{}{\dsum{(\underline{i},\underline{q},\underline{r})\in\Lambda_n}{}{t^{v_{\underline{i},\underline{q},\underline{r}}}a_{\underline{i},\underline{q},\underline{r}}(t,x)\left(\partial_{m_0;t}^{i_1}\partial_{m;x}^{q_1}u\right)^{r_1}...\left(\partial_{m_0;t}^{i_n}\partial_{m;x}^{q_n}u\right)^{r_n}}},\label{eqn:operator_P}
\end{multline}
where:
\begin{itemize}
\item$\mathcal I$ is a non-empty finite subset of $\Na^*$;
\item for any $n\in\mathcal{I}$, the set $\Lambda_n$ is a non-empty finite subset of $n$-tuples
$$(\underline{i},\underline{q},\underline{r})=((i_1,q_1,r_1),...,(i_n,q_n,r_n))$$
composed of elements of $\{0,...,\kappa-1\}\times\Na^N\times\Na^*$, whose the pairs $(i_k,q_k)$ are all two by two distincts;
\item$v_{\underline{i},\underline{q},\underline{r}}$ is a nonnegative integer for every $(\underline{i},\underline{q},\underline{r})\in\Lambda_n$;
\item$a_{\underline{i},\underline{q},\underline{r}}(t,x)\in\mathcal O(\DCN)$ and $a_{\underline{i},\underline{q},\underline{r}}(0,x)\not\equiv0$ for every $(\underline{i},\underline{q},\underline{r})\in\Lambda_n$.
\end{itemize}

\begin{proposition}
Eq. (\ref{EQ1}) is formally well-posed.
\end{proposition}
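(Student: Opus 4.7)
The plan is to construct the formal power series solution
$$\widetilde u(t,x)=\sum_{j\ge 0}\TC{u}{j}(x)\,\frac{t^{j}}{m_0(j)}$$
coefficient by coefficient, verifying at each step that $\TC{u}{j}(x)\in\Oo(\DCn)$. The crucial structural observation is that every $(i,q)\in\Lambda$ satisfies $i\le\kappa-1$: thus, in any recursion obtained by identifying coefficients in powers of $t$, the sole top-order unknown will always be isolated on the left-hand side.

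First, the initial conditions translate directly into $\TC{u}{j}(x)=\varphi_j(x)$ for $0\le j<\kappa$, and these functions belong to $\Oo(\DCn)$ by hypothesis. Write the inhomogeneity as $\widetilde f(t,x)=\sum_{j\ge 0}\TC{f}{j}(x)\,t^j/m_0(j)$ with $\TC{f}{j}\in\Oo(\DCn)$. For $j\ge 0$, I would then identify the coefficient of $t^j/m_0(j)$ on both sides of (\ref{EQ1}). By Definition \ref{MomentDifferentiationOperator}, this coefficient of $\partial_{m_0;t}^\kappa\widetilde u$ is exactly $\TC{u}{j+\kappa}(x)$. Each monomial of $P$ has the shape $t^{v}a(t,x)\prod_k(\partial_{m_0;t}^{i_k}\partial_{m;x}^{q_k}\widetilde u)^{r_k}$ with $i_k\le\kappa-1$, so its coefficient of $t^j/m_0(j)$ involves only finitely many values $\partial_{m;x}^{q_k}\TC{u}{\ell}(x)$ with $\ell\le j+\kappa-1$, multiplied by Taylor-in-$t$ coefficients of $a(t,x)$; the shift $t^v$ merely forces $v\le j$ and never couples $\TC{u}{j+\kappa}$ to itself. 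This yields an explicit recursive formula of the form
$$\TC{u}{j+\kappa}(x)=F_j\Bigl(x,\bigl\{\TC{u}{\ell}(x)\bigr\}_{\ell\le j+\kappa-1}\Bigr)+\TC{f}{j}(x),$$
in which the right-hand side depends only on already-determined coefficients.

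A straightforward strong induction on $j$ then determines $\TC{u}{j}(x)$ uniquely for every $j\ge 0$. For analyticity on the full polydisc $\DCn$, I would observe that the Taylor-in-$t$ coefficients of each $a_{\underline{i},\underline{q},\underline{r}}(t,x)\in\Oo(\DCN)$ lie in $\Oo(\DCn)$; that $\Oo(\DCn)$ is closed under sums and products; and that by Proposition \ref{AnalyticFunctionMomentDerivatives}, the regularity of the moment functions $m_1,\ldots,m_N$ ensures that each $\partial_{m;x}^q$ preserves $\Oo(\DCn)$. Consequently the induction keeps analyticity on $\DCn$ at every step. There is no genuine obstacle in this argument: it is a direct formal identification in which the hypothesis $i\le\kappa-1$ yields uniqueness, and the regularity of the spatial moment functions is precisely what prevents the domain of analyticity from shrinking during the recursion.
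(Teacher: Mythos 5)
Your proposal is correct and follows essentially the same route as the paper: identify the coefficient of $t^j/m_0(j)$ on both sides to obtain a recursion expressing $\TC{u}{j+\kappa}$ in terms of $\TC{u}{\ell}$ with $\ell\le j+\kappa-1$ (which is where the hypothesis $i\le\kappa-1$ enters), then propagate analyticity on the full polydisc via the regularity of $m_1,\ldots,m_N$ and Proposition \ref{AnalyticFunctionMomentDerivatives}. The paper merely makes the recursion explicit, with the moment multinomial coefficients arising from multiplying series written in the form $\sum_j c_j t^j/m_0(j)$, but this is a presentational difference, not a mathematical one.
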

\begin{proof}
Let us take the coefficients $a_{\underline{i},\underline{q},\underline{r}}(t,x)$ in the form
$$a_{\underline{i},\underline{q},\underline{r}}(t,x)=\dsum{j\geq0}{}{a_{\underline{i},\underline{q},\underline{r};j,*}(x)\dfrac{t^j}{m_0(j)}}$$
and the inhomogeneity $\widetilde{f}(t,x)$ in the form
$$\widetilde{f}(t,x)=\dsum{j\geq0}{}{\TC{f}{j}(x)\dfrac{t^j}{m_0(j)}}.$$
 The coefficients $\TC{u}{j}(x)$ of the formal solution $\widetilde{u}(t,x)$ of Eq. (\ref{EQ1}) given in a similar form are uniquely determined by the recursion formul\ae
\begin{equation}\label{uj}\TC{u}{j+\kappa}(x)=\TC{f}{j}(x)+\dsum{n\in\mathcal I}{}{\dsum{(\underline{i},\underline{q},\underline{r})\in\Lambda_n}{}{\dsum{{j_0+j_1+...+j_{r_1+...+r_n}=j-v_{\underline{i},\underline{q},\underline{r}}}}{}{C_{\underline{i},\underline{q},\underline{r},\underline{j},n}(x)}}}\end{equation}
together with the initial conditions $\TC{u}{j}(x)=\varphi_j(x)$ for $j=0,...,\kappa-1$, where
\begin{multline}\label{Cjn}
C_{\underline{i},\underline{q},\underline{r},\underline{j},n}(x)=\dbinom{j}{j_0,...,j_{r_1+...+r_n}}_{m_0}\TC{a}{\underline{i},\underline{q},\underline{r};j_0}(x)\times\\
\dprod{\ell=1}{n}{\dprod{h=j_{r_1+...+r_{\ell-1}+1}}{j_{r_1+...+r_\ell}}{\partial_{m;x}^{q_\ell}\TC{u}{h+i_\ell}(x)}}.
\end{multline}

The notation $\dbinom{j}{j_0,...,j_{r_1+...+r_n}}_{m_0}$ stands for the moment multinomial coefficient of the form
$$\dbinom{j}{j_0,...,j_{r_1+...+r_n}}_{m_0}=\dfrac{m_0(j)}{m_0(j_0)m_0(j_1)...m_0(j_{r_1+...+r_n})}.$$
As usual, the third sum in (\ref{uj}) is zero as soon as $j<v_{\underline{i},\underline{q},\underline{r}}$, and the term $r_1+...+r_{\ell-1}$ in (\ref{Cjn}) is $0$ when $\ell=1$ so that $j_{r_1+...+r_{\ell-1}+1}=j_1$.

Observe that the fact that all the coefficients $\TC{u}{j}(x)$ are analytic on $\DCn$ is guaranteed by the assumption \textquotedblleft $m_1,...,m_N$ are regular moment functions\textquotedblright\ and Proposition \ref{AnalyticFunctionMomentDerivatives}.
\end{proof}

Let us now denote by $C(a,b)=\{(x,y)\in\R^2;x\leq a\text{ and }y\geq b\}$ for all $a,b\in\R$. Drawing inspiration from \cite{Yonemura} as well as various papers concerning moment differential equations (see for example \cite{MichSu20,Su21}), we define the Newton polygon for the nonlinear operator $\Delta_{\kappa,P}:=\partial_{m_0;t}^\kappa-P(t,x,(\partial_{m_0;t}^i\partial_{m;x}^q)_{(i,q)\in\Lambda})$ associated with Eq. (\ref{EQ1}) as follows.

\begin{definition}\label{defi:NewtonPolygon}
We call \textit{moment Newton polygon of $\Delta_{\kappa,P}$, and we denote it by $\mathcal N(\Delta_{\kappa,P})$,} the convex hull of
$$C(s_0\kappa,-\kappa)\cup\bigcup_{n\in\mathcal I}\bigcup_{(\underline{i},\underline{q},\underline{r})\in\Lambda_n}C\left(\dsum{\ell=1}{n}{\left(s_0r_\ell i_\ell+r_\ell\lambda(sq_\ell)\right)},v_{\underline{i},\underline{q},\underline{r}}-\dsum{\ell=1}{n}{r_\ell i_\ell}\right)$$
with
$$\lambda(sq_\ell)=\dsum{d=1}{N}{s_d q_{\ell,d}}.$$
\end{definition}

Further ahead the following assumption will be used:
\begin{assumption}\label{ASS1}
For all $n\in\mathcal I$ and all $(\underline{i},\underline{q},\underline{r})\in\Lambda_n$ we assume that $$\dsum{\ell=1}{n}{r_\ell i_\ell}-v_{\underline{i},\underline{q},\underline{r}}<\kappa.$$
\end{assumption}

The geometric structure of $\mathcal N(\Delta_{\kappa,P})$ is specified in the following.

\begin{proposition}\label{GeometricStructureMomentNewtonPolygon}
For any $n\in\mathcal I$, let us denote by $\mathcal S_n$ the set of all the the tuples $(\underline{i},\underline{q},\underline{r})\in\Lambda_n$ such that
$$\dsum{\ell=1}{n}{\left(s_0r_\ell i_\ell+r_\ell\lambda(sq_\ell)\right)}>s_0\kappa.$$
Let $\mathcal S=\displaystyle\bigcup_{n\in\mathcal I}\mathcal S_n$.
\begin{enumerate}
\item Assume $\mathcal S=\emptyset$. Then, the moment Newton polygon $\mathcal N(\Delta_{\kappa,P})$ is reduced to the domain $C(s_0\kappa,-\kappa)$. In particular, it has no side with a positive slope (see Fig. \ref{MomentNewtonPolygonPropFig1}).
\item Assume $\mathcal S\neq\emptyset$. Then, the moment Newton polygon $\mathcal N(\Delta_{\kappa,P})$ has at least one side with a positive slope. Moreover, its smallest positive slope $k$ is given by
\begin{multline*}
k=\min_{\substack{n\in\mathcal I\\(\underline{i},\underline{q},\underline{r})\in\mathcal S_n}}\left(\dfrac{\kappa+v_{\underline{i},\underline{q},\underline{r}}-\dsum{\ell=1}{n}{r_\ell i_\ell}}{\dsum{\ell=1}{n}{\left(s_0r_\ell i_\ell+r_\ell\lambda(sq_\ell)\right)}-s_0\kappa}\right)\\=\frac{\kappa+v_{\underline{i}^*,\underline{q}^*,\underline{r}^*}-\dsum{\ell=1}{n^*}{r_\ell^* i_\ell^*}}{\dsum{\ell=1}{n^*}{\left(s_0r_\ell^* i_\ell^*+r_\ell^*\lambda(sq_\ell^*)\right)}-s_0\kappa},
\end{multline*}
where $n^*\in\mathcal I$ and the tuple $(\underline{i}^*,\underline{q}^*,\underline{r}^*)\in\mathcal S_{n*}$ are chosen (see Fig. \ref{MomentNewtonPolygonPropFig2}) in such a way that the edge with slope $k$ is the segment with end points $(s_0\kappa,-\kappa)$ and $$\left(\dsum{\ell=1}{n^*}{\left(s_0r_\ell^* i_\ell^*+r_\ell^*\lambda(sq_\ell^*)\right)},v_{\underline{i}^*,\underline{q}^*,\underline{r}^*}-\dsum{\ell=1}{n^*}{r_\ell^* i_\ell^*}\right).$$
\end{enumerate}
\begin{center}
\begin{figure}[h]
	\begin{center}
		\begin{subfigure}[b]{5cm}
		\centering\begin{tikzpicture}
		\draw[fill=gray!10,color=gray!10] (-1,-1) -- (1,-1) -- (1,2.5) -- (-1,2.5) -- cycle;
		\draw(-1,0) -- (2.5,0);
		\draw(0,-1.5) -- (0,2.5);
		\draw (0,-1) node{-} node[below left]{$-\kappa$};
		\draw (1,0) node[rotate=90]{-} node[below right]{$s_0\kappa$};
		\draw (0,1.75) node{$\bullet$}; \draw (1/3,-2/3) node{$\bullet$};
		\draw (1/3,0) node{$\bullet$}; \draw (1/3,0) node{$\bullet$};
		\draw (2/3,0.75) node{$\bullet$}; \draw (2/3,1.25) node{$\bullet$}; \draw (2/3,-1/3) node{$\bullet$};
		\draw[line width=1pt] (-1,-1) -- (1,-1) node{$\bullet$} -- (1,2.5); \draw (1,0.375) node{$\bullet$};
		\draw (0,0) node[below left]{$0$};
		\end{tikzpicture}
		\caption{Case $\mathcal S=\emptyset$}\label{MomentNewtonPolygonPropFig1}
		\end{subfigure}
		\begin{subfigure}[b]{6cm}
		\centering\begin{tikzpicture}[>=latex]
		\draw[fill=gray!10,color=gray!10] (-1,-1) -- (1,-1) -- (2,-0.5) -- (2.5,0.25) -- (2.9,1.3) -- (2.9,2.5) -- (-1,2.5) -- cycle;
		\draw(-1,0) -- (3.5,0);
		\draw(0,-1.5) -- (0,2.5);
		\draw (0,-1) node{-} node[below left]{$-\kappa$};
		\draw (1,0) node[rotate=90]{-} node[above]{$s_0\kappa$};
		\draw[dashed] (1,0) -- (1,-1);
		\draw [dashed] (2,0) node[rotate=90]{-} -- (2,-0.5);
		\draw[->] (2.5,-0.5) node[right]{\scriptsize$\dsum{\ell=1}{n^*}{\left(s_0r_\ell^* i_\ell^*+r_\ell^*\lambda(sq_\ell^*)\right)}$} -- (2,0);
		\draw [dashed] (0,-0.5) node{-} node[left]{\scriptsize{$v_{\underline{i}^*,\underline{q}^*,\underline{r}^*}-\dsum{\ell=1}{n^*}{r_\ell^* i_\ell^*}$}} -- (2,-0.5);
		\draw[line width=1pt] (-1,-1) -- (1,-1) node{$\bullet$} -- node[below,sloped]{\textnormal{\footnotesize{slope $k$}}} (2,-0.5) node{$\bullet$} -- (2.5,0.25) node{$\bullet$};
		\draw[line width=1pt,densely dashed] (2.5,0.25) -- (2.9,1.3) node{$\bullet$};
		\draw[line width=1pt] (2.9,1.3) -- (2.9,2.5);
		\draw (0,1.75) node{$\bullet$}; \draw (0.5,-0.5) node{$\bullet$}; \draw (0.5,0) node{$\bullet$};
		\draw (1,1.5) node{$\bullet$};
		\draw (1.5,1.5) node{$\bullet$}; \draw (1.5,0) node{$\bullet$}; \draw (1.5,-0.75) node{$\bullet$};
		\draw (1.5,0.5) node{$\bullet$}; \draw (2,2) node{$\bullet$}; \draw (2,1.25) node{$\bullet$}; \draw (2,0.75) node{$\bullet$};
		\draw (2.5,0.75) node{$\bullet$}; \draw (2.5,1.25) node{$\bullet$};
		\draw (2.9,1.7) node{$\bullet$};
		\draw (0,0) node[above left]{$0$};
		\end{tikzpicture}
		\caption{Case $\mathcal S\neq\emptyset$}\label{MomentNewtonPolygonPropFig2}
		\end{subfigure}
	\end{center}
	\caption{The moment Newton polygon $\mathcal N(\Delta_{\kappa,P})$ associated with Eq. (\ref{EQ1})}
\end{figure}
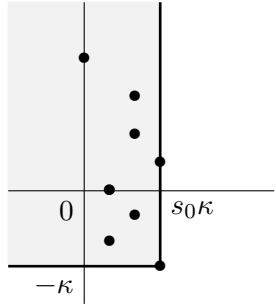
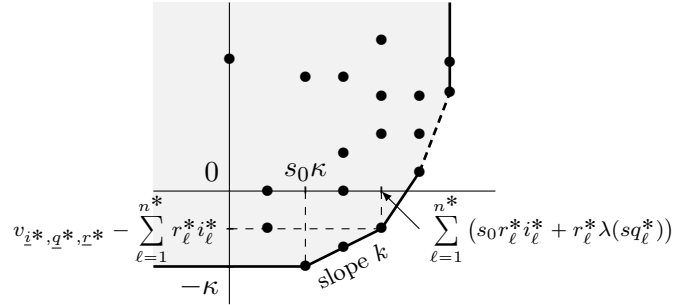
\end{center}
\end{proposition}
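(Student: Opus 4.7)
The plan is to analyze which corner points $(a,b)$ of the sets $C(a,b)$ in Definition~\ref{defi:NewtonPolygon} actually contribute to the convex hull, and then to compute the smallest positive slope through a support-line argument. First I would record the trivial inclusion that if $a\le s_0\kappa$ and $b\ge -\kappa$ then $C(a,b)\subseteq C(s_0\kappa,-\kappa)$. Next I would invoke Assumption~\ref{ASS1} to deduce that every corner coming from a tuple $(\underline{i},\underline{q},\underline{r})\in\Lambda_n$ satisfies
\[b=v_{\underline{i},\underline{q},\underline{r}}-\sum_{\ell=1}^n r_\ell i_\ell >-\kappa.\]
Consequently the only tuples whose corner $(a,b)$ can lie outside $C(s_0\kappa,-\kappa)$ are precisely those in $\mathcal{S}$, which are characterized by $a>s_0\kappa$.

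For part (1), when $\mathcal{S}=\emptyset$, every corner satisfies $a\le s_0\kappa$ together with $b>-\kappa$, so every $C(a,b)$ is absorbed into $C(s_0\kappa,-\kappa)$. The union then equals $C(s_0\kappa,-\kappa)$, which is already convex and hence coincides with its own convex hull. Its boundary consists of the horizontal ray $y=-\kappa$, $x\le s_0\kappa$ and the vertical ray $x=s_0\kappa$, $y\ge -\kappa$; neither has positive slope.

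For part (2), when $\mathcal{S}\neq\emptyset$, I would choose any tuple in $\mathcal{S}$, whose corner $(a,b)$ satisfies $a>s_0\kappa$ and $b>-\kappa$ and therefore lies strictly outside $C(s_0\kappa,-\kappa)$. The convex hull must then contain a segment joining $(s_0\kappa,-\kappa)$ to some vertex in the up-right direction, producing at least one edge of positive slope. To identify the smallest such slope, I would compute the slope of the segment from $(s_0\kappa,-\kappa)$ to a generic corner associated with $(\underline{i},\underline{q},\underline{r})\in\mathcal{S}_n$,
\[\frac{b-(-\kappa)}{a-s_0\kappa}=\frac{\kappa+v_{\underline{i},\underline{q},\underline{r}}-\sum_{\ell=1}^n r_\ell i_\ell}{\sum_{\ell=1}^n\bigl(s_0 r_\ell i_\ell+r_\ell\lambda(sq_\ell)\bigr)-s_0\kappa},\]
and show that the infimum of these ratios over $\mathcal{S}$ equals $k$. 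The line through $(s_0\kappa,-\kappa)$ realizing this minimum is a supporting line of the convex hull: by minimality every other corner of $\mathcal{S}$ lies on or above it, and the corners of $C(s_0\kappa,-\kappa)$ itself lie to its left. This line therefore carries the first (smallest-slope) edge of the lower-right boundary of $\mathcal{N}(\Delta_{\kappa,P})$, terminating at the minimizing corner $(\underline{i}^*,\underline{q}^*,\underline{r}^*)$.

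The argument involves no deep idea; the proposition is essentially a direct unpacking of Definition~\ref{defi:NewtonPolygon}, Assumption~\ref{ASS1}, and the geometry of the convex hull of a union of translated upper-left quadrants. The only bookkeeping step requiring any care is the support-line verification in part (2), which is routine once one notices that Assumption~\ref{ASS1} keeps every corner strictly above the horizontal ray $y=-\kappa$, ensuring that $(s_0\kappa,-\kappa)$ really is the vertex from which the first positive-slope edge emanates.
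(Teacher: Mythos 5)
Your proof is correct and follows essentially the same route as the paper's: both arguments reduce to noting that Assumption~\ref{ASS1} keeps every corner strictly above $y=-\kappa$, that tuples outside $\mathcal S$ yield quadrants absorbed into $C(s_0\kappa,-\kappa)$, and that the smallest positive slope is the minimum over $\mathcal S$ of the slopes of the segments joining $(s_0\kappa,-\kappa)$ to the corresponding corners. Your explicit support-line verification is slightly more detailed than the paper's brief "it suffices to remark", but it is the same argument.
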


\begin{proof}
The first point stems obviously from the fact that the condition $\mathcal S=\emptyset$ implies
$$C\left(\dsum{\ell=1}{n}{\left(s_0r_\ell i_\ell+r_\ell\lambda(sq_\ell)\right)},v_{\underline{i},\underline{q},\underline{r}}-\dsum{\ell=1}{n}{r_\ell i_\ell}\right)\subset C(s_0\kappa,-\kappa)$$ for all $(\underline{i},\underline{q},\underline{r})\in\Lambda_n$ and all $n\in\mathcal I$. As for the second point, it suffices to remark, on one hand, that
$$C\left(\dsum{\ell=1}{n}{\left(s_0r_\ell i_\ell+r_\ell\lambda(sq_\ell)\right)},v_{\underline{i},\underline{q},\underline{r}}-\dsum{\ell=1}{n}{r_\ell i_\ell}\right)\subset C(s_0\kappa,-\kappa)$$ for all tuples $(\underline{i},\underline{q},\underline{r})\not\in\mathcal S$, and, on the other hand, that the segment with the two end points $(s_0\kappa,-\kappa)$ and
$$\left(\dsum{\ell=1}{n}{\left(s_0r_\ell i_\ell+r_\ell\lambda(sq_\ell)\right)},v_{\underline{i},\underline{q},\underline{r}}-\dsum{\ell=1}{n}{r_\ell i_\ell}\right)$$ has a positive slope equal to
$$\dfrac{\kappa+v_{\underline{i},\underline{q},\underline{r}}-\dsum{\ell=1}{n}{r_\ell i_\ell}}{\dsum{\ell=1}{n}{\left(s_0r_\ell i_\ell+r_\ell\lambda(sq_\ell)\right)}-s_0\kappa}$$
for all tuples $(\underline{i},\underline{q},\underline{r})\in\mathcal S$.
\end{proof}

\begin{definition}\label{DefSigmac}
We call \textit{critical value} of Eq. (\ref{EQ1}) the nonnegative real number $\sigma_c$ defined by
$$\sigma_c=\begin{cases}
0&\text{if }\mathcal S=\emptyset\\
\dfrac{1}{k}=\dfrac{\dsum{\ell=1}{n^*}{\left(s_0r_\ell^* i_\ell^*+r_\ell^*\lambda(sq_\ell^*)\right)}-s_0\kappa}{\kappa+v_{\underline{i}^*,\underline{q}^*,\underline{r}^*}-\dsum{\ell=1}{n^*}{r_\ell^* i_\ell^*}}&\text{if }\mathcal S\neq\emptyset
\end{cases}.$$
\end{definition}

According to the definition of $\sigma_c$, we derive in particular from Proposition \ref{GeometricStructureMomentNewtonPolygon} the following inequalities which will play a fundamental role in the proof of our main result.

\begin{proposition}\label{IneqMomentNewtonPolygon}
Let $\sigma\geq\sigma_c$. Then,
$$(\sigma+s_0)\left(\kappa+v_{\underline{i},\underline{q},\underline{r}}-\dsum{\ell=1}{n}{r_\ell i_\ell}\right)\geq s_0v_{\underline{i},\underline{q},\underline{r}}+\dsum{\ell=1}{n}{r_\ell\lambda(sq_\ell)}$$
for all $n\in\mathcal I$ and all $(\underline{i},\underline{q},\underline{r})\in\Lambda_n$.
\end{proposition}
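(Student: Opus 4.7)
The plan is to rearrange the inequality into a form where the geometric meaning of $\sigma_c$ as the reciprocal of the smallest positive slope $k$ of $\mathcal N(\Delta_{\kappa,P})$ can be invoked directly. Expanding the product on the left and cancelling the $s_0 v_{\underline{i},\underline{q},\underline{r}}$ terms, the inequality is equivalent to
$$
\sigma\left(\kappa+v_{\underline{i},\underline{q},\underline{r}}-\sum_{\ell=1}^{n}r_\ell i_\ell\right)\geq \sum_{\ell=1}^{n}\bigl(s_0 r_\ell i_\ell+r_\ell\lambda(sq_\ell)\bigr)-s_0\kappa.
$$
By Assumption \ref{ASS1}, the parenthesized quantity on the left-hand side is strictly positive, which is the only input needed to handle the sign of that factor.

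Next, I would split according to whether the tuple $(\underline{i},\underline{q},\underline{r})$ belongs to $\mathcal S_n$ or not. In the \emph{easy case} $(\underline{i},\underline{q},\underline{r})\notin\mathcal S_n$, the definition of $\mathcal S_n$ gives $\sum(s_0 r_\ell i_\ell+r_\ell\lambda(sq_\ell))\leq s_0\kappa$, so the right-hand side above is $\leq 0$ while the left-hand side is $\geq 0$, and we are done. In particular, this takes care of the whole statement when $\mathcal S=\emptyset$ (in which case $\sigma_c=0$ and $\sigma\geq 0$ suffices). In the \emph{nontrivial case} $(\underline{i},\underline{q},\underline{r})\in\mathcal S_n$, the denominator $\sum(s_0 r_\ell i_\ell+r_\ell\lambda(sq_\ell))-s_0\kappa$ is positive, and by the characterization of $k$ in Proposition \ref{GeometricStructureMomentNewtonPolygon} as the minimum over $\mathcal S$ of the corresponding slope, we have
$$
\frac{\kappa+v_{\underline{i},\underline{q},\underline{r}}-\sum_{\ell=1}^{n}r_\ell i_\ell}{\sum_{\ell=1}^{n}(s_0 r_\ell i_\ell+r_\ell\lambda(sq_\ell))-s_0\kappa}\geq k.
$$
Multiplying through by the positive denominator and then by $\sigma$, and using $\sigma k\geq \sigma_c k=1$ from Definition \ref{DefSigmac}, yields the desired bound.

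There is essentially no obstacle here beyond bookkeeping: the only subtlety is making sure to treat uniformly the case $\mathcal S=\emptyset$ (where $\sigma_c=0$ and the slope-based estimate is vacuous) together with the case $\mathcal S\neq\emptyset$. Both are absorbed into the case split above, since a tuple outside $\mathcal S$ automatically falls into the easy case regardless of whether $\mathcal S$ itself is empty.
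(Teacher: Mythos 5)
Your proposal is correct and follows essentially the same route as the paper: rearrange the inequality so that $\sigma$ multiplies the (positive, by Assumption \ref{ASS1}) quantity $\kappa+v_{\underline{i},\underline{q},\underline{r}}-\sum_\ell r_\ell i_\ell$, then split on whether the tuple lies in $\mathcal S$ and invoke $\sigma\geq\sigma_c=1/k$ with $k$ the minimal positive slope. The only cosmetic difference is that the paper compares $\sigma_c$ directly to the reciprocal slope of each tuple rather than multiplying by $k$, which is the same estimate.
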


\begin{proof}
The following is an adaptation of the proof of \cite[Lemma 3.9]{R23b}. First let us consider the case when $\mathcal{S}=\emptyset$ and, consequently, $\sigma_c=0$. Then from the definition of the set $\mathcal S$ it follows that
\begin{multline*}
(\sigma+s_0)\left(\kappa+v_{\underline{i},\underline{q},\underline{r}}-\dsum{\ell=1}{n}{r_\ell i_\ell}\right)\ge s_0\left(\kappa+v_{\underline{i},\underline{q},\underline{r}}-\dsum{\ell=1}{n}{r_\ell i_\ell}\right)\\\ge s_0v_{\underline{i},\underline{q},\underline{r}}+\dsum{\ell=1}{n}{r_\ell\lambda(sq_\ell)}.
\end{multline*}

Let us now consider the case when $\mathcal{S}\ne\emptyset$. Then $\sigma_c$ is defined as follows:
$$
\sigma_c=\dfrac{\dsum{\ell=1}{n^*}{\left(s_0r_\ell^* i_\ell^*+r_\ell^*\lambda(sq_\ell^*)\right)}-s_0\kappa}{\kappa+v_{\underline{i}^*,\underline{q}^*,\underline{r}^*}-\dsum{\ell=1}{n^*}{r_\ell^* i_\ell^*}}
$$
and for any $(\underline{i},\underline{q},\underline{r})\in \mathcal{S}$ the following inequalities hold
$$
\sigma\ge\sigma_c\ge\dfrac{\dsum{\ell=1}{n}{\left(s_0r_\ell i_\ell+r_\ell\lambda(sq_\ell)\right)}-s_0\kappa}{\kappa+v_{\underline{i},\underline{q},\underline{r}}-\dsum{\ell=1}{n}{r_\ell i_\ell}}>0.
$$
Moreover, if $(\underline{i},\underline{q},\underline{r})
\not\in \mathcal{S}$, then 
$$\dsum{\ell=1}{n}{\left(s_0r_\ell i_\ell+r_\ell\lambda(sq_\ell)\right)}\le s_0\kappa.$$
From this it follows that
$$
\sigma\ge\sigma_c>0\ge \dfrac{\dsum{\ell=1}{n}{\left(s_0r_\ell i_\ell+r_\ell\lambda(sq_\ell)\right)}-s_0\kappa}{\kappa+v_{\underline{i},\underline{q},\underline{r}}-\dsum{\ell=1}{n}{r_\ell i_\ell}}.
$$

Hence,
$$
(\sigma+s_0)\left(\kappa+v_{\underline{i},\underline{q},\underline{r}}-\dsum{\ell=1}{n}{r_\ell i_\ell}\right)\ge s_0v_{\underline{i},\underline{q},\underline{r}}+\dsum{\ell=1}{n}{r_\ell\lambda(sq_\ell)}.
$$
\end{proof}

We can now present the main result of the paper:
\begin{theorem}\label{GevreyRegularityTheorem}
Let $\sigma_c$ be the critical value of Eq. (\ref{EQ1}). Then,
\begin{enumerate}
\item$\widetilde{u}(t,x)$ and $\widetilde{f}(t,x)$ are simultaneously $\sigma$-Gevrey for any $\sigma\geq\sigma_c$;
\item$\widetilde{u}(t,x)$ is generically $\sigma_c$-Gevrey while $\widetilde{f}(t,x)$ is $\sigma$-Gevrey with $\sigma<\sigma_c$.
\end{enumerate}
\end{theorem}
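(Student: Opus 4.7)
The plan is to split the first claim into its two implications and handle the second claim by a tailored explicit example at the extremal face of the Newton polygon.

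For the implication $\widetilde{u}\in\DomGn{\sigma}\Rightarrow\widetilde{f}\in\DomGn{\sigma}$, I would exploit only the algebraic stability of Gevrey classes. By Corollary \ref{StabilityGevreyMomentDerivative}, each $\partial_{m_0;t}^i\partial_{m;x}^q\widetilde{u}$ is $\sigma$-Gevrey. Proposition \ref{GevreyAlgStructure} (whose proof in \cite{R23b} adapts verbatim once the moment derivatives have been shown to stabilize $\DomGn{\sigma}$) then shows that the polynomial expression $P(t,x,(\partial_{m_0;t}^i\partial_{m;x}^q\widetilde{u})_{(i,q)\in\Lambda})$ is $\sigma$-Gevrey, since multiplication by analytic coefficients $a_{\underline{i},\underline{q},\underline{r}}(t,x)$ and by monomials $t^{v_{\underline{i},\underline{q},\underline{r}}}$ clearly preserves Gevrey regularity. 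Hence $\widetilde{f}=\partial_{m_0;t}^{\kappa}\widetilde{u}-P$ is $\sigma$-Gevrey. This implication does not use the assumption $\sigma\ge\sigma_c$.

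For the converse $\widetilde{f}\in\DomGn{\sigma}\Rightarrow\widetilde{u}\in\DomGn{\sigma}$ when $\sigma\ge\sigma_c$, the strategy is to run the majorant/Nagumo norm argument on the recursion (\ref{uj})--(\ref{Cjn}). Fix some $\alpha\in[1,+\infty[^N$ (e.g. $\alpha=(1,\ldots,1)$) and radii $0<r<\min(\rho_1,\ldots,\rho_N)$ as in Proposition \ref{prop:gevrey_norm}, and attempt to prove by strong induction on $j$ an estimate of the form
\begin{equation*}
\MNN{\TC{u}{j}}{j\alpha}{r}{s}\le C\,B^{j}\,\Gamma(1+(\sigma+s_0)j)
\end{equation*}
for suitable constants $C,B>0$. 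The base case $j=0,\ldots,\kappa-1$ is immediate from analyticity of the initial data $\varphi_j$. For the inductive step, I would apply Corollary \ref{NagumoNormDerivative} to each factor $\partial_{m;x}^{q_\ell}\TC{u}{h+i_\ell}$, then Proposition \ref{NagumoNormProduct} and Corollary \ref{NagumoNormProductWithOne} to control the product defining $C_{\underline{i},\underline{q},\underline{r},\underline{j},n}$, and finally the assumption that $\widetilde{f}$ is $\sigma$-Gevrey (via Proposition \ref{prop:gevrey_norm}) for the inhomogeneous term. The convolutive sums in (\ref{uj}) together with the moment multinomial factors $\binom{j}{j_0,\ldots,j_{r_1+\cdots+r_n}}_{m_0}$ are handled by the Gamma-function identities arising from (\ref{IneqMomentFunction}). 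The critical point is to verify that, after collecting all powers of $B$, all factors of $\Gamma$, and all exponents of $r$ coming from the various indices $v_{\underline{i},\underline{q},\underline{r}}$, $i_\ell$, $r_\ell$, $q_\ell$, the inductive estimate closes: this is precisely the content of Proposition \ref{IneqMomentNewtonPolygon}, which guarantees that the exponent of $\Gamma(1+(\sigma+s_0)\cdot)$ on the right-hand side does not exceed that on the left-hand side whenever $\sigma\ge\sigma_c$. Proposition \ref{prop:sup_norm} then converts these norm bounds into pointwise estimates $|\TC{u}{j}(x)|\le A^{j\lambda(\alpha)}CB^{j}\Gamma(1+(\sigma+s_0)j)$ on $D_{\rho,\ldots,\rho}$, which, after absorbing the factor $m_0(j)\le C'A'^j\Gamma(1+s_0 j)$ by (\ref{IneqMomentFunction}), gives exactly the $\sigma$-Gevrey bound of Definition \ref{defi:gevrey-order}.

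The main obstacle will be the bookkeeping: the nonlinearity forces the induction to be nonlinear, and one must track simultaneously the evolution of the weight index $j\alpha$, the powers of $r$, the combinatorial factor produced by Corollary \ref{NagumoNormDerivative} (of order $q_\ell!^{s_d}\binom{\alpha_d+q_d-1}{q_d}^{s_d}$), and the moment multinomials, then invoke Proposition \ref{IneqMomentNewtonPolygon} at exactly the right step. It is likely cleanest to phrase the induction via a scalar majorant equation whose unique formal solution is shown to be analytic (i.e.\ $0$-Gevrey after the natural rescaling by $\Gamma(1+(\sigma+s_0)j)$), so that the majorant property transfers the Gevrey bound back to $\widetilde{u}$.

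For the second point, genericity of the order $\sigma_c$ cannot follow from pure estimates and requires an explicit example. Following the template of \cite{R23b}, I would choose the nonlinear monomial realizing the extremal vertex of $\mathcal N(\Delta_{\kappa,P})$ — that is, the tuple $(n^*,\underline{i}^*,\underline{q}^*,\underline{r}^*)$ from Proposition \ref{GeometricStructureMomentNewtonPolygon} — and take a minimal equation of the shape $\partial_{m_0;t}^\kappa u=t^{v^*}a^*(t,x)\prod_\ell(\partial_{m_0;t}^{i_\ell^*}\partial_{m;x}^{q_\ell^*}u)^{r_\ell^*}+\widetilde{f}(t,x)$ with zero initial data, an explicit $\widetilde{f}$ that is $\sigma$-Gevrey for some prescribed $\sigma<\sigma_c$, and $a^*$ chosen so that no internal cancellation occurs in the recursion. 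Expanding (\ref{uj}) and iterating, one reads off the growth rate of $\TC{u}{j}(x)$ at a well-chosen point $x_0$ and verifies by an elementary lower bound (using Stirling's formula and (\ref{IneqMomentFunction})) that the coefficients saturate the Gevrey order $\sigma_c$, so that $\widetilde{u}$ is $\sigma_c$-Gevrey and strictly not $\sigma'$-Gevrey for any $\sigma'<\sigma_c$.
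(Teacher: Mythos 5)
Your overall strategy matches the paper's: for point (1) the forward implication via Proposition \ref{GevreyAlgStructure} and Corollary \ref{StabilityGevreyMomentDerivative}, the converse via modified Nagumo norms with linearly growing weight, a scalar majorant equation solved by a fixed point, and Proposition \ref{IneqMomentNewtonPolygon} to close the estimate; for point (2) an explicit example saturating the extremal vertex of the Newton polygon. However, there are two concrete gaps. First, the weight $\alpha$ is not a free parameter, and your suggested choice $\alpha=(1,\ldots,1)$ does not work. In the inductive step the total index $(j+\kappa)\alpha$ must be split exactly among the factors of $C_{\underline{i},\underline{q},\underline{r},\underline{j},n}$: each factor $\partial_{m;x}^{q_\ell}\TC{u}{h+i_\ell}$ consumes $(h+i_\ell)\alpha+q_\ell$ (Corollary \ref{NagumoNormDerivative}), and the residual index assigned to the coefficient $a_{\underline{i},\underline{q},\underline{r};j_0}$ is $\bigl(j_0+\kappa-\sum_\ell r_\ell i_\ell+v_{\underline{i},\underline{q},\underline{r}}\bigr)\alpha-\sum_\ell r_\ell q_\ell$, which must lie in $[1,+\infty[^N$ for the modified Nagumo norms and Proposition \ref{NagumoNormProduct} to apply at all (they are undefined for components in $]0,1[$). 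With $\alpha=(1,\ldots,1)$ this residual is negative as soon as the $q_\ell$ are large. The paper takes all components of $\alpha_\sigma$ equal to $(\sigma+s_0)(\kappa+v)$ with an explicit positive correction $\varsigma$, and verifies via Proposition \ref{IneqMomentNewtonPolygon} that the residual index stays $\geq1$; this choice is where the Newton polygon actually enters the weight bookkeeping, not merely in the final power count of $(j+1)$.

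Second, for point (2) your sketch omits the device that makes the lower bound provable. With zero initial data and a generic \textquotedblleft explicit $\widetilde{f}$\textquotedblright, reading off the growth of $\TC{u}{j}(x_0)$ is not straightforward: iterating the extremal monomial applies $\partial_{m;x}^{q^*_{n^*}}$ a total of $j$ times to the seed, and the iterated moment derivatives of an arbitrary analytic seed evaluated at a point need not grow at the maximal rate $\prod_d(jq^*_{n^*,d})!^{s_d}$ (they can even vanish). The paper resolves this by (i) restricting to extremal tuples with $r^*_{n^*}=1$ and $i^*_\ell=0$, $q^*_\ell=0$ for $\ell<n^*$, so the dominant term is linear in the top derivative and the recursion can be iterated explicitly without nonlinear interference, (ii) choosing the seed to be the special function $\mathcal E_m(x)$ of Lemma \ref{ExFunction}, designed precisely so that $\partial_{m;x}^{jq^*_{n^*}}\mathcal E_m(0)=\prod_d a_d^{jq^*_{n^*,d}}(jq^*_{n^*,d})!^{s_d}$, and (iii) imposing positivity of all coefficients, initial data and $\partial_{m;x}^{\ell}\TC{f}{j}(0)$ so that the remaining terms cannot cancel the dominant one. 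If you keep a genuinely nonlinear extremal monomial (say $r^*_{n^*}\geq2$) with zero Cauchy data, the lower bound becomes substantially harder and your plan as stated would not go through; you need the linearization and the explicit seed.
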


Before we move on to the proof of both parts of Theorem \ref{GevreyRegularityTheorem}, let us first formulate a corollary that deals with convergent inhomogeneity $\widetilde{f}(t,x)$ as well as some additional examples.

\begin{corollary}
Assume that the inhomogeneity $\widetilde{f}(t,x)$ of Eq. (\ref{EQ1}) is convergent. Then, the formal solution $\widetilde{u}(t,x)$ is either convergent or $1/k$-Gevrey, where $k$ stands for the smallest positive slope of its associated moment Newton polygon. 
\end{corollary}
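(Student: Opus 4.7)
The plan is to deduce this corollary directly from Theorem~\ref{GevreyRegularityTheorem} by specializing to the case in which the inhomogeneity is $0$-Gevrey. A convergent $\widetilde{f}(t,x)$ is exactly an element of $\DomGn{0}$, and by the filtration (\ref{FiltrationGevreySpaces}) it lies in $\DomGn{\sigma}$ for every $\sigma\geq 0$. Thus, regardless of the value of $\sigma_c$, we may feed Theorem~\ref{GevreyRegularityTheorem}(1) with $\sigma=\max(0,\sigma_c)=\sigma_c$.

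I would split the discussion along Definition~\ref{DefSigmac}. If $\mathcal S=\emptyset$, then $\sigma_c=0$ and, by Proposition~\ref{GeometricStructureMomentNewtonPolygon}(1), the moment Newton polygon has no side of positive slope, so the \emph{$1/k$-Gevrey} alternative is vacuous. Applying Theorem~\ref{GevreyRegularityTheorem}(1) with $\sigma=0\geq\sigma_c$ yields $\widetilde{u}(t,x)\in\DomGn{0}$, which means $\widetilde{u}(t,x)$ is convergent, as claimed.

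If instead $\mathcal S\neq\emptyset$, then $\sigma_c=1/k$ where $k$ is the smallest positive slope of $\mathcal N(\Delta_{\kappa,P})$. The convergent $\widetilde{f}(t,x)$ belongs to $\DomGn{0}\subset\DomGn{1/k}$ by (\ref{FiltrationGevreySpaces}), so Theorem~\ref{GevreyRegularityTheorem}(1) applied with $\sigma=1/k=\sigma_c$ gives $\widetilde{u}(t,x)\in\DomGn{1/k}$, i.e.\ $\widetilde{u}(t,x)$ is $1/k$-Gevrey. The "either convergent" branch of the dichotomy records the possibility that $\widetilde{u}(t,x)$ might accidentally be more regular than $\sigma_c$-Gevrey due to special cancellations between the initial data, the coefficients of $P$, and $\widetilde{f}$; by Theorem~\ref{GevreyRegularityTheorem}(2), however, the generic behaviour is precisely $\sigma_c=1/k$-Gevrey.

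There is no substantial obstacle: the statement is a straightforward specialization of Theorem~\ref{GevreyRegularityTheorem} to $\sigma=0$, combined with the Gevrey filtration and the geometric content of Proposition~\ref{GeometricStructureMomentNewtonPolygon}. The only point worth emphasizing in the write-up is the clean bookkeeping between the two cases $\mathcal S=\emptyset$ and $\mathcal S\neq\emptyset$, so that both alternatives in the dichotomy are explicitly justified.
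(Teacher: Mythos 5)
Your proof is correct and is essentially the argument the paper intends: the corollary is stated as an immediate consequence of Theorem \ref{GevreyRegularityTheorem}, obtained exactly as you do by noting that a convergent $\widetilde{f}$ is $0$-Gevrey, hence $\sigma_c$-Gevrey by the filtration (\ref{FiltrationGevreySpaces}), and then applying the first point of the theorem with $\sigma=\sigma_c$, distinguishing $\mathcal S=\emptyset$ (where $\sigma_c=0$ and $\widetilde{u}$ is convergent) from $\mathcal S\neq\emptyset$ (where $\sigma_c=1/k$).
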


\begin{example}
Let us consider the \textit{semilinear regular moment heat equation}
\begin{equation}\label{momentHE}
\begin{cases}
\partial_{m_0;t} u-t^va(t,x)\Delta_{m;x}u+b(t,x)u^r=\widetilde{f}(t,x)\\
u(0,x)=\varphi(x)\in\Oo(D_{\rho_1,...,\rho_N})
\end{cases}
\end{equation}
where
\begin{itemize}
\item $\Delta_{m;x}=\partial_{m_1;x_1}^2+...+\partial_{m_N;x_N}^2$ is the moment Laplace operator;
\item the degree $r$ of the power-law nonlinearity is an integer at least $2$;
\item the valuation $v$ is a nonnegative integer;
\item the coefficients $a(t,x)$ and $b(t,x)$ are analytic on a polydisc $D_{\rho_0,\rho_1,...,\rho_N}$ and $a(0,x)\not\equiv0$;
\item $\widetilde{f}(t,x)\in\Domn$.
\end{itemize}
The moment Newton polygon associated with Eq. (\ref{momentHE}) is as shown on Fig \ref{MomentNewtonPolygonmomentHE} below. If any exists, we define $d^*$ by $d^*=\max\{d\in\{1,...,N\}:\ 2s_d>s_0\}$.
\begin{center}
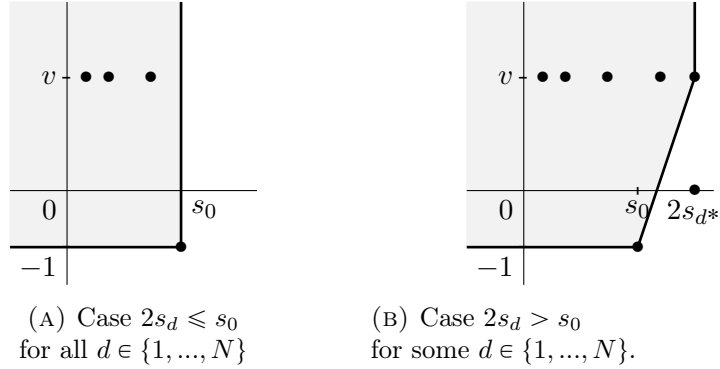
\begin{figure}[h]
	\begin{center}
		\begin{subfigure}[b]{6cm}
		\centering\begin{tikzpicture}
		\draw[fill=gray!10,color=gray!10] (-0.75,-0.75) -- (1.5,-0.75) -- (1.5,2.5) -- (-0.75,2.5) -- cycle;
		\draw(-0.75,0) -- (2.5,0);
		\draw(0,-1.25) -- (0,2.5);
		\draw (1.5,0) node[below right]{$s_0$};
		\draw (0,-0.75) node{-} node[below left]{$-1$};
		\draw[line width=1pt] (-0.75,-0.75) -- (1.5,-0.75) node{$\bullet$} -- (1.5,2.5);
		\draw (0,0) node[below left]{$0$};
		\draw (0,1.5) node{-} node[left]{$v$};
		\draw (0.25,1.5) node{$\bullet$}; \draw (0.55,1.5) node{$\bullet$}; \draw (1.1,1.5) node{$\bullet$};
		\end{tikzpicture}
		\caption{Case $2s_d\leq s_0$\\ for all $d\in\{1,...,N\}$}
		\end{subfigure}
		\begin{subfigure}[b]{6cm}
		\centering\begin{tikzpicture}[>=latex]
		\draw[fill=gray!10,color=gray!10] (-0.75,-0.75) -- (1.5,-0.75) -- (2.25,1.5) -- (2.25,2.5) -- (-0.75,2.5) -- cycle;
		\draw(-0.75,0) -- (2.5,0);
		\draw(0,-1.25) -- (0,2.5);
		\draw (1.5,0) node[rotate=90]{-}node[below]{$s_0$};
		\draw (2.25,0) node{$\bullet$} node[below]{$2s_{d^*}$};
		\draw (0,-0.75) node{-} node[below left]{$-1$};
		\draw[line width=1pt] (-0.75,-0.75) -- (1.5,-0.75) node{$\bullet$} -- (2.25,1.5) node{$\bullet$} -- (2.25,2.5);
		\draw (0,0) node[below left]{$0$};
		\draw (0,1.5) node{-} node[left]{$v$};
		\draw (0.25,1.5) node{$\bullet$}; \draw (0.55,1.5) node{$\bullet$}; \draw (1.1,1.5) node{$\bullet$}; \draw (1.8,1.5) node{$\bullet$};
		\end{tikzpicture}
		\caption{Case $2s_d>s_0$\\ for some $d\in\{1,...,N\}$.}
		\end{subfigure}
	\end{center}
	\caption{The moment Newton polygon associated with Eq. (\ref{momentHE})}\label{MomentNewtonPolygonmomentHE}
\end{figure}
\end{center}
The critical value of Eq. (\ref{momentHE}) is then defined by
$$\sigma_c=\begin{cases}
0&\text{if }2s_d\leq s_0\text{ for all }d\in\{1,...,N\}\\
\dfrac{2s_{d^*}-s_0}{1+v}&\text{otherwise}
\end{cases}$$
and the Gevrey regularity of the unique formal solution $\widetilde{u}(t,x)$ of Eq. (\ref{momentHE}) follows from Theorem \ref{GevreyRegularityTheorem}.
\end{example}
\begin{example}
Let us now consider the \textit{generalized regular moment Boussinesq equation}
\begin{equation}\label{momentBE}
\left\{
\begin{aligned}
&\partial_{m_0;t}^2 u-a(t,x)\partial_{m;x}^4u-P(t,x,u)\partial_{m;x}^2u-Q(t,x,u)(\partial_{m;x}u)^2=\widetilde{f}(t,x)\\
&\partial_{m_0;t}^ju(t,x)|_{t=0}=\varphi_j(x)\in\Oo(D_{\rho_1})\quad\textrm{for } j=0,1
\end{aligned}\right.
\end{equation}
in two variables $(t,x)\in\C^2$, where
\begin{itemize}
\item the coefficient $a(t,x)$ is analytic on a polydisc $D_{\rho_0,\rho_1}$ and $a(0,x)\not\equiv0$;
\item $P(t,x,X)$ and $Q(t,x,X)$ are two polynomials in $X$ with analytic coefficients on $D_{\rho_0,\rho_1}$;
\item $\widetilde{f}(t,x)\in\mathcal O(D_{\rho_1})[[t]]$.
\end{itemize}
The moment Newton polygon associated with Eq. (\ref{momentBE}) is as shown on Fig \ref{MomentNewtonPolygonmomentBE} below. In the latter, we have only shown the important points, the others being all included in the domain $C(4s_1,0)$.
\begin{center}
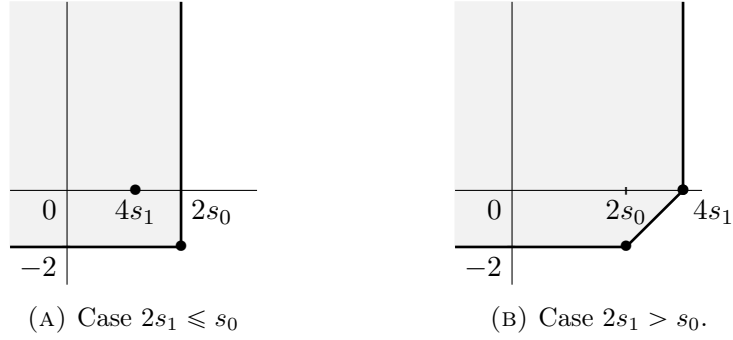
\begin{figure}[h]
	\begin{center}
		\begin{subfigure}[b]{6cm}
		\centering\begin{tikzpicture}
		\draw[fill=gray!10,color=gray!10] (-0.75,-0.75) -- (1.5,-0.75) -- (1.5,2.5) -- (-0.75,2.5) -- cycle;
		\draw(-0.75,0) -- (2.5,0);
		\draw(0,-1.25) -- (0,2.5);
		\draw (1.5,0)  node[below right]{$2s_0$};
		\draw (0.9,0) node{$\bullet$} node[below]{$4s_1$};
		\draw (0,-0.75) node{-} node[below left]{$-2$};
		\draw[line width=1pt] (-0.75,-0.75) -- (1.5,-0.75) node{$\bullet$} -- (1.5,2.5);
		\draw (0,0) node[below left]{$0$};
		\end{tikzpicture}
		\caption{Case $2s_1\leq s_0$}
		\end{subfigure}
		\begin{subfigure}[b]{6cm}
		\centering\begin{tikzpicture}[>=latex]
		\draw[fill=gray!10,color=gray!10] (-0.75,-0.75) -- (1.5,-0.75) -- (2.25,0) -- (2.25,2.5) -- (-0.75,2.5) -- cycle;
		\draw(-0.75,0) -- (2.5,0);
		\draw(0,-1.25) -- (0,2.5);
		\draw (1.5,0) node[rotate=90]{-}node[below]{$2s_0$};
		\draw (2.25,0) node{$\bullet$} node[below right]{$4s_1$};
		\draw (0,-0.75) node{-} node[below left]{$-2$};
		\draw[line width=1pt] (-0.75,-0.75) -- (1.5,-0.75) node{$\bullet$} -- (2.25,0) node{$\bullet$} -- (2.25,2.5);
		\draw (0,0) node[below left]{$0$};
		\end{tikzpicture}
		\caption{Case $2s_1>s_0$.}
		\end{subfigure}
	\end{center}
	\caption{The moment Newton polygon associated with Eq. (\ref{momentBE})}\label{MomentNewtonPolygonmomentBE}
\end{figure}
\end{center}

The critical value of Eq. (\ref{momentHE}) is then defined by
$$\sigma_c=\begin{cases}
0&\text{if }2s_1\leq s_0\\
2s_1-s_0&\text{otherwise}
\end{cases}$$
and the Gevrey regularity of the unique formal solution $\widetilde{u}(t,x)$ of Eq. (\ref{momentBE}) follows as previously from Theorem \ref{GevreyRegularityTheorem}.
\end{example}
\begin{example}
As a final example, let us look at the \textit{generalized regular moment Burgers-Korteweg-de Vries equation} (in short, the grmBKdV equation):
\begin{equation}\label{GeneralmomentBKVE}
\begin{cases}\partial_{m_0;t}u-P_{q_1}(t,x,u)\partial_{m;x}^{q_1}u-P_{q_2}(t,x,u)\partial_{m;x}^{q_2}u=\widetilde{f}(t,x)\\
u(0,x)=\varphi(x)\in\mathcal O(D_{\rho_1}),\quad q_1\geq q_2
\end{cases}
\end{equation}
in two variables $(t,x)\in\C^2$, where we assume the same conditions as before on $P_{q_1}(t,x,X)$, $P_{q_2}(t,x,X)$, $\widetilde{f}(t,x)$ and $\varphi(x)$. Denoting by $v_1$ (resp. $v_2$) the smallest valuation at $t=0$ of the coefficients of the polynomial $P_{q_1}(t,x,X)$ (resp. $P_{q_2}(t,x,X)$), the moment Newton polygon associated with Eq. (\ref{GeneralmomentBKVE}) is as shown on Fig \ref{MomentNewtonPolygonGeneralmomentBKVE} below. As was the case with the previous example,  we have only shown the important points, the others being all included, either in the domain $C(q_1s_1,v_1)$, or in the domain $C(q_2s_1,v_2)$.
\begin{center}
\begin{figure}[h]
	\begin{center}
		\begin{subfigure}[t]{3.75cm}
		\centering\begin{tikzpicture}
		\draw[fill=gray!10,color=gray!10] (-0.5,-0.5) -- (0.5,-0.5) -- (0.5,1.5) -- (-0.5,1.5) -- cycle;
		\draw(-0.5,0) -- (1.75,0); \draw(0,-0.75) -- (0,1.5);
		\draw (0,-0.5) node{-} node[below left]{\scriptsize{$-1$}}; \draw (0.5,0) node[rotate=90]{-} node[below right]{\scriptsize{$s_0$}} node[above right]{\scriptsize{$q_1s_1$}};
		\draw (0,1) node{-} node[left]{\scriptsize{$v_1$}}; \draw (0.5,1) node{$\bullet$};
		\draw (0,0.5) node{$\bullet$} node[left]{\scriptsize{$v_2$}}; \draw (0,0) node[below]{\scriptsize{$q_2s_1$}};
		\draw[line width=1pt] (-0.5,-0.5) -- (0.5,-0.5) node{$\bullet$} -- (0.5,1.5);
		\end{tikzpicture}
		\caption{\centering{Case $q_1s_1\leq s_0$}}\label{NPBKVE1}
		\end{subfigure}
		\begin{subfigure}[t]{3.75cm}
		\centering\begin{tikzpicture}
		\draw[fill=gray!10,color=gray!10] (-0.5,-0.5) -- (0.5,-0.5) -- (1,1) -- (1,1.5) -- (-0.5,1.5) -- cycle;
		\draw(-0.5,0) -- (1.75,0); \draw(0,-0.75) -- (0,1.5);
		\draw (0,-0.5) node{-} node[below left]{\scriptsize{$-1$}}; \draw (0.5,0) node[rotate=90]{-} node[below]{\scriptsize{$s_0$}};
		\draw (0,1) node{-}node[left]{\scriptsize{$v_1$}}; \draw (1,0) node[rotate=90]{-} node[below]{\scriptsize{$q_1s_1$}};
		\draw[line width=1pt] (-0.5,-0.5) -- (0.5,-0.5) node{$\bullet$} -- (1,1) node{$\bullet$} -- (1,1.5);
		\draw (0,0.5) node{$\bullet$} node[left]{\scriptsize{$v_2$}}; \draw (0,0) node[below]{\scriptsize{$q_2s_1$}};
		\end{tikzpicture}
		\caption{\centering{Case $q_2s_1\leq s_0<q_1s_1$}}\label{NPBKVE2}
		\end{subfigure}
		
		\begin{subfigure}[t]{3.75cm}
		\centering\begin{tikzpicture}
		\draw[fill=gray!10,color=gray!10] (-0.5,-0.5) -- (0.5,-0.5) -- (1,0.5) -- (1,1.5) -- (-0.5,1.5) -- cycle;
		\draw(-0.5,0) -- (1.75,0); \draw(0,-0.75) -- (0,1.5);
		\draw (0,-0.5) node{-} node[below left]{\scriptsize{$-1$}}; \draw (0.5,0) node[rotate=90]{-} node[below]{\scriptsize{$s_0$}};
		\draw (0,0.5) node{-}node[left]{\scriptsize{$v$}}; \draw (1,0) node[rotate=90]{-} node[below]{\scriptsize{$q_1s_1$}};
		\draw[line width=1pt] (-0.5,-0.5) -- (0.5,-0.5) node{$\bullet$} -- (1,0.5) node{$\bullet$} -- (1,1.5);
		\end{tikzpicture}
		\caption{\centering{Case $s_0<q_1s_1$; $q_1=q_2$; $v=\min(v_1,v_2)$}}\label{NPBKVE4}
		\end{subfigure}
		\begin{subfigure}[t]{3.75cm}
		\centering\begin{tikzpicture}
		\draw[fill=gray!10,color=gray!10] (-0.5,-0.5) -- (0.5,-0.5) -- (1.5,1) -- (1.5,1.5) -- (-0.5,1.5) -- cycle;
		\draw(-0.5,0) -- (1.75,0); \draw(0,-0.75) -- (0,1.5);
		\draw (0,-0.5) node{-} node[below left]{\scriptsize{$-1$}}; \draw (0.5,0) node[rotate=90]{-} node[below]{\scriptsize{$s_0$}};
		\draw (0,0.5) node{-}node[left]{\scriptsize{$v_2$}}; \draw (1,0) node[rotate=90]{-} node[below]{\scriptsize{$q_2s_1$}}; \draw (1,0.5) node{$\bullet$};
		\draw (0,1) node{-} node[left]{\scriptsize{$v_1$}}; \draw (1.5,0) node[rotate=90]{-} node[below]{\scriptsize{$q_1s_1$}};
		\draw[line width=1pt] (-0.5,-0.5) -- (0.5,-0.5) node{$\bullet$} -- (1.5,1) node{$\bullet$} -- (1.5,1.5);
		\end{tikzpicture}
		\caption{\centering{Case $s_0<q_2s_1<q_1s_1$ and $\dfrac{1+v_2}{q_2s_1-s_0}\geq\dfrac{1+v_1}{q_1s_1-s_0}$}}\label{NPBKVE5}
		\end{subfigure}
		\begin{subfigure}[t]{3.75cm}
		\centering\begin{tikzpicture}
		\draw[fill=gray!10,color=gray!10] (-0.5,-0.5) -- (0.5,-0.5) -- (1,0) -- (1.5,1) -- (1.5,1.5) -- (-0.5,1.5) -- cycle;
		\draw(-0.5,0) -- (1.75,0); \draw(0,-0.75) -- (0,1.5);
		\draw (0,-0.5) node{-} node[below left]{\scriptsize{$-1$}}; \draw (0.5,0) node[rotate=90]{-} node[below]{\scriptsize{$s_0$}};
		\draw (0,0) node[left]{\scriptsize{$v_2$}}; \draw (1,0) node[rotate=90]{-} node[below]{\scriptsize{$q_2s_1$}};
		\draw (0,1) node{-} node[left]{\scriptsize{$v_1$}}; \draw (1.5,0) node[rotate=90]{-} node[below]{\scriptsize{$q_1s_1$}};
		\draw[line width=1pt] (-0.5,-0.5) -- (0.5,-0.5) node{$\bullet$} -- (1,0) node{$\bullet$} -- (1.5,1) node{$\bullet$} -- (1.5,1.5);
		\end{tikzpicture}
		\caption{\centering{Case $s_0<q_2s_1<q_1s_1$ and $\dfrac{1+v_2}{q_2s_1-s_0}<\dfrac{1+v_1}{q_1s_1-s_0}$}}\label{NPBKVE6}
		\end{subfigure}
	\end{center}
	\caption{The moment Newton polygon associated with Eq. (\ref{GeneralmomentBKVE})}\label{MomentNewtonPolygonGeneralmomentBKVE}
\end{figure}
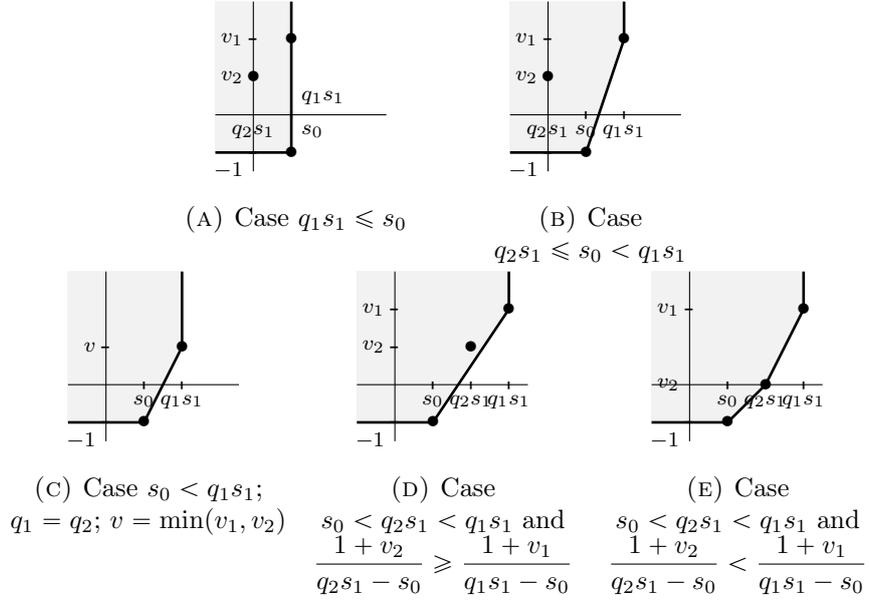
\end{center}

The critical value of Eq. (\ref{GeneralmomentBKVE}) is then given by
$$\sigma_c=\begin{cases}
0&\text{if }q_1s_1\leq s_0\\
\dfrac{q_1s_1-s_0}{1+v_1}&\text{if } q_2s_1\leq s_0<q_1s_1\\
\dfrac{q_1s_1-s_0}{1+v_1}&\text{if }s_0<q_2s_1<q_1s_1\text{ and }\dfrac{1+v_2}{q_1s_1-s_0}\leq\dfrac{1+v_2}{q_2s_1-s_0}\\
\dfrac{q_1s_1-s_0}{1+\min(v_1,v_2)}&\text{if } s_0<q_1s_1\text{ and }q_1=q_2\\
\dfrac{q_2s_1-s_0}{1+v_2}&\text{if }s_0<q_2s_1<q_1s_1\text{ and }\dfrac{1+v_2}{q_2s_1-s_0}<\dfrac{1+v_1}{q_1s_1-s_0}
\end{cases}$$
and the Gevrey regularity of the unique formal solution $\widetilde{u}(t,x)$ of Eq. (\ref{GeneralmomentBKVE}) follows again from Theorem \ref{GevreyRegularityTheorem}. In particular, this result provides the Gevrey regularity of the formal solution of the regular moment Korteweg-de Vries equation
$$\partial_{m_0;t} u+\partial_{m;x}^3 u-6u\partial_{m;x} u=\widetilde{f}(t,x),$$
and the Gevrey regularity of the formal solution of the regular moment Burgers equation
$$\partial_{m_0;t} u-\partial_{m;x}^2 u-2u\partial_{m;x} u=\widetilde{f}(t,x).$$
Indeed,  these two equations both correspond to the cases presented in Fig. \ref{NPBKVE1}, \ref{NPBKVE2} and \ref{NPBKVE5} and admit respectively the values
$$\sigma_c=\begin{cases}
0&\text{if }3s_1\leq s_0\\
3s_1-s_0&\text{otherwise}
\end{cases}\quad\text{and}\quad
\sigma_c=\begin{cases}
0&\text{if }2s_1\leq s_0\\
2s_1-s_0&\text{otherwise}
\end{cases}$$
as critical value.
\end{example}

\section{Proof of Theorem \ref{GevreyRegularityTheorem}}\label{section:proof}

The proof of Theorem \ref{GevreyRegularityTheorem} is detailed in the following two sections. The first point is the most technical and the most complicated. Its proof is based on the modified Nagumo norms, a technique of majorant series and a fixed point procedure (see Section \ref{FirstPointProof}). As for the second point, it stems both from the first one and from Proposition \ref{ex} that gives an explicit example for which $\widetilde{u}(t,x)$ is $\sigma'$-Gevrey for no $\sigma'<\sigma_c$ while $\widetilde{f}(t,x)$ is $\sigma$-Gevrey with $\sigma<\sigma_c$ (see Section \ref{SecondPointProof}).

\subsection{Proof of the first point of Theorem \ref{GevreyRegularityTheorem}}\label{FirstPointProof}
According to Propositiion \ref{GevreyAlgStructure} and Corollary \ref{StabilityGevreyMomentDerivative}, it is clear that
$$\widetilde{u}(t,x)\in\DomGn{\sigma}\Rightarrow\widetilde{f}(t,x)\in\DomGn{\sigma}.$$

Reciprocally, let us fix $\sigma\geq\sigma_c$, and let us assume that
$$\widetilde{f}(t,x)=\dsum{j\geq0}{}{\TC{f}{j}(x)\dfrac{t^j}{m_0(j)}}$$
is $\sigma$-Gevrey. By assumption (see Definition \ref{defi:gevrey-order}), there exist a radius $0<r<\min(\rho_1,...,\rho_N)$ and two positive constants $C,K>0$ such that $\m{\TC{f}{j}(x)}\leq CK^jm_0(j)\Gamma(1+\sigma j)$ for all $x\in\DCr$ and all $j\geq0$.

We must prove that the coefficients $\TC{u}{j}(x)$ of the formal solution $\widetilde{u}(t,x)$ satisfy similar inequalities. The approach we present below is analogous to the ones already developed in  \cite{BLR,R16,R17,R20} in the framework of linear partial and integro-differential equations, in \cite{R21e,R23a,R23b} in the case of nonlinear partial differential equations, and in \cite{Su21} for some linear moment partial differential equations. It is based on the modified Nagumo norms introduced in Section \ref{section:nagumo} and on a technique of majorant series.

\subsubsection{First step: some preliminary inequalities}

From relations (\ref{uj}) and (\ref{Cjn}), we first get the recurrence relations
\begin{multline*}
\dfrac{\TC{u}{j+\kappa}(x)}{m_0(j+\kappa)\Gamma(1+\sigma(j+\kappa))}=\dfrac{\TC{f}{j}(x)}{m_0(j+\kappa)\Gamma(1+\sigma(j+\kappa))}\\+
\dsum{n\in\mathcal I}{}{\dsum{(\underline{i},\underline{q},\underline{r})\in\Lambda_n}{}{\dsum{{\substack{j_0+j_1+...+\\j_{r_1+...+r_n}=j-v_{\underline{i},\underline{q},\underline{r}}}}}{}{A_{\underline{r},\underline{j},n}\TC{a}{\underline{i},\underline{q},\underline{r};j_0}(x)\times\\\dprod{\ell=1}{n}{\dprod{h=j_{r_1+...+r_{\ell-1}+1}}{j_{r_1+...+r_\ell}}{\partial_{m;x}^{q_\ell}\TC{u}{h+i_\ell}(x)}}}}}
\end{multline*}
starting with $\TC{u}{j}(x)=\varphi_j(x)$ for $j=0,...,\kappa-1$, with
$$A_{\underline{r},\underline{j},n}=\dfrac{1}{m_0(j+\kappa)\Gamma(1+\sigma(j+\kappa))}\dbinom{j}{j_0,...,j_{r_1+...+r_n}}_{m_0}.$$

Let us now consider the modified Nagumo norm of indices $((j+\kappa)\alpha_\sigma,r,s)$, where $\alpha_\sigma\in(\R^+)^N$ is the multi-index whose all components are equal to $(\sigma+s_0)(\kappa+v)$, with $v=\varsigma+\max v_{\underline{i},\underline{q},\underline{r}}$ and
$$\varsigma=\max\left(\dfrac{1-(\sigma+s_0)(\kappa+\max v_{\underline{i},\underline{q},\underline{r}})}{\sigma+s_0},\max_{(\underline{i},\underline{q},\underline{r})\in\bigcup_{n\in\mathcal I}\Lambda_n}\left(\dfrac{1}{(\sigma+s_0)\left(\kappa-\dsum{\ell=1}{n}{r_\ell i_\ell}+v_{\underline{i},\underline{q},\underline{r}}\right)}\right)\right).$$
Observe that, if the first value may be non-positive, the second value is always positive; hence, $\varsigma$ is positive. Observe also that the first value implies $\alpha_\sigma\geq1$.

Hence, from Proposition \ref{norm}:
\begin{multline*}
\dfrac{\MNN{\TC{u}{j+\kappa}}{(j+\kappa)\alpha_\sigma}{r}{s}}{m_0(j+\kappa)\Gamma(1+\sigma(j+\kappa))}\leq\dfrac{\MNN{\TC{f}{j}}{(j+\kappa)\alpha_\sigma}{r}{s}}{m_0(j+\kappa)\Gamma(1+\sigma(j+\kappa))}\\+
\dsum{n\in\mathcal I}{}{\dsum{(\underline{i},\underline{q},\underline{r})\in\Lambda_n}{}{\dsum{{\substack{j_0+j_1+...+\\j_{r_1+...+r_n}=j-v_{\underline{i},\underline{q},\underline{r}}}}}{}{A_{\underline{r},\underline{j},n}\times\\\MNN{\TC{a}{\underline{i},\underline{q},\underline{r};j_0}\dprod{\ell=1}{n}{\dprod{h=j_{r_1+...+r_{\ell-1}+1}}{j_{r_1+...+r_\ell}}{\partial_{m;x}^{q_\ell}\TC{u}{h+i_\ell}}}}{(j+\kappa)\alpha_\sigma}{r}{s}}}}.
\end{multline*}

Let us now write $(j+\kappa)\alpha_\sigma$ in the form
$$(j+\kappa)\alpha_\sigma=\left(\dsum{\ell=1}{n}{\dsum{h=j_{r_1+...+r_{\ell-1}+1}}{j_{r_1+...+r_\ell}}{(h+i_\ell)}}\right)\alpha_\sigma+\dsum{\ell=1}{n}{r_\ell q_\ell}+\alpha'_\sigma(j_0)$$
with
$$\alpha'_\sigma(j_0)=\left(j_0+\kappa-\dsum{\ell=1}{n}{r_\ell i_\ell}+v_{\underline{i},\underline{q},\underline{r}}\right)\alpha_\sigma-\dsum{\ell=1}{n}{r_\ell q_\ell}.$$
Observe here that Assumption \ref{ASS1} implies\label{alphaSigmaJ}
$$j_0+\kappa-\dsum{\ell=1}{n}{r_\ell i_\ell}+v_{\underline{i},\underline{q},\underline{r}}\geq\kappa-\dsum{\ell=1}{n}{r_\ell i_\ell}+v_{\underline{i},\underline{q},\underline{r}}>0$$
and that Proposition \ref{IneqMomentNewtonPolygon} and the definition of $\varsigma$ imply that the $d$-th component $\alpha'_{\sigma,d}(j_0)$ of $\alpha'_\sigma(j_0)$ satisfies for any $d=1,...,N$ the inequalities
\begin{align*}
\alpha'_{\sigma,d}(j_0)&\geq(1+\varsigma)(\sigma+s_0)\left(\kappa-\dsum{\ell=1}{n}{r_\ell i_\ell}+v_{\underline{i},\underline{q},\underline{r}}\right)-\dsum{\ell=1}{n}{r_\ell q_{\ell,d}}\\
&\geq\varsigma(\sigma+s_0)\left(\kappa-\dsum{\ell=1}{n}{r_\ell i_\ell}+v_{\underline{i},\underline{q},\underline{r}}\right)+s_0v_{\underline{i},\underline{q},\underline{r}}\geq1.
\end{align*}
Indeed, the order $s_d$ being $\geq1$, we have $\lambda(s q_\ell)\geq q_{\ell,d}$.

Applying then Proposition \ref{NagumoNormProduct} and Corollary \ref{NagumoNormDerivative}, we finally get
\begin{multline*}
\dfrac{\MNN{\TC{u}{j+\kappa}}{(j+\kappa)\alpha_\sigma}{r}{s}}{m_0(j+\kappa)\Gamma(1+\sigma(j+\kappa))}\leq\dfrac{\MNN{\TC{f}{j}}{(j+\kappa)\alpha_\sigma}{r}{s}}{m_0(j+\kappa)\Gamma(1+\sigma(j+\kappa))}+\\
\dsum{n\in\mathcal I}{}{\dsum{(\underline{i},\underline{q},\underline{r})\in\Lambda_n}{}{\dsum{{\substack{j_0+j_1+...+\\j_{r_1+...+r_n}=j-v_{\underline{i},\underline{q},\underline{r}}}}}{}{B_{\underline{i},\underline{q},\underline{r},\underline{j},n}(x)}}}
\end{multline*}
with
$$B_{\underline{i},\underline{q},\underline{r},\underline{j},n}(x)=B'_{\underline{i},\underline{q},\underline{r},\underline{j},n}\dprod{\ell=1}{n}{\dprod{h=j_{r_1+...+r_{\ell-1}+1}}{j_{r_1+...+r_\ell}}{\dfrac{\MNN{\TC{u}{h+i_\ell}}{(h+i_\ell)\alpha_\sigma}{r}{s}}{m_0(h+i_\ell)\Gamma(1+\sigma(h+i_\ell))}}}$$
 for all $j\geq v_{\underline{i},\underline{q},\underline{r}}$ and
\begin{multline*}
B'_{\underline{i},\underline{q},\underline{r},\underline{j},n}=\dfrac{\dbinom{j}{j_0,...,j_{r_1+...+r_n}}_{m_0}\MNN{\TC{a}{\underline{i},\underline{q},\underline{r};j_0}}{\alpha'_\sigma(j_0)}{r}{s}}{m_0(j+\kappa)\Gamma(1+\sigma(j+\kappa))}\times\\
\dprod{\ell=1}{n}{\dprod{h=j_{r_1+...+r_{\ell-1}+1}}{j_{r_1+...+r_\ell}}{\Bigg(C^{\lambda(q_\ell)}m_0(h+i_\ell)\Gamma(1+\sigma(h+i_\ell))}}\times\\
\left.\dprod{d=1}{N}{q_{\ell,d}!^{s_d}\dbinom{(h+i_\ell)(\sigma+s_0)(\kappa+v)+q_{\ell,d}-1}{q_{\ell,d}}^{s_d}}\right).
\end{multline*}

\subsubsection{Second step: bound of $B'_{\underline{i},\underline{q},\underline{r},\underline{j},n}$}
Since 
$$
\dbinom{j}{j_0,...,j_{r_1+...+r_n}}_{m_0}=\frac{m_0(j)}{m_0(j_0)\dprod{\ell=1}{n}{\dprod{h=j_{r_1}+\ldots+j_{r_{\ell-1}}+1}{j_{r_1}+\ldots+j_{r_\ell}}{m_0(h)}}},
$$
we can alternatively write
\begin{multline*}
B'_{\underline{i},\underline{q},\underline{r},\underline{j},n}=C^{\sum_{\ell=1}^{n}r_\ell\lambda(q_\ell)}\dfrac{m_0(j)\MNN{\TC{a}{\underline{i},\underline{q},\underline{r};j_0}}{\alpha'_\sigma(j_0)}{r}{s}}{m_0(j+\kappa)\Gamma(1+\sigma(j+\kappa))m_0(j_0)}\times\\
\dprod{\ell=1}{n}{\dprod{h=j_{r_1+...+r_{\ell-1}+1}}{j_{r_1+...+r_\ell}}{\Bigg(\frac{m_0(h+i_\ell)\Gamma(1+\sigma(h+i_\ell))}{m_0(h)}}}\times\\
\left.\dprod{d=1}{N}{q_{\ell,d}!^{s_d}\dbinom{(h+i_\ell)(\sigma+s_0)(\kappa+v)+q_{\ell,d}-1}{q_{\ell,d}}^{s_d}}\right).
\end{multline*}

Since $\kappa\geq1$ and $m_0$ is a regular moment function of order $s_0>0$, there exists a positive constant $C_1>0$ such that
\begin{equation}
\dfrac{m_0(j)}{m_0(j+\kappa)}=\dprod{k=0}{\kappa-1}{\dfrac{m_0(j+k)}{m_0(j+k+1)}}\leq\dfrac{C_1^\kappa}{(j+1)^{s_0}...(j+\kappa)^{s_0}}\leq\dfrac{C_1^\kappa}{(j+1)^{s_0\kappa}}.\label{eq:C_1}
\end{equation}

Let us now repeat this reasoning for $\frac{m_0(h+i_\ell)}{m_0(h)}$, with a fixed $\ell\in\{1,2,\ldots,n\}$. If $i_\ell\geq1$, there exists a positive constant $\widehat{C}_\ell$ such that
$$\dfrac{m_0(h+i_\ell)}{m_0(h)}=\dprod{k=0}{i_\ell-1}{\dfrac{m_0(j+k+1)}{m_0(j+k)}}\leq \widehat{C}_\ell^{i_\ell}(h+1)^{s_0}...(h+i_\ell)^{s_0}\leq \widehat{C}_\ell^{i_\ell}i_\ell!^{s_0}(j+1)^{s_0i_\ell}$$
for all $h=j_{r_1+\ldots+r_{\ell-1}+1},\ldots,j_{r_\ell}$ (we have indeed $h\leq j_{r_1+\ldots+r_{\ell}}\leq j$). Observe that such inequality remains valid when $i_\ell=0$. Consequently,
$$\dprod{h=j_{r_1+\ldots+r_{\ell-1}+1}}{j_{r_1+\ldots+r_{\ell}}}{\dfrac{m_0(h+i_\ell)}{m_0(h)}}\leq \widehat{C}_\ell^{r_\ell i_\ell}i_\ell!^{r_\ell s_0}(j+1)^{s_0r_\ell i_\ell},$$
and there exists a positive constant $C_2=\dprod{\ell=1}{n}{\widehat{C}_\ell^{r_\ell i_\ell}i_\ell!^{s_0r_\ell}}>0$ such that
\begin{equation}
\dprod{\ell=1}{n}{\dprod{h=j_{r_1+\ldots+r_{\ell-1}+1}}{j_{r_1+\ldots+r_{\ell}}}{\dfrac{m_0(h+i_\ell)}{m_0(h)}}}\le C_2(j+1)^{s_0\sum_{\ell=1}^n r_\ell i_\ell}\label{eq:C_2}
\end{equation}

Let us now observe that for all $j\geq v_{\underline{i},\underline{q},\underline{r}}$, we have
\begin{multline*}
\dfrac{\dprod{\ell=1}{n}{\dprod{h=j_{r_1+\ldots r_{\ell-1}+1}}{j_{r_1+\ldots r_\ell}}{\Gamma(1+\sigma(h+i_\ell))}}}{\Gamma(1+\sigma(j+\kappa))}=\dfrac{\Gamma(1+\sigma(j-v_{\underline{i},\underline{q},\underline{r}}+\sum_{\ell=1}^n r_\ell i_\ell))}{\Gamma(1+\sigma(j+\kappa))}\\
\times\dfrac{1}{\Gamma(1+\sigma j_0)}\dfrac{1}{\dbinom{\sigma(j-v_{\underline{i},\underline{q},\underline{r}}+\sum_{\ell=1}^n r_\ell i_\ell)}{\sigma j_0,\sigma (j_1+i_1),\ldots,\sigma(j_{r_1+\ldots r_n}+i_n)}}.
\end{multline*}
Applying the Stirling's Formula and Assumption \ref{ASS1}, we easily check that there exists a positive constant $C_3$ such that
\begin{equation*}
\dfrac{\Gamma(1+\sigma(j-v_{\underline{i},\underline{q},\underline{r}}+\sum_{\ell=1}^n r_\ell i_\ell))}{\Gamma(1+\sigma(j+\kappa))}\le C_3(j+1)^{-\sigma(\kappa-\sum_{\ell=1}^n r_\ell i_\ell+v_{\underline{i},\underline{q},\underline{r}})}.
\end{equation*}
Consequently, since we also have
$$\dbinom{\sigma(j-v_{\underline{i},\underline{q},\underline{r}}+\sum_{\ell=1}^n r_\ell i_\ell)}{\sigma j_0,\sigma (j_1+i_1),\ldots,\sigma(j_{r_1+\ldots r_n}+i_n)}\geq1,$$
we deduce that

\begin{equation}
\dfrac{\dprod{\ell=1}{n}{\dprod{h=j_{r_1+\ldots r_{\ell-1}+1}}{j_{r_1+\ldots r_\ell}}{\Gamma(1+\sigma(h+i_\ell))}}}{\Gamma(1+\sigma(j+\kappa))}\leq C_3\dfrac{(j+1)^{-\sigma(\kappa-\sum_{\ell=1}^n r_\ell i_\ell+v_{\underline{i},\underline{q},\underline{r}})}}{\Gamma(1+\sigma j_0)}.\label{eq:C_3}
\end{equation}

Let us also notice that for any $\ell=1,2,\ldots,n$, $h=j_{r_1+\ldots+r_{\ell-1}+1},\ldots,j_{r_1+\ldots+r_{\ell}}$ and $d=1,2,\ldots,N$ we have
\begin{multline*}
 q_{\ell,d}!^{s_d}\dbinom{(h+i_\ell)(\sigma+s_0)(\kappa+v)+q_{\ell,d}-1}{q_{\ell,d}}^{s_d}\\=\left(\frac{\Gamma((h+i_\ell)(\sigma+s_0)(\kappa+v)+q_{\ell,d})}{\Gamma((h+i_\ell)(\sigma+s_0)(\kappa+v))}\right)^{s_d}\\
 =\left(\dprod{k=0}{q_{\ell,d}-1}{\Big((h+i_\ell)(\sigma+s_0)(\kappa+v)+k\Big)\right)^{s_d}}.
\end{multline*}
Moreover, since $s_d\ge 1$ for all $d=1,2,\ldots,N$ and $r_\ell\ge 1$ for every $\ell=1,2,\ldots,n$, it follows from Proposition \ref{IneqMomentNewtonPolygon} that 
\begin{equation*}
 (\sigma+s_0)(\kappa+v)\ge(\sigma+s_0)\dsum{\ell=1}{n}{r_\ell i_\ell}+s_0v_{\underline{i},\underline{q},\underline{r}}+\dsum{\ell=1}{n}{r_\ell\lambda(sq_\ell)}\ge q_{\ell,d}.
\end{equation*}
Hence, $k\le (\sigma+s_0)(\kappa+v)$ for every $k=0,1,\ldots,q_{\ell,d}-1$ and we receive
\begin{multline*}
 \left(\dprod{k=0}{q_{\ell,d}-1}{\Big((h+i_\ell)(\sigma+s_0)(\kappa+v)+k\Big)\right)^{s_d}}\\\le \Big((\sigma+s_0)(\kappa+v)(h+i_\ell+1)\Big)^{s_dq_{\ell,d}}\\\le\Big((\sigma+s_0)(\kappa+v)(i_\ell+1)\Big)^{s_dq_{\ell,d}}(h+1)^{s_dq_{\ell,d}}
\\\le\Big((\sigma+s_0)(\kappa+v)(i_\ell+1)\Big)^{s_dq_{\ell,d}}(j+1)^{s_dq_{\ell,d}}
\end{multline*}
Hence, there exists a positive constant
$$C_4=\dprod{\ell=1}{n}{\Big((\sigma+s_0)(\kappa+v)(i_\ell+1)\Big)^{r_\ell\lambda(sq_{\ell})}}>0$$
such that
\begin{multline}
 \dprod{\ell=1}{n}{\dprod{h=j_{r_1+\ldots+r_{\ell-1}+1}}{j_{r_1+\ldots+r_{\ell}}}{\dprod{d=1}{N}{q_{\ell,d}!^{s_d}\dbinom{(h+i_\ell)(\sigma+s_0)(\kappa+v)+q_{\ell,d}-1}{q_{\ell,d}}^{s_d}}}}\\\leq C_4(j+1)^{\sum_{\ell=1}^n r_\ell\lambda(sq_\ell)}
 \label{eq:C_4}
\end{multline}

Combining results from (\ref{eq:C_1}), (\ref{eq:C_2}), (\ref{eq:C_3}) and (\ref{eq:C_4}) we finally receive that there exists a positive constant $C_5>0$ such that
\begin{multline*}
 B'_{\underline{i},\underline{q},\underline{r},\underline{j},n}\le \dfrac{C_5\MNN{\TC{a}{\underline{i},\underline{q},\underline{r};j_0}}{\alpha'_\sigma(j_0)}{r}{s}}{\Gamma(1+\sigma j_0)m_0(j_0)}(j+1)^{(\sigma+s_0)(\sum_{\ell=1}^n r_\ell i_\ell -\kappa)-\sigma v_{\underline{i},\underline{q},\underline{r}}+\sum_{\ell=1}^n r_\ell\lambda(sq_\ell)}
\end{multline*}
From Proposition \ref{IneqMomentNewtonPolygon} we further receive an inequality
\begin{multline*}
(\sigma+s_0)\left(\sum_{\ell=1}^n r_\ell i_\ell -\kappa-v_{\underline{i},\underline{q},\underline{r}}\right)+s_0v_{\underline{i},\underline{q},\underline{r}}+\sum_{\ell=1}^n r_\ell\lambda(sq_\ell)\\\le -s_0v_{\underline{i},\underline{q},\underline{r}}-\sum_{\ell=1}^n r_\ell\lambda(sq_\ell)+s_0v_{\underline{i},\underline{q},\underline{r}}+\sum_{\ell=1}^n r_\ell\lambda(sq_\ell)=0,
\end{multline*}
from which it follows that
\begin{equation}
 B'_{\underline{i},\underline{q},\underline{r},\underline{j},n}\le \dfrac{C_5\MNN{\TC{a}{\underline{i},\underline{q},\underline{r};j_0}}{\alpha'_\sigma(j_0)}{r}{s}}{\Gamma(1+\sigma j_0)m_0(j_0)}.\label{eq:B'}
\end{equation}

Using (\ref{eq:B'}) we further conclude that
\begin{multline*}
B_{\underline{i},\underline{q},\underline{r},\underline{j},n}(x)\le \dfrac{C_5\MNN{\TC{a}{\underline{i},\underline{q},\underline{r};j_0}}{\alpha'_\sigma(j_0)}{r}{s}}{\Gamma(1+\sigma j_0)m_0(j_0)}\times\\\dprod{\ell=1}{n}{\dprod{h=j_{r_1+...+r_{\ell-1}+1}}{j_{r_1+...+r_\ell}}{\dfrac{\MNN{\TC{u}{h+i_\ell}}{(h+i_\ell)\alpha_\sigma}{r}{s}}{m_0(h+i_\ell)\Gamma(1+\sigma(h+i_\ell))}}}
\end{multline*}
for all $j\geq v_{\underline{i},\underline{q},\underline{r}}$, and consequently
\begin{multline}\label{eq:MNNuj}
\dfrac{\MNN{\TC{u}{j+\kappa}}{(j+\kappa)\alpha_\sigma}{r}{s}}{m_0(j+\kappa)\Gamma(1+\sigma(j+\kappa))}\leq\dfrac{\MNN{\TC{f}{j}}{(j+\kappa)\alpha_\sigma}{r}{s}}{m_0(j+\kappa)\Gamma(1+\sigma(j+\kappa))}
\\+
\dsum{n\in\mathcal I}{}{\dsum{(\underline{i},\underline{q},\underline{r})\in\Lambda_n}{}{\dsum{{\substack{j_0+j_1+...+\\j_{r_1+...+r_n}=j-v_{\underline{i},\underline{q},\underline{r}}}}}{}{\dfrac{C_5\MNN{\TC{a}{\underline{i},\underline{q},\underline{r};j_0}}{\alpha'_\sigma(j_0)}{r}{s}}{\Gamma(1+\sigma j_0)m_0(j_0)}\times\\\dprod{\ell=1}{n}{\dprod{h=j_{r_1+...+r_{\ell-1}+1}}{j_{r_1+...+r_\ell}}{\dfrac{\MNN{\TC{u}{h+i_\ell}}{(h+i_\ell)\alpha_\sigma}{r}{s}}{m_0(h+i_\ell)\Gamma(1+\sigma(h+i_\ell))}}}}}}
\end{multline}

We shall now bound the modified Nagumo norms $\MNN{\TC{u}{j}}{j\alpha_\sigma}{r}{s}$ for any $j\geq0$. To do that, we shall use the classical majorant series method.

\subsubsection{Third step: the majorant series method}
First of all, let us set
$$g_{j,s}=\dfrac{\MNN{\TC{f}{j}}{(j+\kappa)\alpha_\sigma}{r}{s}}{m_0(j+\kappa)\Gamma(1+\sigma(j+\kappa))}\quad\text{and}\quad\alpha_{\underline{i},\underline{q},\underline{r},j,s}=\dfrac{C_5\MNN{\TC{a}{\underline{i},\underline{q},\underline{r};j}}{\alpha'_\sigma(j)}{r}{s}}{\Gamma(1+\sigma j)m_0(j)},$$
and let us prove the following technical lemma.

\begin{lemma}\label{MajorantGjsAlphaiqrjs}
There exist four positive constants $B',B'',C',C''>0$ such that the following inequalities hold for all $j\geq0$:
$$g_{j,s}\leq C' B'^j\quad\text{and}\quad\alpha_{\underline{i},\underline{q},\underline{r},j,s}\leq C''B''^j.$$
\end{lemma}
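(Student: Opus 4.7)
The plan is that each of the two inequalities follows by translating a Gevrey estimate into a modified Nagumo-norm estimate via Proposition \ref{prop:gevrey_norm}, shifting the index in the norm by means of Corollary \ref{NagumoNormProductWithOne}, and then bounding the remaining ratios of moment values and Gamma functions with the help of the inequalities (\ref{IneqMomentFunction}) and Stirling's formula.

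\textbf{Bound for $g_{j,s}$.} The assumption that $\widetilde{f}(t,x)=\sum\TC{f}{j}(x)t^j/m_0(j)$ is $\sigma$-Gevrey is exactly the statement that the Taylor series $\sum\bigl(\TC{f}{j}(x)/m_0(j)\bigr)t^j$ is $\sigma$-Gevrey in the sense of Definition \ref{defi:gevrey-order}. Applying Proposition \ref{prop:gevrey_norm} with $\alpha=\alpha_\sigma\in[1,+\infty[^N$ (this membership was already verified in the course of the main proof) and then using positive-homogeneity of the norm yields $\MNN{\TC{f}{j}}{j\alpha_\sigma}{r}{s}\le A B^j m_0(j)\Gamma(1+\sigma j)$. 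Since $\kappa\alpha_\sigma\in[1,+\infty[^N$, Corollary \ref{NagumoNormProductWithOne} absorbs the index shift at the cost of a factor $r^{\kappa\lambda(\alpha_\sigma)}$, so that
\[
\MNN{\TC{f}{j}}{(j+\kappa)\alpha_\sigma}{r}{s}\le r^{\kappa\lambda(\alpha_\sigma)}AB^{j}m_0(j)\Gamma(1+\sigma j).
\]
Dividing by $m_0(j+\kappa)\Gamma(1+\sigma(j+\kappa))$ leaves the ratios $m_0(j)/m_0(j+\kappa)$ and $\Gamma(1+\sigma j)/\Gamma(1+\sigma(j+\kappa))$. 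By the regularity of $m_0$ and by Stirling's formula, both decay at least polynomially in $j$ and are in any case bounded by a constant, whence $g_{j,s}\le C'B'^{j}$.

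\textbf{Bound for $\alpha_{\underline{i},\underline{q},\underline{r},j,s}$.} Each coefficient $a_{\underline{i},\underline{q},\underline{r}}(t,x)$ is analytic on $\DCN$, so the Taylor series $\sum\bigl(\TC{a}{\underline{i},\underline{q},\underline{r};j}(x)/m_0(j)\bigr)t^j$ converges near the origin and is therefore $0$-Gevrey. Applying Proposition \ref{prop:gevrey_norm} with $\sigma=0$ and $\alpha=\alpha_\sigma$ (and again using positive-homogeneity) gives $\MNN{\TC{a}{\underline{i},\underline{q},\underline{r};j}}{j\alpha_\sigma}{r}{s}\le A_2B_2^{j}m_0(j)$. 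The target index $\alpha'_\sigma(j)$ decomposes as $j\alpha_\sigma+\gamma_{\underline{i},\underline{q},\underline{r}}$ where $\gamma_{\underline{i},\underline{q},\underline{r}}=(\kappa-\sum_\ell r_\ell i_\ell+v_{\underline{i},\underline{q},\underline{r}})\alpha_\sigma-\sum_\ell r_\ell q_\ell$; the crucial point is that each component of $\gamma_{\underline{i},\underline{q},\underline{r}}$ is $\ge 1$, which is precisely the componentwise bound $\alpha'_{\sigma,d}(0)\ge 1$ already verified on page \pageref{alphaSigmaJ}. Thus Corollary \ref{NagumoNormProductWithOne} applies once more and shifts the index from $j\alpha_\sigma$ to $\alpha'_\sigma(j)$ at the cost of a factor $r^{\lambda(\gamma_{\underline{i},\underline{q},\underline{r}})}$. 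Dividing the resulting bound by $\Gamma(1+\sigma j)m_0(j)$ cancels the factor $m_0(j)$ and leaves $1/\Gamma(1+\sigma j)$, which is bounded since $\sigma\ge 0$. Finally, since the set of tuples $(\underline{i},\underline{q},\underline{r})$ is finite, the finitely many constants can be replaced by uniform $C'',B''$.

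The mild technicality that needs the most attention, and the only step that is not purely mechanical, is the verification that the offset $\alpha'_\sigma(j)-j\alpha_\sigma$ lies componentwise in $[1,+\infty[$: this is exactly what permits Corollary \ref{NagumoNormProductWithOne} to be invoked in the second estimate, and it itself rests on the geometric inequality of Proposition \ref{IneqMomentNewtonPolygon} together with the ad hoc definition of $\varsigma$ (and hence of $\alpha_\sigma$) in the outer proof. Once this positivity is in place, everything reduces to elementary Stirling-type manipulations of ratios of Gamma functions and moment values.
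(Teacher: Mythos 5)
Your proof is correct and follows essentially the same route as the paper's: both bounds are obtained by combining Proposition \ref{prop:gevrey_norm} (applied with $\sigma$-Gevrey data for $\widetilde{f}$ and with $\sigma=0$ for the analytic coefficients $a_{\underline{i},\underline{q},\underline{r}}$), the index shift of Corollary \ref{NagumoNormProductWithOne}, and elementary bounds on the ratios $m_0(j)/m_0(j+\kappa)$ and of the Gamma factors. Your emphasis on checking that $\alpha'_\sigma(j)-j\alpha_\sigma\geq1$ componentwise is exactly the point the paper flags as the one non-mechanical step.
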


\begin{proof}
From Corollary \ref{NagumoNormProductWithOne}, we first deduce the inequality
$$g_{j,s}\leq\dfrac{\MNN{\TC{f}{j}}{j\alpha_\sigma}{r}{s}}{m_0(j)\Gamma(1+\sigma j)}\times\dfrac{r^{\lambda(\kappa\alpha_\sigma)}\Gamma(1+\sigma\kappa)}{\dbinom{\sigma(j+\kappa)}{\sigma j}}\times\dfrac{m_0(j)}{m_0(j+\kappa)}.$$
The sought inequality follows then from Proposition \ref{prop:gevrey_norm}, inequality (\ref{eq:C_1}) and the fact that $\dbinom{\sigma(j+\kappa)}{\sigma j}\geq1$.

The second inequality on $\alpha_{\underline{i},\underline{q},\underline{r},j,s}$ is proved in a similar way (we use the fact that $\TC{a}{\underline{i},\underline{q},\underline{r};j}(x)$ is analytic on $\DCn$; hence $0$-Gevrey, and calculations from page \pageref{alphaSigmaJ} to check that $\alpha'_\sigma(j)-j\alpha_\sigma\geq1$ in order to apply Corollary \ref{NagumoNormProductWithOne}).
\end{proof}

Let us  now consider the formal power series $v(X)=\displaystyle\sum_{j\geq0}v_jX^j$, the coefficients of which are recursively determined for all $j\geq0$ by the relations
\begin{equation}\label{vj}
v_{j+\kappa}=g_{j,s}+\dsum{n\in\mathcal I}{}{\dsum{(\underline{i},\underline{q},\underline{r})\in\Lambda_n}{}{\dsum{{\substack{j_0+j_1+...+j_{\widetilde{r}}\\=j+\sum_{\ell=1}^{n}r_\ell i_\ell-v_{\underline{i},\underline{q},\underline{r}}}}}{}{\alpha_{\underline{i},\underline{q},\underline{r},j_0,s}v_{j_1}...v_{j_{\widetilde{r}}}}}}
\end{equation}
starting with the initial conditions
\begin{align*}
v_0&=1+\dfrac{\MNN{\varphi_0}{0}{r}{s}}{m_0(0)}\text{, and, for $j=1,...,\kappa-1$ (if $\kappa\geq2$):}\\
v_j&=\dfrac{\MNN{\varphi_j}{j\alpha_\sigma}{r}{s}}{m_0(j)\Gamma(1+\sigma j)}+\sum_{(\underline{i},\underline{q},\underline{r})\in V_j}\sum_{\substack{j_0+j_1+...+j_{\widetilde{r}}\\=j-\kappa+\sum_{\ell=1}^{n}r_\ell i_\ell-v_{\underline{i},\underline{q},\underline{r}}}}\alpha_{\underline{i},\underline{q},\underline{r},j_0,s}v_{j_1}...v_{j_{\widetilde{r}}},
\end{align*}
where
$$\widetilde{r}=\max_{(\underline{i},\underline{q},\underline{r})\in\bigcup_{n\in\mathcal I}\Lambda_n}(r_1+...+r_n),$$
and where
$$V_j=\left\{(\underline{i},\underline{q},\underline{r})\in\bigcup_{n\in\mathcal I}\Lambda_n\text{ such that }j-\kappa+\dsum{\ell=1}{n}{r_\ell i_\ell}-v_{\underline{i},\underline{q},\underline{r}}\geq0\right\}.$$
Observe that Assumption \ref{ASS1} implies
$$j-\kappa+\dsum{\ell=1}{n}{r_\ell i_\ell}-v_{\underline{i},\underline{q},\underline{r}}<j;$$
hence, the initial conditions on the $v_j$'s with $j=1,...,\kappa-1$ make sense.

\begin{proposition}\label{MS}
The inequalities
\begin{equation}\label{MSj}
0\leq\dfrac{\MNN{\TC{u}{j}}{j\alpha_\sigma}{r}{s}}{m_0(j)\Gamma(1+\sigma j)}\leq v_j
\end{equation}
hold for all $j\geq0$.
\end{proposition}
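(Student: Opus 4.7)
The plan is to proceed by induction on $j$. For the base cases $j=0,\ldots,\kappa-1$, the inequality follows directly from the initial conditions $\TC{u}{j}(x)=\varphi_j(x)$ together with the definition of the $v_j$'s: when $j=0$ one has $j\alpha_\sigma=0$ and $\Gamma(1)=1$, so $\MNN{\varphi_0}{0}{r}{s}/m_0(0)\leq v_0$ by construction, while for $1\leq j\leq\kappa-1$ the definition of $v_j$ contains $\MNN{\varphi_j}{j\alpha_\sigma}{r}{s}/(m_0(j)\Gamma(1+\sigma j))$ as its first summand and the remaining summands are nonnegative.

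For the inductive step, assuming (\ref{MSj}) holds for all indices $\leq j+\kappa-1$, I would start from the recurrence (\ref{eq:MNNuj}) obtained in the second step and apply the induction hypothesis to each factor of the form $\MNN{\TC{u}{h+i_\ell}}{(h+i_\ell)\alpha_\sigma}{r}{s}/(m_0(h+i_\ell)\Gamma(1+\sigma(h+i_\ell)))$, replacing it by $v_{h+i_\ell}$. This is legitimate because $h\leq j-v_{\underline{i},\underline{q},\underline{r}}\leq j$ (being one summand in a partition of $j-v_{\underline{i},\underline{q},\underline{r}}$ into nonnegative integers) and $i_\ell\leq\kappa-1$, so $h+i_\ell\leq j+\kappa-1<j+\kappa$. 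After this substitution, I would perform the change of variable $j_k'=j_k+i_{\ell(k)}$ for each position $k$ in the multi-index (where $\ell(k)$ denotes the block to which position $k$ belongs), which transforms the constraint $j_0+j_1+\ldots+j_{r_1+\ldots+r_n}=j-v_{\underline{i},\underline{q},\underline{r}}$ into $j_0+j_1'+\ldots+j_{r_1+\ldots+r_n}'=j-v_{\underline{i},\underline{q},\underline{r}}+\sum_\ell r_\ell i_\ell$, precisely matching the summation appearing in the recurrence (\ref{vj}) for $v_{j+\kappa}$.

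The only point to handle carefully -- the main obstacle in an otherwise routine bookkeeping argument -- is that the translated sum runs only over tuples with $j_k'\geq i_{\ell(k)}$, whereas the sum in (\ref{vj}) ranges over all nonnegative tuples summing to $j-v_{\underline{i},\underline{q},\underline{r}}+\sum_\ell r_\ell i_\ell$. Since every term $\alpha_{\underline{i},\underline{q},\underline{r},j_0,s}v_{j_1'}\cdots v_{j_{\widetilde{r}}'}$ is nonnegative, this discrepancy only enlarges the right-hand side, preserving the desired inequality. Combined with the matching $g_{j,s}$ term -- which appears identically in (\ref{eq:MNNuj}) and (\ref{vj}) -- this yields $\MNN{\TC{u}{j+\kappa}}{(j+\kappa)\alpha_\sigma}{r}{s}/(m_0(j+\kappa)\Gamma(1+\sigma(j+\kappa)))\leq v_{j+\kappa}$, closing the induction and giving (\ref{MSj}) for all $j\geq 0$.
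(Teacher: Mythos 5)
Your overall strategy is the paper's: induction with the base cases read off from the initial conditions, substitution of the inductive hypothesis into (\ref{eq:MNNuj}), the index shift $j_k'=j_k+i_{\ell(k)}$, and nonnegativity of the terms to absorb the fact that the shifted sum ranges only over a subset of tuples. There is, however, one step you skip, and it is precisely the step for which the majorant sequence was engineered. After your change of variables, each term carries a product of $r_1+\cdots+r_n$ factors $v_{j_1}\cdots v_{j_{r_1+\cdots+r_n}}$, whereas the sum in (\ref{vj}) runs over tuples $(j_0,j_1,\ldots,j_{\widetilde r})$ of length $\widetilde r+1$ and each of its terms is a product of $\widetilde r$ factors, with $\widetilde r=\max(r_1+\cdots+r_n)$ taken over \emph{all} tuples in $\bigcup_{n\in\mathcal I}\Lambda_n$. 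So your claim that the shifted sum ``precisely matches'' the summation in (\ref{vj}) fails whenever $r_1+\cdots+r_n<\widetilde r$ for the tuple at hand, and nonnegativity alone does not compare products with different numbers of factors.

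The missing observation is that $v_0\geq1$ --- this is exactly why the paper defines $v_0=1+\MNN{\varphi_0}{0}{r}{s}/m_0(0)$ with the extra $1$. It allows one to write
\begin{equation*}
\alpha_{\underline{i},\underline{q},\underline{r},j_0,s}\,v_{j_1}\cdots v_{j_{r_1+\cdots+r_n}}\leq \alpha_{\underline{i},\underline{q},\underline{r},j_0,s}\,v_{j_1}\cdots v_{j_{r_1+\cdots+r_n}}\,v_0^{\widetilde r-r_1-\cdots-r_n},
\end{equation*}
that is, to regard each tuple of length $r_1+\cdots+r_n+1$ as a tuple of length $\widetilde r+1$ padded with zeros; only then does enlarging to the full sum over $j_0+j_1+\cdots+j_{\widetilde r}=j+\sum_{\ell=1}^{n}r_\ell i_\ell-v_{\underline{i},\underline{q},\underline{r}}$ (by nonnegativity, as you argue) produce exactly $v_{j+\kappa}$. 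With this padding step inserted, your argument closes the induction and coincides with the paper's proof.
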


\begin{proof}
According to the initial conditions on the $u_j$'s and on the $v_j$'s, the inequalities (\ref{MSj}) hold for all $j=0,...,\kappa-1$. Let us now suppose that these inequalities are true for all $k\leq j-1+\kappa$ for a certain $j\geq0$, and let us prove them for $j+\kappa$.

First of all, applying our hypotheses to relations (\ref{eq:MNNuj}), we have
\begin{multline}\label{FirstIneq}
0\leq\dfrac{\MNN{\TC{u}{j+\kappa}}{(j+\kappa)\alpha_\sigma}{r}{s}}{m_0(j)\Gamma(1+\sigma(j+\kappa))}\leq g_{j,s}+\\
\dsum{n\in\mathcal I}{}{\dsum{(\underline{i},\underline{q},\underline{r})\in\Lambda_n}{}{\dsum{{\substack{j'_0+j'_1+...+\\j'_{r_1+...+r_n}=j-v_{\underline{i},\underline{q},\underline{r}}}}}{}{\alpha_{\underline{i},\underline{q},\underline{r},j'_0,s}\dprod{\ell=1}{n}{\dprod{h=j'_{r_1+...+r_{\ell-1}+1}}{j'_{r_1+...+r_\ell}}{v_{h+i_\ell}}}}}}
\end{multline}
and then
\begin{multline}\label{SecondIneq}
0\leq\dfrac{\MNN{\TC{u}{j+\kappa}}{(j+\kappa)\alpha_\sigma}{r}{s}}{m_0(j)\Gamma(1+\sigma(j+\kappa))}\leq g_{j,s}+\\
\dsum{n\in\mathcal I}{}{\dsum{(\underline{i},\underline{q},\underline{r})\in\Lambda_n}{}{\dsum{{\substack{j_0+j_1+...+j_{r_1+...+r_n}\\=j+\sum_{\ell=1}^{n}r_\ell i_\ell-v_{\underline{i},\underline{q},\underline{r}}}}}{}{\alpha_{\underline{i},\underline{q},\underline{r},j_0,s}v_{j_1}...v_{j_{r_1+...+r_n}}}}}
\end{multline}
since all the tuples $(j_0',j'_1,...,j'_{r_1+...+r_n})$ in (\ref{FirstIneq}) satisfy
$$\dsum{\ell=1}{n}{\dsum{h=j'_{r_1+...+r_{\ell-1}+1}}{j'_{r_1+...+r_\ell}}{(h+i_\ell)}}=j+\sum_{\ell=1}^{n}r_\ell i_\ell-v_{\underline{i},\underline{q},\underline{r}}$$
and since all the terms $\alpha_{i,q,p,j_0,s}v_{j_1}...v_{j_{r_1+...+r_n}}$ in (\ref{SecondIneq}) are nonnegative.

Next, let us observe that any tuple $(j_0,...,j_{r_1+...+r_n})\in\mathbb N^{r_1+...+r_n+1}$ such that $j_0+...+j_{r_1+...+r_n}=j+\sum_{\ell=1}^{n}r_\ell i_\ell-v_{\underline{i},\underline{q},\underline{r}}$ can be seen as the tuple $(j_0,...,j_{r_1+...+r_n},j_{r_1+...+r_n+1},...,j_{\widetilde{r}})\in\mathbb N^{\widetilde{r}+1}$, where $j_{r_1+...+r_n+1}=...=j_{\widetilde{r}}=0$. Therefore, using the fact that $v_0\geq1$, we have
\begin{align*}
0\leq\alpha_{\underline{i},\underline{q},\underline{r},j_0,s}v_{j_1}...v_{j_{r_1+...+r_n}}&\leq\alpha_{\underline{i},\underline{q},\underline{r},j_0,s}v_{j_1}...v_{j_{r_1+...+r_n}}v_0^{\widetilde{r}-r_1-...-r_n}\\
&=\alpha_{\underline{i},\underline{q},\underline{r},j_0,s}v_{j_1}...v_{j_{\widetilde{r}}},
\end{align*}
and, consequently, the inequalities
\begin{align*}
0&\leq\dsum{{\substack{j_0+j_1+...+j_{r_1+...+r_n}\\=j+\sum_{\ell=1}^{n}r_\ell i_\ell-v_{\underline{i},\underline{q},\underline{r}}}}}{}{\alpha_{\underline{i},\underline{q},\underline{r},j_0,s}v_{j_1}...v_{j_{r_1+...+r_n}}}\\
&\leq\dsum{{\substack{j_0+j_1+...+j_{r_1+...+r_n}+0+...+0\\=j+\sum_{\ell=1}^{n}r_\ell i_\ell-v_{\underline{i},\underline{q},\underline{r}}}}}{}{\alpha_{\underline{i},\underline{q},\underline{r},j_0,s}v_{j_1}...v_{j_{\widetilde{r}}}}\\
&\leq\dsum{{\substack{j_0+j_1+...+j_{\widetilde{r}}\\=j+\sum_{\ell=1}^{n}r_\ell i_\ell-v_{\underline{i},\underline{q},\underline{r}}}}}{}{\alpha_{\underline{i},\underline{q},\underline{r},j_0,s}v_{j_1}...v_{j_{\widetilde{r}}}}
\end{align*}
hold, since all the terms are nonnegative.

Hence, the relations
\begin{multline*}
0\leq\dfrac{\MNN{\TC{u}{j+\kappa}}{(j+\kappa)\alpha_\sigma}{r}{s}}{m_0(j)\Gamma(1+\sigma(j+\kappa))}\leq g_{j,s}+\\
\dsum{n\in\mathcal I}{}{\dsum{(\underline{i},\underline{q},\underline{r})\in\Lambda_n}{}{\dsum{{\substack{j_0+j_1+...+j_{\widetilde{r}}\\=j+\sum_{\ell=1}^{n}r_\ell i_\ell-v_{\underline{i},\underline{q},\underline{r}}}}}{}{\alpha_{\underline{i},\underline{q},\underline{r},j_0,s}v_{j_1}...v_{j_{\widetilde{r}}}}}}=v_{j+\kappa}
\end{multline*}
which ends the proof of Proposition \ref{MS}.
\end{proof}

The following Proposition \ref{boundvj} allows us to bound the $v_j$'s.

\begin{proposition}\label{boundvj}
The formal series $v(X)$ is convergent. In particular, there exist two positive constants $C',K'>0$ such that $v_j\leq C'K'^j$ for all $j\geq0$.
\end{proposition}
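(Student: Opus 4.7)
The plan is to package the recursion (\ref{vj}) for the coefficients $v_j$ into a single functional equation for $v(X)$ and then invoke the analytic implicit function theorem. Concretely, I will introduce the auxiliary generating functions
$$
g(X)=\dsum{j\geq0}{}{g_{j,s}X^j}\quad\text{and}\quad a_{\underline{i},\underline{q},\underline{r}}(X)=\dsum{j\geq0}{}{\alpha_{\underline{i},\underline{q},\underline{r},j,s}X^j},
$$
both of which are holomorphic on some common disc $D_{\rho}$, $\rho>0$, thanks to the geometric majorations furnished by Lemma \ref{MajorantGjsAlphaiqrjs}.

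Next, I multiply the recurrence (\ref{vj}) by $X^{j+\kappa}$ and sum over $j\geq0$. Setting $M=j+\sum_{\ell=1}^{n}r_\ell i_\ell-v_{\underline{i},\underline{q},\underline{r}}$ inside each nonlinear term, so that $j+\kappa=M+\kappa+v_{\underline{i},\underline{q},\underline{r}}-\sum_{\ell=1}^{n}r_\ell i_\ell$, the inner convolution becomes $a_{\underline{i},\underline{q},\underline{r}}(X)\, v(X)^{\widetilde r}$ and there emerges the overall factor $X^{e_{\underline{i},\underline{q},\underline{r}}}$, with exponent
$$
e_{\underline{i},\underline{q},\underline{r}}=\kappa+v_{\underline{i},\underline{q},\underline{r}}-\dsum{\ell=1}{n}{r_\ell i_\ell}.
$$
By Assumption \ref{ASS1} one has $e_{\underline{i},\underline{q},\underline{r}}\geq1$, so every nonlinear block vanishes at $X=0$. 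After collecting the initial conditions into a polynomial $P(X)$ of degree $<\kappa$ (with $P(0)=v_0$), the recursion is equivalent to the functional equation
$$
v(X)=P(X)+X^\kappa g(X)+\dsum{n\in\mathcal I}{}{\dsum{(\underline{i},\underline{q},\underline{r})\in\Lambda_n}{}{X^{e_{\underline{i},\underline{q},\underline{r}}}\,a_{\underline{i},\underline{q},\underline{r}}(X)\,v(X)^{\widetilde r}}}.
$$

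I then view this identity as $F(X,v(X))=0$, where
$$
F(X,v)=-v+P(X)+X^\kappa g(X)+\dsum{n\in\mathcal I}{}{\dsum{(\underline{i},\underline{q},\underline{r})\in\Lambda_n}{}{X^{e_{\underline{i},\underline{q},\underline{r}}}a_{\underline{i},\underline{q},\underline{r}}(X)v^{\widetilde r}}}
$$
is holomorphic in a neighborhood of $(X,v)=(0,v_0)$. Since $e_{\underline{i},\underline{q},\underline{r}}\geq1$ makes the sum vanish at $X=0$, we obtain $F(0,v_0)=-v_0+P(0)=0$ and $\partial_vF(0,v_0)=-1\neq 0$. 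The analytic implicit function theorem therefore provides a unique function $V(X)$ holomorphic near $X=0$, with $V(0)=v_0$, solving $F(X,V(X))=0$. Comparing coefficients on both sides of this equation reproduces exactly the recursion (\ref{vj}) (together with the prescribed initial conditions), and formal uniqueness forces $V(X)=v(X)$. Cauchy's inequalities applied to $V$ on any closed disc of radius $R'$ strictly smaller than its radius of convergence then yield $v_j\leq C'K'^j$ with $K'=1/R'$.

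The main subtlety is purely bookkeeping: one must check that the double sum over $j$ and the convolution indices can be interchanged (which is legitimate because all coefficients are nonnegative), and that Assumption \ref{ASS1} is really needed precisely to guarantee $e_{\underline{i},\underline{q},\underline{r}}\geq1$, so that the nonlinearity contributes nothing at $X=0$ and the implicit function theorem applies. Everything else is a direct consequence of Lemma \ref{MajorantGjsAlphaiqrjs}.
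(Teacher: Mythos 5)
Your argument is correct, and it follows the paper's proof up to and including the key structural step: both reduce the recursion (\ref{vj}) to a single functional equation of the form $v(X)=(\text{holomorphic data})+(\text{holomorphic data})\cdot v(X)^{\widetilde r}$, with Assumption \ref{ASS1} guaranteeing that the nonlinear block carries a factor $X^{e_{\underline{i},\underline{q},\underline{r}}}$ with $e_{\underline{i},\underline{q},\underline{r}}\geq1$, and both rely on Lemma \ref{MajorantGjsAlphaiqrjs} for the holomorphy of the data. Where you diverge is in how convergence of the formal solution is extracted. The paper runs an explicit fixed-point iteration $V(X)=\sum_{\mu\geq0}V_\mu(X)$, computes $V_\mu(X)=\widetilde{C}_{\mu,\widetilde r}X^\mu\alpha(X)^\mu h(X)^{(\widetilde r-1)\mu+1}$ with generalized Catalan numbers $\widetilde{C}_{\mu,\widetilde r}\leq 2^{\widetilde r\mu}$, and sums a geometric series; this is constructive and yields an explicit lower bound $\bigl(2^{\widetilde r}\alpha(r)h(r)^{\widetilde r-1}\bigr)^{-1}$ on the radius of convergence. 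You instead set $F(X,v)=-v+P(X)+X^\kappa g(X)+\sum X^{e}a(X)v^{\widetilde r}$, check $F(0,v_0)=0$ and $\partial_vF(0,v_0)=-1\neq0$, and invoke the holomorphic implicit function theorem, identifying the resulting analytic germ with $v(X)$ by uniqueness of the formal solution (which indeed holds: since $e_{\underline{i},\underline{q},\underline{r}}\geq1$ and $\kappa\geq1$, the coefficient of $X^M$ in the functional equation determines $v_M$ from $v_0,\dots,v_{M-1}$). This is shorter and cleaner, at the cost of quantitative information about the radius; Cauchy's inequalities then give $v_j\leq C'K'^j$ exactly as claimed (the $v_j$ being nonnegative, no absolute values are lost). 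The only point worth spelling out in a final write-up is the choice of the polynomial $P(X)$: its coefficients $P_0=A_0,\dots,P_{\kappa-1}=A_{\kappa-1}$ must be taken so that the coefficients of $X^0,\dots,X^{\kappa-1}$ of the functional equation reproduce the paper's initial conditions on $v_0,\dots,v_{\kappa-1}$ (which already include nonlinear contributions over the set $V_j$); this works out precisely because those contributions are the order-$j$ coefficients of the nonlinear block, but it deserves a sentence rather than being absorbed into ``collecting the initial conditions.''
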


\begin{proof}
It is sufficient to prove the convergence of $v(X)$.

First of all, let us start by observing that $v(X)$ is the unique formal power series in $X$ solution of the functional equation
\begin{equation}\label{func}
v(X)=X\alpha (X)(v(X))^{\widetilde{r}}+h(X),
\end{equation}
where $\alpha(X)$ and $h(X)$ are the two formal power series defined by
\begin{align*}
\alpha(X)&=\dsum{n\in\mathcal I}{}{\dsum{(\underline{i},\underline{q},\underline{r})\in\Lambda_n}{}{X^{\kappa-\sum_{\ell=1}^{n}r_\ell i_\ell-1+v_{\underline{i},\underline{q},\underline{r}}}\alpha_{\underline{i},\underline{q},\underline{r},s}(X)}}\text{ and}\\
h(X)&=A_0+A_1 X+...+A_{\kappa-1}X^{\kappa-1}+X^{\kappa}\displaystyle\sum_{j\geq0}g_{j,s}X^j
\end{align*}
with
\begin{align*}
&\alpha_{\underline{i},\underline{q},\underline{r},s}(X)=\sum_{j\geq0}\alpha_{\underline{i},\underline{q},\underline{r},j,s}X^j,\\
&A_0=1+\dfrac{\MNN{\varphi_0}{0}{r}{s}}{m_0(0)}\text{, and}\\
&A_j=\dfrac{\MNN{\varphi_j}{j\alpha_\sigma}{r}{s}}{m_0(j)\Gamma(1+\sigma j)}\text{for $j=1,...,\kappa-1$ (if $\kappa\geq2$).}
\end{align*}
Observe that we have $\kappa-\sum_{\ell=1}^{n}r_\ell i_\ell-1+v_{\underline{i},\underline{q},\underline{r}}\geq0$ from Assumption \ref{ASS1}.

From Lemma \ref{MajorantGjsAlphaiqrjs} it follows that both $\alpha(X)$ and $h(X)$ are convergent power series with nonnegative coefficients, with radii of convergence $r_\alpha$ and $r_h$, respectively. It follows then that they both define increasing functions within their respective regions of convergence. Moreover, seeing as $\TC{a}{\underline{i},\underline{q},\underline{r};0}(x)\not\equiv 0$ and $A_0\geq1$, we have $\alpha(r)>0$ and $h(r)>0$ for all $r\in]0,r_\alpha[$ and $r\in]0,r_h[$ respectively.

To determine that $v(X)$ is convergent, the fixed point method will be used. Let us define a formal power series $V(X)=\dsum{\mu\ge 0}{}{V_\mu(X)}$ and let us choose the solution of the functional equation (\ref{func}) given by the system
$$
\begin{cases}
 V_0(X)=h(X)\\
 V_{\mu+1}(X)=X\alpha(X)\dsum{\mu_1+\ldots+\mu_{\widetilde{r}}=\mu}{}{V_{\mu_1}(X)\dots V_{\mu_{\widetilde{r}}}(X)}\quad\textrm{ for }\mu\ge 0.
\end{cases}
$$
By inductive reasoning on $\mu\geq0$, we establish that
$$
V_\mu(x)=\widetilde{C}_{\mu,\widetilde{r}}X^\mu\alpha(X)^\mu h(X)^{(\widetilde{r}-1)\mu+1}
$$
with 
$$
\widetilde{C}_{\mu+1,\widetilde{r}}=\dsum{\mu_1+\ldots+\mu_{\widetilde{r}}=\mu}{×}{\widetilde{C}_{\mu_1,\widetilde{r}}\dots\widetilde{C}_{\mu_{\widetilde{r}},\widetilde{r}}}
$$
for every $\mu>0$ and $\widetilde{C}_{0,\widetilde{r}}=1$.

Directly from this representation, it follows from the analyticity of $\alpha(X)$ and $h(X)$ that all the $V_\mu(X)$ define analytic functions on the disc with center $0\in\C$ and radius $\min\{r_\alpha,r_h\}$). Moreover, for all $\mu\geq0$, the function $V_\mu(X)$ is of order $X^\mu$. Hence, the series $V(X)$ makes sense as a formal power series in $X$, and we obtain $V(X)=v(X)$ by unicity.

To conclude the proof, it remains to show that $V(X)$ is convergent. To do that, let us fix $0<r<\min\{r_\alpha,r_h\}$. Then, for all $\mu\ge 0$ and for $|X|\le r$ we receive
$$
|V_\mu(X)|\le \widetilde{C}_{\mu,\widetilde{r}} |X|^\mu\alpha(r)^{\mu}h(r)^{(\widetilde{r}-1)\mu+1}.
$$
Moreover, notice that, since $\widetilde{C}_{\mu,\widetilde{r}}$ are generalized Catalan numbers\footnote{These numbers were named in honor of the mathematician Eug\`{e}ne Charles Catalan (1814-1894). They appear in many probabilist, graphs and combinatorial problems. For example, they can be seen as the number of $(p+1)$-ary trees with $j$ source-nodes, or as the number of ways of associating $j$ applications of a given $(p+1)$-ary operation, or as the number of ways of subdividing a convex polygon into $j$ disjoint ($p+2$)-gons by means of non-intersecting diagonals. They also appear in theoretical computers through the generalized Dyck words. See for instance \cite{HP91} and the references inside.} (see for instance \cite{HP91,Kl70,PoSz54}), we have the bound $\widetilde{C}_{\mu,\widetilde{r}}\le 2^{\widetilde{r}\mu}$ for all $\mu\geq0$. Hence,
$$
|V_\mu(X)|\le h(r)\left(2^{\widetilde{r}} \alpha(r)h(r)^{(\widetilde{r}-1)}|X|\right)^\mu,
$$
and the series $V(X)$ is normally convergent on any disc with center $0\in\C$ and radius
$$0<r'<\min\left(r,\frac{1}{2^{\widetilde{r}} \alpha(r)h(r)^{(\widetilde{r}-1)}}\right).$$
From this, it follows that $V(X)$ is analytic at $0\in\C$, which achieves the proof of Proposition \ref{boundvj}.
\end{proof}

According to Propositions \ref{MS} and \ref{boundvj}, we can now bound the modified Nagumo norms $\MNN{\TC{u}{j}}{j\alpha_\sigma}{r}{s}$.

\begin{corollary}\label{majNN}
Let $C',K'>0$ be as in Proposition \ref{boundvj}. Then, the following inequality holds for all $j\geq0$:
$$\MNN{\TC{u}{j}}{j\alpha_\sigma}{r}{s}\leq C'K'^jm_0(j)\Gamma(1+\sigma j).$$
\end{corollary}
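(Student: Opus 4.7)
The plan is short: Corollary \ref{majNN} is an immediate payoff of the two propositions that directly precede it. Proposition \ref{MS} supplies the majorant comparison
$$\dfrac{\MNN{\TC{u}{j}}{j\alpha_\sigma}{r}{s}}{m_0(j)\Gamma(1+\sigma j)}\leq v_j$$
for every $j\geq0$, where $v(X)=\sum v_j X^j$ is the auxiliary scalar series solving the functional equation $v(X)=X\alpha(X)v(X)^{\widetilde{r}}+h(X)$. Proposition \ref{boundvj} then upgrades this to an exponential bound $v_j\leq C'K'^j$, coming from the analyticity of $v(X)$ at the origin (established via the Catalan-number fixed-point argument).

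Chaining the two inequalities and multiplying through by $m_0(j)\Gamma(1+\sigma j)$ produces
$$\MNN{\TC{u}{j}}{j\alpha_\sigma}{r}{s}\leq C'K'^j\, m_0(j)\Gamma(1+\sigma j),$$
which is exactly the claimed estimate. There is no independent obstacle in this step; all the technical difficulty was absorbed earlier, notably in the choice of the multi-index $\alpha_\sigma$ (calibrated via Proposition \ref{IneqMomentNewtonPolygon} so that $\alpha'_{\sigma,d}(j_0)\geq1$ and Corollary \ref{NagumoNormProductWithOne} applies), in the sharp bookkeeping of the constants $C_1,\dots,C_5$ used to control the ratios of $m_0$ and $\Gamma$ factors, and in the fixed-point argument for $v(X)$.

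It is worth noting the role this corollary plays downstream: combining the Nagumo-norm bound with Proposition \ref{prop:sup_norm} yields a sup-norm estimate of the form $\m{\TC{u}{j}(x)}\leq C''K''^{j}\,m_0(j)\Gamma(1+\sigma j)$ on a slightly shrunken polydisc $D_{\rho,\ldots,\rho}$. Since $\widetilde{u}(t,x)$ was written in the moment-normalized form $\sum \TC{u}{j}(x)\,t^j/m_0(j)$, the coefficient of $t^j$ in that expansion is $\TC{u}{j}(x)/m_0(j)$, so the factor $m_0(j)$ cancels and we land on precisely the definition of $\sigma$-Gevrey regularity from Definition \ref{defi:gevrey-order}. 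In this sense, Corollary \ref{majNN} is the concrete quantitative statement which, once supplemented by the passage from modified Nagumo norms to sup norms, closes the first point of Theorem \ref{GevreyRegularityTheorem}.
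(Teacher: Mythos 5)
Your proof is correct and matches the paper exactly: the corollary is stated there without a separate proof precisely because it is the immediate chaining of Proposition \ref{MS} (the majorant comparison with $v_j$) and Proposition \ref{boundvj} (the geometric bound $v_j\leq C'K'^j$), which is what you do. Your closing remarks on the downstream use via Proposition \ref{prop:sup_norm} also agree with the paper's fourth step.
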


We are now able to conclude the proof of the first point of Theorem \ref{GevreyRegularityTheorem}.

\subsubsection{Fourth step: conclusion}
We must prove that the sup-norm of the $\TC{u}{j}(x)$ has estimates similar to the ones on the norms $\MNN{\TC{u}{j}}{j\alpha_\sigma}{r}{s}$ (see Corollary \ref{majNN}). To this end, we proceed by shrinking the polydisc $D_{r,...,r}$. Let us choose $0<\rho<r$ and let us apply Proposition \ref{prop:sup_norm}: there exists a positive constant $A>0$ such that the following inequality holds for all $j\geq0$ and all $x\in D_{\rho,...,\rho}$:
$$\m{\TC{u}{j}(x)}\leq A^{\lambda(j\alpha_\sigma)}\MNN{\TC{u}{j}}{j\alpha_\sigma}{r}{s}.$$
Observing then that $\lambda(j\alpha_\sigma)=j\lambda(\alpha_\sigma)$, we finally deduce from Corollary \ref{majNN} that
$$\m{\TC{u}{j}(x)}\leq C' (K'A^{\lambda(\alpha_\sigma)})^jm_0(j)\Gamma(1+\sigma j)$$
for all $x\in D_{\rho,...,\rho}$ and all $j\geq0$., which ends the proof of the first point of Theorem \ref{GevreyRegularityTheorem}.

To conclude the proof of Theorem \ref{GevreyRegularityTheorem}, it remains to show that its second point also holds.

\subsection{Proof of the second point of Theorem \ref{GevreyRegularityTheorem}}\label{SecondPointProof}

In this section, we assume $\mathcal S\neq\emptyset$ and we fix $0\leq\sigma<\sigma_c$ (of course, this case does not occur when $\mathcal S=\emptyset$). 

According to the filtration of the $\sigma$-Gevrey spaces $\DomGn{s}$ (see (\ref{FiltrationGevreySpaces})) and the first point of Theorem \ref{GevreyRegularityTheorem}, it is clear that we have the following implications:
\begin{align*}
\widetilde{f}(t,x)\in\DomGn{\sigma}&\Rightarrow\widetilde{f}(t,x)\in\DomGn{\sigma_c}\\
&\Rightarrow\widetilde{u}(t,x)\in\DomGn{\sigma_c}.
\end{align*}

Therefore, to conclude that we can not say better about the Gevrey order of $\widetilde{u}(t,x)$, that is $\widetilde{u}(t,x)$ is \textit{generically} $\sigma_c$-Gevrey, we need to find an example for which the formal solution $\widetilde{u}(t,x)$ of Eq. (\ref{EQ1}) is $\sigma'$-Gevrey for no $\sigma'<\sigma_c$. In Proposition \ref{ex} below, we propose a much more general example.

Before stating this, let us begin by introducing an interesting auxiliary function. Since the functions $m_1,\ldots,m_N$ are regular moment functions of respective orders $s_1,\ldots,s_N\geq1$, there exist positive constants $a_1,...,a_N>0$ such that
$$\dfrac{m_d(j+1)}{m_d(j)}\geq a_d(j+1)^{s_d}\ \textrm{ for all }j\ge 0,$$
with $d=1,\ldots,N$.

\begin{lemma}\label{ExFunction}
The function
$$\mathcal E_{m}(x)=\dprod{d=1}{N}{\left(\dsum{j_d\geq0}{}{a_d^{j_d}j_d!^{s_d}\dfrac{x_d^{j_d}}}{m_d(j_d)}\right)}$$
defines an analytic function on the polydisc $D_{1,...,1}$.
\end{lemma}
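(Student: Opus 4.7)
The plan is straightforward: extract a lower bound on $m_d(j)$ from the regularity hypothesis, use it to bound the coefficients of each univariate factor by $1$, and then conclude convergence on $D_1$ by comparison with a geometric series. Analyticity of a finite product on a polydisc follows immediately.

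More concretely, I would first fix $d\in\{1,\ldots,N\}$ and iterate the regularity inequality
$$\frac{m_d(j+1)}{m_d(j)} \geq a_d(j+1)^{s_d}$$
starting from $j=0$. Together with the standing normalization $m_d(0)=1$, a telescoping product gives $m_d(j) \geq a_d^j (j!)^{s_d}$ for every $j\geq 0$. Consequently every coefficient of the $d$-th factor satisfies
$$0 \leq \frac{a_d^j\, j!^{s_d}}{m_d(j)} \leq 1.$$

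Second, I would deduce that the power series $\sum_{j\geq 0} a_d^j j!^{s_d} x_d^j / m_d(j)$ is dominated term-by-term by the geometric series $\sum_{j\geq 0} x_d^j$, hence converges absolutely and locally uniformly on the open unit disc $D_1\subset\C$, defining an analytic function there. A finite product of analytic functions in separate variables is analytic on the corresponding polydisc, so $\mathcal E_m(x)$ is analytic on $D_{1,\ldots,1}$.

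There is no genuine obstacle here; the only thing one has to be careful about is that the lower bound on $m_d(j)$ built out of the iterated ratio inequality is sharp enough to cancel the factorial $j!^{s_d}$ exactly, which is why the radius of convergence is precisely $1$ (not smaller). This sharpness is already built into the hypothesis via the factor $a_d$, so the argument goes through cleanly.
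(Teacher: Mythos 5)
Your proof is correct and follows essentially the same route as the paper: both arguments rest on the single regularity inequality $m_d(j+1)/m_d(j)\geq a_d(j+1)^{s_d}$, which the paper feeds directly into d'Alembert's ratio test while you telescope it (using $m_d(0)=1$) to bound each coefficient by $1$ and compare with the geometric series. The two presentations are interchangeable, and your telescoped version is, if anything, slightly more explicit about why the radius is exactly $1$.
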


\begin{proof}
Setting $\alpha_{j_d}(x_d)= a_d^{j_d}j_d!^{s_d}\dfrac{j_d^{j_d}}{m_d(j_d)}$ for all $d=1,...,N$ and all $j_d\geq0$, we get
$$\m{\dfrac{\alpha_{j_d+1}(x_d)}{\alpha_{j_d}(x_d)}}=a_d(j_d+1)^{s_d}\dfrac{m_d(j_d)}{m_d(j_d+1)}\m{x_d}\leq\m{x_d}$$
and the result follows from the d'Alembert's Rule since the series $\dsum{j\geq0}{}{\m{x}^{j}}$ converges for all $\m{x}<1$.
\end{proof}

\begin{proposition}\label{ex}
Let us consider the equation
\begin{equation}\label{exEq}
\begin{cases}
\partial_{m_0;t}^\kappa u - \dsum{n\in\mathcal I}{}{\dsum{(\underline{i},\underline{q},\underline{r})\in\Lambda_n}{}{t^{v_{\underline{i},\underline{q},\underline{r}}}a_{\underline{i},\underline{q},\underline{r}}\left(\partial_{m_0;t}^{i_1}\partial_{m;x}^{q_1}u\right)^{r_1}...\left(\partial_{m_0;t}^{i_n}\partial_{m;x}^{q_n}u\right)^{r_n}}}=\widetilde{f}(t,x)\\
\partial_{m_0;t}^ju(t,x)|_{t=0}=\varphi_j(x),\ j=0,...,\kappa-1
\end{cases}
\end{equation}
where
\begin{itemize}
\item the coefficients $a_{\underline{i},\underline{q},\underline{r}}$ are positive real numbers for all $(\underline{i},\underline{q},\underline{r})\in\Lambda_n$ and all $n\in\mathcal I$;
\item$i_\ell^*=0$ and $q^*_\ell=(0,...,0)$ for all $\ell\in\{1,...,n^*-1\}$;
\item$r^*_{n^*}=1$;
\item the initial condition $\varphi_{i^*_{n^*}}(x)$ is the analytic function $\mathcal E_m(x)$ on the disc $D_{1,...1}$ defined in Lemma \ref{ExFunction};
\item the initial conditions $\varphi_j(x)$ for $j\neq i^*_{n^*}$ are analytic functions on $D_{1,...,1}$ satisfying $\partial_{m;x}^{\ell}\varphi_j(0)>0$ for all $\ell\in\mathbb N^N$.
\end{itemize} Suppose also that the inhomogeneity $\widetilde{f}(t,x)$ satisfies the following conditions:
\begin{itemize}
\item$\widetilde{f}(t,x)$ is $\sigma$-Gevrey;
\item$\partial_{m;x}^{\ell}\TC{f}{j}(0)\geq0$ for all $j\geq0$ and all $\ell\in\mathbb N^N$.
\end{itemize}
Then, the formal solution $\widetilde{u}(t,x)$ of Eq. (\ref{exEq}) is exactly $\sigma_c$-Gevrey.
\end{proposition}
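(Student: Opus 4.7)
The plan splits the claim into an upper and a lower Gevrey bound. The upper bound is immediate: since $\sigma<\sigma_c$, the filtration \eqref{FiltrationGevreySpaces} places $\widetilde{f}(t,x)\in\DomGn{\sigma_c}$, and the first point of Theorem \ref{GevreyRegularityTheorem} then forces $\widetilde{u}(t,x)\in\DomGn{\sigma_c}$. All the work therefore lies in proving the complementary lower bound, namely $\widetilde{u}(t,x)\notin\DomGn{\sigma'}$ for any $\sigma'<\sigma_c$.

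My first preparatory step would be a positivity lemma, proved by induction on $j\ge 0$: every moment Taylor coefficient at the origin is nonnegative, i.e.\ $\partial_{m;x}^{\ell}\TC{u}{j}(0)\ge 0$ for all $\ell\in\Na^N$. This is where every sign hypothesis of the example enters -- positivity of the constants $a_{\underline{i},\underline{q},\underline{r}}$, of the moment Taylor coefficients of $\widetilde{f}$ at $0$, and of $\partial_{m;x}^{\ell}\mathcal{E}_m(0)=\prod_d a_d^{\ell_d}\ell_d!^{s_d}\ge 0$ coming from Lemma \ref{ExFunction}. The recursion \eqref{uj}--\eqref{Cjn}, together with the observation that the pointwise product of two formal moment series with nonnegative moment Taylor coefficients at $0$ retains that property, propagates the inequality to all $j$.

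Next I would iterate the recursion by keeping only the edge tuple $(\underline{i}^*,\underline{q}^*,\underline{r}^*)$ with the concentrated multi-index $(j_0,\ldots,j_{R^*+1})=(0,\ldots,0,N-\kappa-v^*)$, where $R^*=r^*_1+\cdots+r^*_{n^*-1}$ and $v^*=v_{\underline{i}^*,\underline{q}^*,\underline{r}^*}$. Since the discarded terms are $\ge 0$ by the positivity step, and since at each iteration one may apply $\partial_{m;x}^{q^*_{n^*}}$ to the recursion and again single out the term that concentrates the new derivative on the factor $\partial_{m;x}^{q^*_{n^*}} u$, after $M=(N-i^*_{n^*})/D$ iterations (with $D:=\kappa+v^*-i^*_{n^*}$) the process terminates at the initial condition $\varphi_{i^*_{n^*}}=\mathcal{E}_m$, producing
\begin{equation*}
\TC{u}{N}(0)\ \ge\ C^{M}\left(\prod_{k=0}^{M-1}\frac{m_0(N-kD-\kappa)}{m_0(N-kD-\kappa-v^*)}\right)\partial_{m;x}^{Mq^*_{n^*}}\mathcal{E}_m(0).
\end{equation*}

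The last step is asymptotic bookkeeping. Lemma \ref{ExFunction} yields $\partial_{m;x}^{Mq^*_{n^*}}\mathcal{E}_m(0)=\prod_d a_d^{Mq^*_{n^*,d}}(Mq^*_{n^*,d})!^{s_d}$, which by Stirling matches $\Gamma(1+M\lambda(sq^*_{n^*}))$ up to an exponential factor, while the $m_0$-ratios, via \eqref{IneqMomentFunction} and regularity, telescope to $\Gamma(1+s_0v^*M)$-type growth. Dividing by $m_0(N)\sim\Gamma(1+s_0N)$ to pass to the $t$-Taylor coefficient of $\widetilde{u}$ and using $MD\sim N$ produces a lower bound of the form $C_1 C_2^{N}\Gamma(1+\sigma_c N)$; the key algebraic cancellation is
\begin{equation*}
\frac{s_0v^*+\lambda(sq^*_{n^*})-s_0D}{D}=\frac{\lambda(sq^*_{n^*})+s_0i^*_{n^*}-s_0\kappa}{\kappa-i^*_{n^*}+v^*}=\sigma_c,
\end{equation*}
which rules out Gevrey order $<\sigma_c$. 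The main obstacle will be this asymptotic bookkeeping of iterated $m_0$-ratios together with the verification that evaluating the moment Leibniz expansion at $x=0$ produces a nonnegative sum from which the tracked term can legitimately be isolated at every step.
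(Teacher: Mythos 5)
Your proposal is correct and follows essentially the same route as the paper's proof: the upper bound from the first point of Theorem \ref{GevreyRegularityTheorem}, a positivity induction justifying the discarding of all but the edge term $(\underline{i}^*,\underline{q}^*,\underline{r}^*)$ with concentrated indices, the iteration down to $\varphi_{i^*_{n^*}}=\mathcal E_m$ along the subsequence $N=MD+i^*_{n^*}$, and the Stirling comparison that reduces to the identity $(s_0v^*+\lambda(sq^*_{n^*})-s_0D)/D=\sigma_c$. The only cosmetic differences are that the paper phrases the conclusion as a contradiction with inequality (\ref{equiEx}) and handles the product of $m_0$-ratios via Lemma \ref{TechnicalLemmaEx} rather than directly by Stirling.
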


\begin{remark}
Due to our assumptions, Eq. (\ref{exEq}) is reduced to a nonlinear equation of the form
\begin{equation*}
\begin{cases}
\partial_{m_0;t}^\kappa u - \dsum{i\in\mathcal K}{}{\dsum{q\in Q_i}{}{\left(\dsum{r\in P_{i,q}}{}{a_{i,q,r}t^{v_{i,q,r}}u^r}\right)\partial_{m_0;t}^i\partial_{m;x}^qu}}=\widetilde{f}(t,x)\\
\partial_{m_0;t}^ju(t,x)|_{t=0}=\varphi_j(x),\ j=0,...,\kappa-1
\end{cases}
\end{equation*}
where
\begin{itemize}
\item$\mathcal K$ is a nonempty subset of $\{0,...,\kappa-1\}$;
\item$Q_i$ is a nonempty finite subset of $\Na^N$ for all $i\in\mathcal K$;
\item$P_{i,q}$ is a nonempty finite subset of $\Na$ for all $i\in\mathcal K$ and all $q\in Q_i$.
\end{itemize}
However, for the sake of clarity, we retain the notations used throughout this article and will not use this simpler form. Observe in particular that we have
$$\sigma_c=\dfrac{s_0 i^*_{n^*}+\lambda(sq^*_{n^*})-s_0\kappa}{\kappa+v_{\underline{i}^*,\underline{q}^*,\underline{r}^*}-i^*_{n^*}}.$$
\end{remark}

\begin{proof}
Due to the calculations above, it is sufficient to prove that $\widetilde{u}(t,x)$ is $\sigma'$-Gevrey for no $\sigma'<\sigma_c$.

First of all, let us rewrite the general relations (\ref{uj}) as the identities
\begin{multline*}
\TC{u}{j+\kappa}(x)=A_{\underline{i}^*,\underline{q}^*,\underline{r}^*}(x)\frac{m_0(j)}{m_0(j-v_{\underline{i}^*,\underline{q}^*,\underline{r}^*})}\partial_{m;x}^{q^*_{n^*}}\TC{u}{j-v_{\underline{i}^*,\underline{q}^*,\underline{r}^*}+i^*_{n^*}}(x)\\+R_j(x)\end{multline*}
with
$$A_{\underline{i}^*,\underline{q}^*,\underline{r}^*}(x)=a_{\underline{i}^*,\underline{q}^*,\underline{r}^*}\dprod{\ell=1}{n^*-1}{\left(\TC{u}{0}(x)\right)^{r^*_\ell}}$$
and
\begin{multline*}
R_j(x)=\TC{f}{j}(x)\\+
\dsum{\substack{j_1+...+j_{r^*_1+...+r_{n^*}^*}=j-v_{\underline{i^*},\underline{q^*},\underline{r^*}}\\(j_1,...,j_{r^*_1+...+r_{n^*}^*})\neq(0,...,0,j-v_{\underline{i^*},\underline{q^*},\underline{r^*}})}}{}{C_{\underline{i^*},\underline{q^*},\underline{r^*},\underline{j},n^*}(x)}\\+
\dsum{\substack{(\underline{i},\underline{q},\underline{r})\in\bigcup_{n\in\mathcal I}\Lambda_n\\(n,\underline{i},\underline{q},\underline{r})\neq(n^*,\underline{i^*},\underline{q^*},\underline{r^*})}}{}{\dsum{{j_0+j_1+...+j_{r_1+...+r_n}=j-v_{\underline{i},\underline{q},\underline{r}}}}{}{C_{\underline{i},\underline{q},\underline{r},\underline{j},n}(x)}}
\end{multline*}
for all $j\geq0$, together with the initial conditions $\TC{u}{j}(x)=\varphi_j(x)$ for $j=0,...,\kappa-~1$. Using then our hypotheses on the coefficients $a_{\underline{i},\underline{q},\underline{r}}$, on the initial conditions $\varphi_j(x)$, and on the inhomogeneity $\widetilde{f}(t,x)$, we easily check that, for all $j\geq0$:
\begin{multline*}
\TC{u}{j(v_{\underline{i}^*,\underline{q}^*,\underline{r}^*}+\kappa-i_{n^*}^*)+i^*_{n^*}}(x)=
\left(A_{\underline{i}^*,\underline{q}^*,\underline{r}^*}(x)\right)^j\partial_{m;x}^{jq^*_{n^*}}\varphi_{i^*_{n^*}}(x)\times\\\prod_{k=0}^{j-1}\dfrac{m_0(k(v_{\underline{i}^*,\underline{q}^*,\underline{r}^*}+\kappa-i^*_{n^*})+v_{\underline{i}^*,\underline{q}^*,\underline{r}^*})}{m_0(k(v_{\underline{i}^*,\underline{q}^*,\underline{r}^*}+\kappa-i^*_{n^*}))}+\text{rem}_j(x)
\end{multline*}
with $A_{\underline{i}^*,\underline{q}^*,\underline{r}^*}(0)>0$ and $\text{rem}_j(0)\geq0$. Observe that from Lemma \ref{ExFunction}, we have
$$\partial_{m;x}^{jq^*_{n^*}}\varphi_{i^*_{n^*}}(x)=\dprod{d=1}{N}{\left(\dsum{j_d\geq0}{}{a_d^{j_d+jq^*_{n^*,d}}(j_d+jq^*_{n^*,d})!^{s_d}\dfrac{x_d^{j_d}}}{m_d(j_d)}\right)};$$
hence,
$$\partial_{m;x}^{jq^*_{n^*}}\varphi_{i^*_{n^*}}(0)=\dprod{d=1}{N}a_d^{jq^*_{n^*,d}}(jq^*_{n^*,d})!^{s_d}.$$
Observe also that, since $m_0$ is a regular moment fuction of order $s_0$, there exists a positive constant $a_0>0$ such that
\begin{multline*}
\dfrac{m_0(k(v_{\underline{i}^*,\underline{q}^*,\underline{r}^*}+\kappa-i^*_{n^*})+v_{\underline{i}^*,\underline{q}^*,\underline{r}^*})}{m_0(k(v_{\underline{i}^*,\underline{q}^*,\underline{r}^*}+\kappa-i^*_{n^*}))}\\\geq a_0^{v_{\underline{i}^*,\underline{q}^*,\underline{r}^*}}\left(\dprod{\ell=1}{v_{\underline{i}^*,\underline{q}^*,\underline{r}^*}}{\left(k(v_{\underline{i}^*,\underline{q}^*,\underline{r}^*}+\kappa-i^*_{n^*})+\ell\right)}\right)^{s_0};\end{multline*}
hence;
\begin{multline*}
\prod_{k=0}^{j-1}\dfrac{m_0(k(v_{\underline{i}^*,\underline{q}^*,\underline{r}^*}+\kappa-i^*_{n^*})+v_{\underline{i}^*,\underline{q}^*,\underline{r}^*})}{m_0(k(v_{\underline{i}^*,\underline{q}^*,\underline{r}^*}+\kappa-i^*_{n^*}))}\\\geq
a_0^{v_{\underline{i}^*,\underline{q}^*,\underline{r}^*}}\left(\prod_{k=0}^{j-1}\dprod{\ell=1}{v_{\underline{i}^*,\underline{q}^*,\underline{r}^*}}{\left(k(v_{\underline{i}^*,\underline{q}^*,\underline{r}^*}+\kappa-i^*_{n^*})+\ell\right)}\right)^{s_0}.
\end{multline*}
Now, let us notice that Lemma \ref{TechnicalLemmaEx} implies that
$$
\prod_{k=0}^{j-1}\dprod{\ell=1}{v_{\underline{i}^*,\underline{q}^*,\underline{r}^*}}{\left(k(v_{\underline{i}^*,\underline{q}^*,\underline{r}^*}+\kappa-i^*_{n^*})+\ell\right)}\ge(jv_{\underline{i}^*,\underline{q}^*,\underline{r}^*})!
$$
Applying then this last inequality, we deduce that there exist two positive constants $C,K>0$ such that
\begin{equation}\label{ineqEx}
\TC{u}{j(v_{\underline{i}^*,\underline{q}^*,\underline{r}^*}+\kappa-i_{n^*}^*)+i^*_{n^*}}(0)\geq CK^j(jv_{\underline{i}^*,\underline{q}^*,\underline{r}^*})!^{s_0}\dprod{d=1}{N}(jq^*_{n^*,d})!^{s_d}.
\end{equation}

Let us now suppose that $\widetilde{u}(t,x)$ is $\sigma'$-Gevrey for some $\sigma'<\sigma_c$. Then, Definition \ref{defi:gevrey-order}, properties of moment functions and inequality (\ref{ineqEx}) imply
\begin{equation}\label{equiEx}
1\leq C'K'^j\frac{\Gamma(1+(\sigma'+s_0)(j(v_{\underline{i}^*,\underline{q}^*,\underline{r}^*}+\kappa-i^*_{n^*})+i^*_{n^*}))}{(jv_{\underline{i}^*,\underline{q}^*,\underline{r}^*})!^{s_0}\dprod{d=1}{N}(jq^*_{n^*,d})!^{s_d}}
\end{equation}
for all $j\geq0$ and some convenient positive constants $C',K'>0$ independent of $j$. Proposition \ref{ex} follows, since such inequalities are impossible from the Stirling's Formula and from the definition of $\sigma_c$ (see Definition \ref{DefSigmac}). Indeed, this tells us that the right hand-side of (\ref{equiEx}) goes to $0$ when $j$ tends to infinity. This ends the proof.
\end{proof}

\begin{lemma}\label{TechnicalLemmaEx}
Let $j\geq1$ and $v\geq0$. Then, for every integer $a\geq0$:
$$\prod_{k=0}^{j-1}\dprod{\ell=1}{v}{\left(k(v+a)+\ell\right)}\geq(jv)!.$$
\end{lemma}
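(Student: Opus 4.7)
The plan is to reduce the inequality to the case $a=0$ by a termwise monotonicity argument, and then verify that the $a=0$ case gives exactly $(jv)!$ via telescoping.

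First I would observe that for every $k \geq 0$, every $\ell \in \{1,\ldots,v\}$ and every $a \geq 0$, we trivially have
\[
k(v+a) + \ell \;\geq\; kv + \ell,
\]
since $ka \geq 0$. Taking the product over $k = 0, \ldots, j-1$ and $\ell = 1, \ldots, v$, this yields
\[
\prod_{k=0}^{j-1}\prod_{\ell=1}^{v}\bigl(k(v+a)+\ell\bigr)
\;\geq\;
\prod_{k=0}^{j-1}\prod_{\ell=1}^{v}\bigl(kv+\ell\bigr).
\]
Hence it is enough to establish the identity $\prod_{k=0}^{j-1}\prod_{\ell=1}^{v}(kv+\ell) = (jv)!$, which corresponds to the boundary case $a=0$.

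For the telescoping step, I would note that for each fixed $k$,
\[
\prod_{\ell=1}^{v}(kv+\ell) \;=\; (kv+1)(kv+2)\cdots(kv+v) \;=\; \frac{\bigl((k+1)v\bigr)!}{(kv)!}.
\]
Multiplying these identities for $k=0,1,\ldots,j-1$, the intermediate factorials cancel, leaving
\[
\prod_{k=0}^{j-1}\prod_{\ell=1}^{v}(kv+\ell) \;=\; \prod_{k=0}^{j-1}\frac{\bigl((k+1)v\bigr)!}{(kv)!} \;=\; \frac{(jv)!}{0!} \;=\; (jv)!,
\]
which combined with the previous inequality concludes the proof.

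I do not expect any real obstacle here: the statement is purely combinatorial, and the only thing to notice is the clean telescoping in the $a=0$ case. The (trivial) edge cases $v=0$ (where both sides equal $1$, using the empty-product convention) and $j=1$ (where both sides equal $v!$) are covered by the same formula. No assumption beyond $a,v\in\mathbb{N}$ and $j\geq 1$ is used.
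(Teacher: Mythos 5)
Your proof is correct, and it takes a genuinely different route from the paper. The paper proceeds by induction on $j$: assuming the bound for $j$, it rewrites the new block $\prod_{\ell=1}^{v}(j(v+a)+\ell)$ as a ratio of factorials and then compares $\dbinom{(j+1)v+ja}{ja}$ with $\dbinom{jv+ja}{ja}$ (via the monotonicity of $\beta\mapsto\binom{\beta+\alpha}{\beta}$ established earlier in the paper) to absorb the result into $((j+1)v)!$. You instead dispose of the parameter $a$ at the outset by the termwise bound $k(v+a)+\ell\geq kv+\ell$ (legitimate, since all factors are positive), and then observe that the $a=0$ case is not merely an inequality but the exact telescoping identity $\prod_{k=0}^{j-1}\prod_{\ell=1}^{v}(kv+\ell)=(jv)!$. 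Your argument is shorter, avoids induction and any appeal to binomial-coefficient monotonicity, and makes explicit that the stated bound is sharp exactly at $a=0$; the paper's inductive computation, on the other hand, is the one that mirrors how the product actually arises in the proof of Proposition \ref{ex}. Your handling of the edge cases $v=0$ and $j=1$ is also fine.
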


\begin{proof}
The relation being obvious when $v=0$ (the second product is $1$), we assume $v\geq1$ and we proceed by induction on $j$.

The inequality is clear for $j=1$. Let us now suppose that it holds for a certain $j\geq1$. Then,
\begin{align*}
\prod_{k=0}^{j}\dprod{\ell=1}{v}{\left(k(v+a)+\ell\right)}&\geq(jv)!\dprod{\ell=1}{v}{\left(j(v+a)+\ell\right)}\\
&=\dfrac{((j+1)v+ja)!}{(jv+ja)!}(jv)!\\
&=\dfrac{\dbinom{(j+1)v+ja}{ja}}{\dbinom{jv+ja}{ja}}((j+1)v)!
\end{align*}
and the result follows since $\dbinom{(j+1)v+ja}{ja}\geq\dbinom{jv+ja}{ja}$.
\end{proof}

\subsection{Remark on the Cauchy-Kovalevskaya Theorem and directions for further research}
When the moment functions $m_0,m_1,...,m_N$ are chosen so that $m_0(\lambda)=m_1(\lambda)...=m_N(\lambda)=\Gamma(1+\lambda)$, Eq. (\ref{EQ1}) is reduced to a classical inhomogeneous nonlinear partial differential equation. In particular, our main Theorem \ref{GevreyRegularityTheorem} allows to study the Gevrey regularity of its formal power series solution, including the non-Kovalevskaya case.

However, in the Kovalevskaya case, it is important to note here that our result is weaker than the Cauchy-Kovalevskaya Theorem. Let us consider for instance the partial differential equation
\begin{equation}\label{EQExKC}
\begin{cases}
\partial_t^3u+\partial_t\partial_x u+(\partial_x^2u)^3=0\\
\partial_t^ju(t,x)|_{t=0}=\varphi_j(x),\  j=0,1,2
\end{cases}.
\end{equation}
in two variables $(t,x)\in\C^2$. Then, the Cauchy-Kovalevskaya Theorem tells us that the formal solution $\widetilde{u}(t,x)$ defines an analytic function at the origin of $\C^2$, whereas our Theorem \ref{GevreyRegularityTheorem} tells us that $\widetilde{u}(t,x)$ is $1$-Gevrey. This is not contradictory, of course, but our result is clearly weaker.

This is probably due to the choice of our Newton polygon and the calculation method we used. So, as directions for future research, it seems interesting to improve our result on the Gevrey order of the formal solution of Eq. (\ref{EQ1}).

\bibliographystyle{plain}
\bibliography{nonlinear_moment_PDE_V11}

\end{document}